\def\sideremark#1{\ifvmode\leavevmode\fi\vadjust{\vbox to0pt{\vss
 \hbox to 0pt{\hskip\hsize\hskip1em
\vbox{\hsize2cm\small\raggedright\pretolerance10000
 \noindent #1\hfill}\hss}\vbox to8pt{\vfil}\vss}}}
\setlist[itemize]{leftmargin=*}
\DeclareMathAlphabet{\mathpzc}{OT1}{pzc}{m}{it}
\DeclareMathOperator{\dist}{dist}
\newcommand{\dx}{\;\mathrm{d}x}
\newcommand{\dxp}{\;\mathrm{d}x'}
\newcommand{\R}{\mathbb{R}}
\newcommand{\G}{\mathcal{G}}
\def\endproof{\hspace*{\fill}\mbox{\ \rule{.1in}{.1in}}\medskip }
\renewcommand{\epsilon}{\varepsilon}
\numberwithin{equation}{section}
\theoremstyle{plain}
\newtheorem{theorem}{Theorem}[section]
\newtheorem{lemma}[theorem]{Lemma}
\newtheorem{corollary}[theorem]{Corollary}
\newtheorem{proposition}[theorem]{Proposition}
\theoremstyle{definition}
\begin{document}
\title[Dimension reduction for thin films prestrained by shallow curvature]
{Dimension reduction for thin films prestrained by shallow curvature} 
\author{Silvia Jimenez-Bolanos and Marta Lewicka}
\address{S. Jimenez-Bolanos: Colgate University,  Department of Mathematics,
13 Oak Drive, Hamilton, NY 13346, USA}  
\address{M. Lewicka: University of Pittsburgh, Department of
  Mathematics, 139 University Place, Pittsburgh, PA 15260, USA} 
\email{sjimenez@colgate.edu, lewicka@pitt.edu} 

\begin{abstract}
We are concerned with the dimension reduction analysis for thin
three-dimensional elastic films, prestrained via Riemannian metrics 
with weak curvatures. 

For the prestrain inducing the incompatible version of the F\"oppl-von K\'arm\'an equations,
we find the $\Gamma$-limits of the rescaled energies, identify the
optimal energy scaling laws, and display the equivalent conditions for
optimality in terms of both the prestrain components and the
curvatures of the related Riemannian metrics. When the stretching-inducing
prestrain carries no in-plane modes, we
discover similarities with the previously described shallow shell models.  

In higher prestrain regimes, we prove new energy upper
  bounds by constructing deformations as the Kirchhoff-Love
  extensions of the highly perturbative, H\"older-regular solutions to
  the  Monge-Ampere equation obtained by means of convex integration.
\end{abstract}

\date{\today}
\maketitle

\section{Introduction}

This paper is concerned with the dimension reduction analysis for
prestrained thin three-dimensional elastic films. We assume that the
prestrain corresponds to a family of Riemannian metrics 
with weak curvatures i.e. metrics deviating from the Euclidean metric by
the order of power of the film's thickness.
In various regimes of these scaling powers (separately for the
stretching and bending-inducing prestrain), we complete the dimension reduction in the full range of
parameters, as well as recover previous results in a unified manner.

There are essentially three new contributions in this paper:
\begin{enumerate}
\item[{(i)}]  For the prestrain of order at least as that inducing 
the incompatible version of the F\"oppl-von K\'arm\'an equations in \cite{lemapa1},
we: find the $\Gamma$-limits of the rescaled energies; identify the
optimal energy scaling laws; and display the equivalent conditions for
optimality in terms of both the prestrain components and the
curvatures of the related Riemannian metrics.
Similarly to the case of large prestrain \cite{LRR}, we observe that
one such condition is the non-vanishing of the lowest order
terms in the curvatures $R_{12,12}, R_{12, 13}, R_{13,23}$ along
the midplate. These results are valid  when $ \alpha\geq 4,\gamma\geq 2$ in (\ref{Ah}).

\item[{(ii)}] In the larger prestrain regime we propose new energy upper
  bounds, based on the construction of a sequence of deformations
  via the Kirchhoff-Love extension of the highly perturbative, H\"older-regular solutions to the
  Monge-Ampere equation obtained by convex integration.
These results are valid when $\alpha\in (0,4)$, $\gamma>0$ in (\ref{Ah}).

\item[{(iii)}] When the stretching-inducing prestrain is of order lower
  than that allowed in (i), but carries no in-plane modes, we still perform the full
  analysis as in (i) and discover similarities with both the theories
  in (i) and the shallow shell models of \cite{lemapa2}.
This corresponds to the case  $\alpha,\gamma\geq 2$ and $S_{2\times 2}\equiv 0$ in (\ref{Ah}).
\end{enumerate}

The remaining part of the introductory section will be devoted to the
description of our results, and their discussion including connection to previous literature.

\subsection{The set-up}

Thin prestrained films arise in science and technology in a variety of si\-tuations
and from a range of causes: inhomogeneous growth, plastic deformation,
swelling or shrinkage driven by solvent absorption. In all these
situations, the resulting shape is a consequence of the local and
heterogeneous incompatibility of strains that leads to local elastic
stresses. One approach towards understanding the coupling between residual stress and the
ultimate shape of the body relies on the model of ``non-Euclidean
elasticity''.  

The model postulates that an elastic three-dimensional film $\Omega$ seeks to realize a
configuration with a prescribed Riemannian metric
$\mathcal{G}$. Although $\G$ always has a Lipschitz isometric
immersion, one can show that any such immersion $u:\Omega\to \R^3$ necessarily
changes its orientation in any neighbourhood of a point where the
Riemann curvature of $\G$ is not zero. Excluding such nonphysical
deformations leads to the elastic energy $I(u)$ which measures how far a
given deformation $u$ is from being an orientation preserving
realization of $\G$. Equivalently, $I(u)$ quantifies the total
pointwise deviation of the deformation gradient $\nabla u$ from
$\G^{1/2}$, modulo orientation-preserving rotations. The infimum of $I$ in absence of any forces or
boundary conditions is then indeed strictly positive for a
non-Euclidean $\G$, pointing to existence of residual stress.

This approach borrows from the theory of plasticity, in as much as it uses a multiplicative decomposition
of the deformation gradient, and requires the notion of a reference
configuration $\Omega$ with respect to which all displacements are measured.
We assume that the elastic response derives from hyperelasticity, while the
inelastic deformation follows different laws depending on their
origin and it is encoded in the given prestrain $A=\sqrt{\G}$.
We point out that this description follows the
one-way coupling of growth to shape and ignores the feedback from
shape to growth. On the frontiers of the related experimental
modeling \cite{26, 25, 51, 15}, we mention the halftone gel
lithography method for polymeric materials that can swell by imbibing
fluids. By blocking the ability of portions of plate to swell or causing
them to swell inhomogeneously, it is possible to have 
the plate assume a variety of deformed shapes. Even more sophisticated techniques of biomimetic
4d printing allow for engineering of the 3d shape-morphing systems that mimic nastic plant motions
where organs such as tendrils, leaves and flowers respond to the environmental stimuli \cite{22}.

\smallskip

In this paper we consider a family of $(\Omega^h, u^h, \G^h,
A^h, I^h)_{h>0}$ as described above, but given in function of the film's thickness parameter
$h$. Following a large body of previous literature on dimension
reduction in nonlinear and non-Euclidean elasticity (see e.g. \cite{Lew_last}
and references therein), we are interested in predicting the scaling of $\inf
I^h$ as $h\to 0$ and analyzing the asymptotic behaviour of the minimizing deformations $u^h$,
from the curvatures of the prestrain. In analogy to the case of
``shallow shells'' considered in \cite{lemapa2}, here we treat the
``shallow prestrain'', i.e. we assume that the family of imposed tensors $A^h$
consists of perturbation of $Id$ of the order that is a power of the
thickness. We explicitly distinguish between the stretching-generating
leading order prestrain $S$ and the bending-related $B$, and define:
\begin{equation}\label{Ah}
\begin{split} 
A^h(x', x_3) = &\;  Id_3+ h^{\alpha/2} S(x') + h^{\gamma/2}x_3B(x')
\\ & \; \mbox{for all }\; x=(x', x_3)\in \Omega^h=\omega \times \big(-\frac{h}{2}, \frac{h}{2}\big),
\end{split}
\end{equation}
where $S,B:\bar\omega\to \R^{3\times 3}_{sym}$ 
and $\alpha,\gamma> 0$. The open, bounded set $\omega\subset\mathbb{R}^2$
with Lipschitz boundary is viewed as the midplate of the thin film
$\Omega^h$, on which we pose the energy of elastic deformations:
\begin{equation}\label{Ih}
I^h(u^h) = \frac{1}{h}\int_{\Omega^h} W\big((\nabla
u^h)(A^h)^{-1}\big)\dx\qquad \mbox{ for all }\; u^h\in W^{1,2}(\Omega^h,\R^3).
\end{equation}
Here, $W:\mathbb{R}^{3\times 3}\to\R$ is a Borel measurable density function, 
assumed to be $\mathcal{C}^2$ in a neighborhood
of $SO(3)$ and to satisfy, for every $F\in \mathbb{R}^{3 \times 3}$
and  $R\in SO(3)$:
\begin{equation}\label{Wpro}
\begin{split}
& W(R)=0, \qquad W(RF)=W(F), \qquad W(F)\geq c\dist^2(F, SO(3)).
\end{split}
\end{equation}

\subsection{Gamma-convergence: case $\mathbf{ \alpha\geq 4,\gamma\geq 2}$}

Our first result analyzes the case of bending and stretching components
in $A^h$ (i.e.  terms $h^{\alpha/2}S$,  $h^{\gamma/2}x_3B$) with scaling at least $h^2$.

\begin{theorem}\label{th_main1}
Let $\alpha\geq 4$, $\gamma\geq 2$. We have the following energy scaling and $\Gamma$-limit results.
\begin{enumerate}[leftmargin=8mm]
\item[(i)] When $\gamma=2$ then $\,\inf I^h\leq Ch^4$, and
  $h^{-4}I^h\overset{\Gamma}{\longrightarrow} I$ where: 
$$I(v,w) = \frac{1}{24}\int_\omega \mathcal{Q}_2\big(\nabla^2v +
B_{2\times 2}\big)\dxp + \frac{1}{2}\int_\omega\mathcal{Q}_2\Big(\mathrm{sym}\,\nabla w +
\frac{1}{2}(\nabla v)^{\otimes 2}  - \left\{\begin{array}{ll}
0 & \mbox{ for }\; \alpha>4\\ S_{2\times 2} & \mbox{ for }\; \alpha=4\end{array}\right.\Big)\dxp.$$

\item[(ii)] When $\gamma\in (2,\alpha-2]$ then $\,\inf I^h\leq
  Ch^{2+\gamma}$, and $h^{-(2+\gamma)}I^h\overset{\Gamma}{\longrightarrow} I$ where:
$$I(v,w) = \frac{1}{24}\int_\omega \mathcal{Q}_2\big(\nabla^2v +
B_{2\times 2}\big)\dxp + \frac{1}{2}\int_\omega\mathcal{Q}_2\Big(\mathrm{sym}\,\nabla w - \left\{\begin{array}{ll}
0 & \mbox{ for }\; \gamma<\alpha -2\\ S_{2\times 2} & \mbox{ for }\;
\gamma = \alpha-2\end{array}\right.\Big)\dxp.$$

\item[(iii)] When $\gamma >\alpha-2$ then $\,\inf I^h\leq
  Ch^{\alpha}$, and $h^{-\alpha}I^h\overset{\Gamma}{\longrightarrow} I$ where:
$$I(v,w) = \frac{1}{24}\int_\omega \mathcal{Q}_2\big(\nabla^2v\big)\dxp
+ \frac{1}{2}\int_\omega\mathcal{Q}_2\Big(\mathrm{sym}\,\nabla w + 
\left\{\begin{array}{ll}
0 & \mbox{ for }\; \alpha>4\\ \frac{1}{2}(\nabla v)^{\otimes 2} & \mbox{ for }\; \alpha=4\end{array}\right.\dxp
- S_{2\times 2} \Big)\dxp.$$ 
\end{enumerate}
All $\Gamma$-limit functionals $I(v,w)$ above are defined on the scalar out-of-plane
displacements $v\in W^{2,2}(\omega, \R)$ and the in-plane displacements $w\in W^{1,2}(\omega, \R^2)$.
The $\Gamma$-convergence statements are with respect to the following
compactness properties (valid in each corresponding scaling regime, with convergence up to a subsequence):
\begin{equation}\label{pomoc}
\begin{split}
&y^h(x', x_3) \doteq (\bar R^h)^T u^h(x', hx_3)-c^h\to x' \quad \mbox{in } \; W^{1,2}(\omega, \R^3) 
\\ & \qquad\qquad\qquad \qquad\qquad\qquad \qquad\qquad\qquad \mbox{for some }\; \bar
R^h\in SO(3), c^h\in \R^3, \\ & V^h(x')\doteq
h^{-\delta/2}\fint_{-1/2}^{1/2}y^h(x', x_3) -x'\;\mathrm{d}x_3 \to
(0,0,v) \quad \mbox{in } \; W^{1,2}(\omega, \R^3), \\ &
h^{-1}(V^h_1, V^h_2)\rightharpoonup w \quad \mbox{in } \; W^{1,2}(\omega, \R^{2}),
\end{split}
\end{equation}
where $\delta=2$ in case (i), $\delta = \gamma$ in case (ii), and $\delta=\alpha-2$ in case (iii).
\end{theorem}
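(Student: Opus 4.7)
The plan is to follow the Friesecke--James--M\"uller scheme adapted to non-Euclidean plates as in \cite{lemapa1}, treating the three regimes simultaneously. Set $e=4$, $e=2+\gamma$, $e=\alpha$ and $\delta=2$, $\delta=\gamma$, $\delta=\alpha-2$ in cases (i), (ii), (iii) respectively.

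For the compactness (\ref{pomoc}), I would start from $I^h(u^h)\le C h^e$. The growth condition in (\ref{Wpro}) and the uniform invertibility of $A^h=\mathrm{Id}_3+O(h^{\min(\alpha,\gamma)/2})$ give $\int_{\Omega^h}\dist^2(\nabla u^h,SO(3))\,\dx\le C h^{1+e}$. The geometric rigidity theorem applied on cylindrical covers of $\Omega^h$ at scale $h$ produces a piecewise-constant rotation field $R^h:\omega\to SO(3)$ with $\|\nabla u^h-R^h\|_{L^2}^2\le Ch^{1+e}$ and $\|\nabla_{x'}R^h\|_{L^2}^2\le Ch^{e-2}$. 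Averaging yields a single $\bar R^h\in SO(3)$; subtracting $\bar R^h x'+c^h$ from $u^h(\cdot,h\cdot)$ and rescaling by $h^{-\delta/2}$ extracts the out-of-plane $v$ at the correct order, while an additional division by $h$ of the in-plane components produces $w$.

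For the lower bound, I would introduce the relative strain
\begin{equation*}
G^h(x',x_3)\doteq h^{-e/2}\Big((\bar R^h)^T\nabla u^h(x',hx_3)\,(A^h(x',hx_3))^{-1}-\mathrm{Id}_3\Big),
\end{equation*}
uniformly bounded in $L^2(\Omega)$ with $\Omega=\omega\times(-1/2,1/2)$. Its weak $L^2$ limit $G$ has a $2\times 2$ in-plane block identifiable via (\ref{pomoc}) as
\begin{equation*}
G_{2\times 2}=\mathrm{sym}\,\nabla w+\big[\tfrac12(\nabla v)^{\otimes 2}\big]-\big[S_{2\times 2}\big]+x_3\big(\nabla^2 v+\big[B_{2\times 2}\big]\big),
\end{equation*}
where each bracketed term is retained or dropped precisely according to the case dichotomies in the theorem statement. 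Taylor-expanding $W$ near $SO(3)$ yields $\liminf_{h\to 0} h^{-e}I^h(u^h)\ge\tfrac12\int_\Omega\mathcal{Q}_3(G)\,\dx$; pointwise minimization of $\mathcal{Q}_3$ over the free third row/column of $G$ collapses it to $\mathcal{Q}_2(G_{2\times 2})$, and integration in $x_3\in(-1/2,1/2)$ reproduces $I(v,w)$ exactly.

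For the upper bound and recovery sequence, I would use the Kirchhoff--Love ansatz
\begin{equation*}
u^h(x',x_3)=x'+h^{\delta/2}(0,0,v(x'))+h^{\delta/2+1}(w(x'),0)-h^{\delta/2}x_3(\nabla v(x'),0)+x_3\vec{d}^h(x')+x_3^2\vec{p}^h(x'),
\end{equation*}
where the correctors $\vec{d}^h, \vec{p}^h:\omega\to\R^3$ are chosen pointwise to cancel the third row/column of the prestrain and to realize the infimum defining $\mathcal{Q}_2$ from $\mathcal{Q}_3$. For smooth $(v,w)$, a direct Taylor expansion of $W$ shows the resulting energy matches $I(v,w)$; density of smooth pairs in $W^{2,2}(\omega,\R)\times W^{1,2}(\omega,\R^2)$ together with a diagonal extraction yields the general recovery sequence. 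The main obstacle is the identification of $G_{2\times 2}$ at the threshold subcases $\alpha=4$ and $\gamma=\alpha-2$: there one must simultaneously track the small parameters $h^{\delta/2}$, $h^{\delta/2+1}$, $h^{\alpha/2}$ and $h^{\gamma/2}$ and verify, through the expansion $(\bar R^h)^T R^h=\mathrm{Id}+h^{e/2-1}A^h_{\text{skew}}+\ldots$, that no skew-symmetric rotational residue contaminates the symmetric limit $G_{2\times 2}$. This balancing is exactly what makes the indicator brackets above flip at the threshold exponents.
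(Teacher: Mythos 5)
Your architecture is the same as the paper's: FJM-type rigidity to produce rotations $R^h$, identification of the weak limit of a rescaled strain, lower bound via Taylor expansion and the reduction $\mathcal{Q}_3\to\mathcal{Q}_2$, and a Kirchhoff--Love ansatz with affine correctors for the recovery sequence. However, two steps as written would fail.

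First, your strain $G^h= h^{-e/2}\big((\bar R^h)^T\nabla u^h\,(A^h)^{-1}-\mathrm{Id}_3\big)$ with the \emph{constant} rotation $\bar R^h$ is not bounded in $L^2$: writing $(\bar R^h)^T\nabla u^h(A^h)^{-1}-\mathrm{Id}_3=(\bar R^h)^TR^h\big((R^h)^T\nabla u^h(A^h)^{-1}-\mathrm{Id}_3\big)+\big((\bar R^h)^TR^h-\mathrm{Id}_3\big)$, the second summand is only $O(h^{\delta/2})$, which after division by $h^{e/2}=h^{1+\delta/2}$ blows up like $h^{-1}$. The strain must be built with the spatially varying $R^h(x')$ from rigidity (as in the paper's definition preceding (\ref{due})); the field $\frac{1}{h^{\delta/2}}\big((\bar R^h)^TR^h-\mathrm{Id}_3\big)\rightharpoonup P$ then enters separately, and it is precisely $\tfrac12(P^TP)_{2\times2}=\tfrac12(\nabla v)^{\otimes2}$ (when $\delta=2$) that produces the quadratic term in the stretching energy --- this is the content of (\ref{sette}) and Corollary \ref{cor_com3}, not a mere bookkeeping check. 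Second, in the upper bound a ``direct Taylor expansion of $W$'' on your ansatz does not reach order $h^{2+\delta}$: the deformation gradient contains the skew block $h^{\delta/2}\left[\begin{smallmatrix}0&-\nabla v\\ \nabla v&0\end{smallmatrix}\right]$, so the cubic remainder in the expansion of $W$ about $\mathrm{Id}_3$ is $O(h^{3\delta/2})$, which dominates $h^{2+\delta}$ whenever $\delta<4$ (in particular in case (i)). One must first conjugate by the rotation field $q^h=\exp\big(h^{\delta/2}\left[\begin{smallmatrix}0&\nabla v\\-\nabla v&0\end{smallmatrix}\right]\big)$, using frame invariance, so that the argument of $W$ is $\mathrm{Id}_3+O(h^{1+\delta/2})$ and, as a by-product, the term $\tfrac12 h^{\delta}(\nabla v)^{\otimes 2}$ appears explicitly in the symmetric part. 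These are exactly the devices of Lemma \ref{lem_com2} and subsection \ref{standard}; without them the two central identifications in your outline do not go through.
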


The above theorem encompasses several cases studied before.
The case $\gamma=2$, $\alpha=4$ in (i) has been covered in
\cite{lemapa1}, and the case $\alpha=2\gamma$, $\gamma>2$ in (ii) was analyzed
in \cite{JZ}. We also refer to the paper \cite{BS}, where the authors considered prestrain of the type: $A^h(x_3)
= \big(Id_3 + h^{a-1} b\big(\frac{x_3}{h}\big)\big)^{-1}$ with
$a>2$. In our notation and for smooth tensor $b$, this is equivalent to having: $\alpha=2a-2>2$,
$\gamma=\alpha-2$, and $S=-b(0)$, $B=-b'(0)$ constant. When $a=3$ this
leads to a subcase of (i), while when $a>3$ to a subcase of (ii) in
which $I(v,w)$ can be always minimized to $0$. Thus, the optimal scaling of
$I^h$ in that case must have scaling exponent larger than $2a-2=\alpha$, studied in \cite{BS}.
We remark that the main contribution of \cite{BS} was, however, allowing
$W$ to depend on $\frac{x_3}{h}$ and $b:(-\frac{1}{2},
\frac{1}{2})\to\R^{3\times 3}_{sym}$ to have regularity $L^\infty$.

When $S=B\equiv 0$ in $\omega$ then all three cases in Theorem \ref{th_main1} reduce to the von
Karman and linear theories in classical nonlinear elasticity, derived
in \cite{FJM}. On the other hand, the limiting functional in case (ii) when $\gamma=\alpha-2$
seems to be new with respect to the previous literature. Of interest
is also the $\Gamma$-limit in (iii). When $\alpha=4$, its minimization amounts to
finding the displacement $v$ whose combined magnitude of the total induced curvature $\nabla^2v$
and the deviation of the Gaussian curvature $\det\nabla^2v$
from the given $-\mathrm{curl}^T\mathrm{curl}\,S_{2\times
  2}$, is the smallest.

In this line, we further remark that the in-plane displacement $w$ is always
slaved to $S,B$ and $v$, and as such can be omitted all together:

\begin{corollary}\label{coro1}
In the context of Theorem \ref{th_main1}, assume additionally that
$\omega$ is simply connected. Then each stretching term may be replaced
by the following squared distance from the space $\big\{\mathrm{sym}\nabla
w;~ w\in W^{1,2}(\omega,\R^2)\big\}$, where $\delta=2$ in case (i),
$\delta = \gamma$ in case (ii), and $\delta=\alpha-2$ in (iii):  
$$\frac{1}{2}\min_{w\in W^{1,2}(\omega, \R^2)}\int_\omega\mathcal{Q}_2\Big(\mathrm{sym}\,\nabla w +
\left\{\begin{array}{ll} 0 & \mbox{ for }\; \delta>2\\\frac{1}{2}
    (\nabla v)^{\otimes 2} & \mbox{ for }\; \delta=2\end{array}\right. - \left\{\begin{array}{ll} 
0 & \mbox{ for }\; \alpha>2+\delta \\ S_{2\times 2} & \mbox{ for }\;
\alpha=2+\delta \end{array}\right. \Big)\dxp.$$
Consequently, we have the equivalences below, where the congruency
symbol $a\simeq b$ means that $a\leq Cb$ and $b\leq Ca$ with a
constant $C$ depending only on $\omega$:
\begin{enumerate}
\item[(i)] If  $\gamma=2$, $\alpha>4$ then $h^{-4}I^h\overset{\Gamma}{\longrightarrow} \bar I$, where: 
$\bar I(v)  \simeq \|\nabla^2v +B_{2\times 2}\|^2_{L^2(\omega)}+\|\det\nabla^2v\|^2_{H^{-2}(\omega)}$.
If $\gamma=2$, $\alpha=4$ then:
$h^{-4}I^h\overset{\Gamma}{\longrightarrow} \bar I$, where
\begin{equation*}
\begin{split}
\bar I(v) \simeq \|\nabla^2v +B_{2\times
  2}\|^2_{L^2(\omega)}+\|\det\nabla^2v + \mathrm{curl}^T\mathrm{curl} S_{2\times 2}\|^2_{H^{-2}(\omega)}.
\end{split}
\end{equation*}

\item[(ii)] If $\gamma\in (2,\alpha-2]$ then
  $h^{-(2+\gamma)}I^h\overset{\Gamma}{\longrightarrow} \bar I$, with:
$\bar I(v) \simeq \|\nabla^2v +B_{2\times
  2}\|^2_{L^2(\omega)}+\|\mathrm{curl}^T\mathrm{curl} S_{2\times 2}\|^2_{H^{-2}(\omega)}.$
Alternatively, the same rescaled energies $\Gamma$-converge to the constant limit:
$$\bar{\bar I} \equiv \min \bar I \simeq \|\mathrm{curl}\,B_{2\times
  2}\|^2_{H^{-1}(\omega)}+\|\mathrm{curl}^T\mathrm{curl} S_{2\times 2}\|^2_{H^{-2}(\omega)}.$$

\item[(iii)] If $\alpha=4$, $\gamma >2$ then
  $h^{-4}I^h\overset{\Gamma}{\longrightarrow} \bar I$, where:
$\bar I(v) \simeq \|\nabla^2v
\|^2_{L^2(\omega)}+\|\det\nabla^2 v + \mathrm{curl}^T\mathrm{curl} S_{2\times 2}\|^2_{H^{-2}(\omega)}. $
If $\gamma >\alpha-2>2$ then
$h^{-\alpha}I^h\overset{\Gamma}{\longrightarrow} \bar{\bar I}$, where:
$\bar{\bar I} \equiv \min I \simeq \|\mathrm{curl}^T\mathrm{curl} S_{2\times 2}\|^2_{H^{-2}(\omega)}.$
\end{enumerate}
All $\Gamma$-limit functionals $\bar I(v)$ are defined on the scalar out-of-plane
displacements $v\in W^{2,2}(\omega, \R)$.
\end{corollary}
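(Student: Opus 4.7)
The plan is as follows. First, for each fixed out-of-plane displacement $v$, each functional $I(v,\cdot)$ in Theorem~\ref{th_main1} is a coercive quadratic form in $w\in W^{1,2}(\omega,\R^2)$, minimized by orthogonal projection in the Hilbert space $L^2(\omega,\R^{2\times 2}_{sym})$ equipped with the inner product induced by the positive definite quadratic form $\mathcal{Q}_2$. The resulting minimum of the stretching term is precisely the squared $\mathcal{Q}_2$-distance from the admissible tensor field $T$ (assembled from $\frac{1}{2}(\nabla v)^{\otimes 2}$ and $S_{2\times 2}$ according to the regime) to the closed subspace $\mathcal{E}=\{\mathrm{sym}\,\nabla w:w\in W^{1,2}(\omega,\R^2)\}$, which gives the formula displayed in the first assertion of the Corollary. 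The $\Gamma$-convergence of $h^{-\cdot}I^h$ to the reduced functional $\bar I$ follows from the $\Gamma$-convergence in Theorem~\ref{th_main1} by a standard marginalization argument, since the stretching integrand is convex, continuous and coercive in $w$ at fixed $v$. As $\mathcal{Q}_2$ is equivalent, up to constants depending only on $W$, to the squared Frobenius norm on $\R^{2\times 2}_{sym}$, we may work with the standard $L^2$ inner product throughout.

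Next, we identify $\dist_{L^2}(T,\mathcal{E})$ with $\|\mathrm{curl}^T\mathrm{curl}\,T\|_{H^{-2}(\omega)}$. The two-dimensional Saint-Venant compatibility theorem asserts that on a simply connected Lipschitz $\omega$, the kernel of the continuous operator $\mathrm{curl}^T\mathrm{curl}:L^2(\omega,\R^{2\times 2}_{sym})\to H^{-2}(\omega)$ coincides with $\mathcal{E}$. To upgrade this to a quantitative distance estimate, we construct a bounded right inverse via the Airy stress function: for each $\phi\in W^{2,2}_0(\omega)$ one has the pointwise identity $\mathrm{curl}^T\mathrm{curl}(\mathrm{cof}\,\nabla^2\phi)=\Delta^2\phi$, and $\Delta^2:W^{2,2}_0(\omega)\to H^{-2}(\omega)$ is an isomorphism by standard biharmonic Dirichlet theory. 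The closed-range theorem then yields
\[
\dist_{L^2}(T,\mathcal{E})\simeq\|\mathrm{curl}^T\mathrm{curl}\,T\|_{H^{-2}(\omega)},
\]
with constants depending only on $\omega$.

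Third, we feed in the pointwise algebraic identity
\[
\mathrm{curl}^T\mathrm{curl}\bigl((\nabla v)^{\otimes 2}\bigr) = -2\det\nabla^2 v\qquad\text{for every }v\in W^{2,2}(\omega,\R),
\]
whose distributional interpretation via integration by parts places the Hessian determinant in $H^{-2}(\omega)$. Using linearity of $\mathrm{curl}^T\mathrm{curl}$ in each of the regimes (i), (ii), (iii) then produces the displayed formulas for $\bar I(v)$.

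Finally, for the alternative constant limit $\bar{\bar I}$ in case (ii) (and the one in case (iii) with $\gamma>\alpha-2>2$) we further minimize over $v\in W^{2,2}(\omega,\R)$: only the bending term $\|\nabla^2 v+B_{2\times 2}\|_{L^2}^2$ depends on $v$, so the task reduces to computing the squared $L^2$-distance from $B_{2\times 2}$ to the closed subspace $\{\nabla^2 v:v\in W^{2,2}\}\subset L^2(\omega,\R^{2\times 2}_{sym})$. An analogous Poincar\'e-type characterization on simply connected $\omega$ identifies this space as the kernel of the row-wise curl operator $L^2(\omega,\R^{2\times 2}_{sym})\to H^{-1}(\omega,\R^2)$, and a suitable right inverse (built from Helmholtz-type potentials and Poisson regularity) yields
\[
\dist_{L^2}(B_{2\times 2},\{\nabla^2 v\})\simeq\|\mathrm{curl}\,B_{2\times 2}\|_{H^{-1}(\omega)}.
\]
The principal technical obstacle in the whole argument is the construction of these two bounded right inverses, both of which rely crucially on simple connectedness of $\omega$ and on elliptic regularity (biharmonic for $\mathrm{curl}^T\mathrm{curl}$, Poisson for the row-wise curl); all remaining assertions follow from Hilbert-space projection together with the algebraic identities above.
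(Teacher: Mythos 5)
Your proposal is correct and follows essentially the same route as the paper, which derives the corollary from the two distance-equivalence lemmas of Section \ref{sec_Helm}: your Airy-function/biharmonic-isomorphism construction is precisely the Euler--Lagrange reformulation of the paper's variational proof that $\mathrm{dist}_{L^2}(F,\{\mathrm{sym}\,\nabla w\})\cong\|\mathrm{curl}^T\mathrm{curl}\,F\|_{H^{-2}}$, and the marginalization over $w$ and the identity $\mathrm{curl}^T\mathrm{curl}\big((\nabla v)^{\otimes 2}\big)=-2\det\nabla^2v$ are used in the same way. The only place you are vaguer than the paper is the $H^{-1}$ estimate $\mathrm{dist}_{L^2}(B_{2\times 2},\{\nabla^2v\})\cong\|\mathrm{curl}\,B_{2\times 2}\|_{H^{-1}}$, where the right inverse must produce a \emph{symmetric} correction (the paper uses $\mathrm{cof}\,\mathrm{sym}\,\nabla\phi$ together with Korn's inequality for this), but this does not change the argument.
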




Next, we observe the bound on the infimum of the following limiting functional
corresponding to $\gamma=2, \alpha=4$, which also contains cases (i)
and (iii). The bound is consistent with the optimality conditions in
Theorem \ref{th_main10}.

\begin{proposition}\label{infbd}
Let $\omega\subset\R^2$ be open, bounded, simply connected. Denote
$\bar B=B_{2\times 2}$, $\bar S= S_{2\times 2}$, and for $v\in W^{2,2}(\omega, \R)$ define:
$$\bar I_0(v) = \|\nabla^2v + \bar B
\|^2_{L^2(\omega)}+\|\det\nabla^2 v + \mathrm{curl}^T\mathrm{curl}
\,\bar S\|^2_{H^{-2}(\omega)}. $$ 
Then, with some constants $c,C>0$ depending only on $\omega$ we have:
\begin{equation*}
\begin{split}
\inf \bar I_0\leq & \; C \,\|\mathrm{curl}\, \bar
B\|_{H^{-1}(\omega)}^2\Big(1+\|\bar B\|^2_{L^\infty(\omega)} +
\|\mathrm{curl}\, \bar B\|_{H^{-1}(\omega)}^2\Big) \\ & + C\,\|\det
\bar B + \mathrm{curl}^T\mathrm{curl}
\,\bar S\|^2_{H^{-2}(\omega)}, \\ 
\inf \bar I_0\geq & \; c\, \|\mathrm{curl}\, \bar B\|_{H^{-1}}^2
+ \frac{c}{1 \vee a } \|\det \bar B + \mathrm{curl}^T\mathrm{curl}
\, \bar S\|^2_{H^{-2}(\omega)},
\end{split}
\end{equation*}
where $a= \|\bar B\|^2_{L^\infty} + \|\det \bar B + \mathrm{curl}^T\mathrm{curl}
\, \bar S\|^2_{H^{-2}} + \|\mathrm{curl}\, \bar
B\|_{H^{-1}}^2 (1+\|\bar B\|^2_{L^\infty} + \|\mathrm{curl}\, \bar B\|_{H^{-1}}^2)$.
\end{proposition}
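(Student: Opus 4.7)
The proof splits into upper and lower bound arguments, treated separately; both rely on the algebraic identity $\det(-\bar B + E) = \det \bar B - \mathrm{cof}(\bar B):E + \det E$, valid for any two $2\times 2$ matrices.

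For the \emph{upper bound}, the plan is to construct a competitor whose Hessian is close to $-\bar B$. Since $\omega$ is simply connected, a standard Poincaré-type inequality for symmetric tensor fields on Lipschitz domains furnishes $v \in W^{2,2}(\omega,\R)$ with
$$\|\nabla^2 v + \bar B\|_{L^2}^2 \leq C \|\mathrm{curl}\,\bar B\|_{H^{-1}}^2.$$
Setting $E = \nabla^2 v + \bar B$ and applying the identity above then yields
$$\det \nabla^2 v + \mathrm{curl}^T\mathrm{curl}\,\bar S = \bigl(\det \bar B + \mathrm{curl}^T\mathrm{curl}\,\bar S\bigr) - \mathrm{cof}(\bar B){:}E + \det E.$$
The middle term is estimated in $H^{-2}$ via $L^2 \hookrightarrow H^{-2}$ as $\lesssim \|\bar B\|_{L^\infty}\|E\|_{L^2}$, while $\|\det E\|_{H^{-2}} \lesssim \|\det E\|_{L^1} \lesssim \|E\|_{L^2}^2$, using $L^1 \hookrightarrow H^{-2}$, dual to $H^2 \hookrightarrow L^\infty$ in dimension two. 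Squaring the triangle inequality and combining with the $L^2$ bound on $E$ yields the claimed upper estimate.

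For the \emph{lower bound}, fix $v$ with $\bar I_0(v) \leq 2M$, where $M := \inf \bar I_0$. Since every $W^{2,2}$ function $v$ satisfies $\mathrm{curl}\,\nabla^2 v = 0$ distributionally, one gets $\|\mathrm{curl}\,\bar B\|_{H^{-1}} = \|\mathrm{curl}(\bar B + \nabla^2 v)\|_{H^{-1}} \lesssim \|\nabla^2 v + \bar B\|_{L^2} \leq (2M)^{1/2}$, producing the first half of the lower bound. For the second half, the same identity written in reverse yields
$\|\det \bar B + \mathrm{curl}^T\mathrm{curl}\,\bar S\|_{H^{-2}} \leq M^{1/2} + C\|\bar B\|_{L^\infty}\|E\|_{L^2} + C\|E\|_{L^2}^2$. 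Squaring together with $\|E\|_{L^2}^2 \leq 2M$ gives
$$\|\det \bar B + \mathrm{curl}^T\mathrm{curl}\,\bar S\|_{H^{-2}}^2 \leq C M \bigl(1 + \|\bar B\|_{L^\infty}^2 + M\bigr).$$
To convert this multiplicative inequality to the stated additive form, I would split into cases $M \geq 1$ and $M < 1$. In the former, $M \geq 1 \geq \|\det \bar B + \mathrm{curl}^T\mathrm{curl}\,\bar S\|_{H^{-2}}^2/a$, because $a$ dominates the second summand by construction. In the latter, $1 + \|\bar B\|_{L^\infty}^2 + M \leq 2(1\vee a)$ since $a \geq \|\bar B\|_{L^\infty}^2$, yielding $M \gtrsim \|\det \bar B + \mathrm{curl}^T\mathrm{curl}\,\bar S\|_{H^{-2}}^2/(1\vee a)$ directly. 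Adding to the $\mathrm{curl}\,\bar B$ bound and adjusting constants completes the proof.

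The main technical obstacle is the Poincaré-type estimate bounding the $L^2$ distance of a symmetric tensor field from the subspace of Hessians by its $H^{-1}$-curl on a simply connected Lipschitz domain; though a standard consequence of the closed-range theorem, its careful invocation underlies the entire upper-bound step. The case distinction converting the nonlinear lower-bound inequality into the $(1\vee a)$-form is elementary, but has to be organized with care, since the definition of $a$ is tailored precisely to absorb the worst-case contributions coming from the product $M(1 + \|\bar B\|_{L^\infty}^2 + M)$.
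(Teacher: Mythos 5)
Your proposal is correct and follows essentially the same route as the paper: the same expansion $\det\nabla^2 v=\det\bar B-\mathrm{cof}\,\bar B:E+\det E$ with $E=\nabla^2v+\bar B$, the same estimates $\|\det E\|_{H^{-2}}\lesssim\|E\|_{L^2}^2$ and $\|\mathrm{cof}\,\bar B:E\|_{H^{-2}}\lesssim\|\bar B\|_{L^\infty}\|E\|_{L^2}$, and the same use of the equivalence $\mathrm{dist}_{L^2}(\bar B,\{\nabla^2 r\})\cong\|\mathrm{curl}\,\bar B\|_{H^{-1}}$ (Lemma \ref{lem_dist_hes}). Your case split $M\gtrless 1$ is just a reorganization of the paper's choice $2\epsilon=1\wedge(1/a)$ in absorbing the error term, so the two arguments coincide in substance.
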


We finally remark that the following two cases, which are not included
in Theorem \ref{th_main1}: $\alpha=2\gamma$, $\gamma\in (0,2)$, and 
$\alpha\in (2,4)$,$\gamma=\alpha-2$ with $S$ and $B$ constant, have
been discussed in \cite{LOP} and \cite{BS}, respectively. They 
lead to the limiting theories with  Monge-Ampere constraints. 
In the present general setting, this case corresponds to taking
$\delta\leq \alpha/2$ and it will appear in \cite{J}.

\subsection{Energy scaling: case $\mathbf{0<\alpha<4}$ and $\mathbf{\gamma>0}$}
Our second result provides the energy bound when the prestrain in
$A^h$ is of order higher than $h^2$. Namely, we have:

\begin{theorem}\label{th_general_scaling} 
Assume that $\omega\subset\R^2$ is simply connected and has
$\mathcal{C}^{1,1}$-regular boundary. Let $\alpha\in (0,4)$ and
$\gamma>0$. Then the following holds:
\begin{enumerate}[leftmargin=8mm]
\item[(i)] If $\alpha\in [4/7,4)$ and $5\alpha/6+2/3>2+\gamma$, then
  $\inf I^h\leq Ch^{2+\gamma}$.

\item[(ii)] If $\alpha\in [4/7,4)$ and $5\alpha/6+2/3\leq 2+\gamma$, then
  $\inf I^h\leq Ch^{\delta}$ for every $\delta\in (0, 5\alpha/6 + 2/3)$.

\item[(iii)] If $\alpha\in (0,4/7)$, then $\inf I^h\leq Ch^{2\alpha}$.
\end{enumerate}
When $B=0$ then either  $\inf I^h\leq Ch^{\delta}$ for every
$\delta\in (0, 5\alpha/6 + 2/3)$ or $\inf I^h\leq Ch^{2\alpha}$,
depending on whether $\alpha\in [4/7, 4]$ or $\alpha\in (0,4/7)$.
 \end{theorem}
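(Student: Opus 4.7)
\smallskip

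\textbf{Proof proposal for Theorem \ref{th_general_scaling}.}
The plan is to construct explicit low-energy test deformations via a Kirchhoff--Love ansatz whose midplate parametrization is a scaled perturbation of the identity, and whose stretching mismatch with $(A^h)(A^h)^T|_{x_3=0}$ is reduced to an approximate Monge--Amp\`ere equation solved by convex integration. Concretely, I would take
\begin{equation*}
u^h(x',x_3) \;=\; \begin{pmatrix} x' + h^{\alpha/2} w^h(x') \\ h^{\alpha/4} v^h(x') \end{pmatrix} + x_3 \vec N^h(x') + h^{2+\gamma/2}\,d^h(x',x_3),
\end{equation*}
with $\vec N^h$ the Cosserat normal to the midplate and $d^h$ a corrector designed to absorb the linear-in-$x_3$ term $h^{\gamma/2}x_3 B$. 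The scales $h^{\alpha/2}, h^{\alpha/4}$ are forced by the requirement that the linearized stretching $\mathrm{sym}\,\nabla(h^{\alpha/2}w^h)$ and the nonlinear stretching $\tfrac{1}{2}(\nabla(h^{\alpha/4}v^h))^{\otimes 2}$ both enter at the same order $h^{\alpha/2}$ as the prestrain $h^{\alpha/2}S_{2\times 2}$. The pointwise midplate stretching mismatch then reduces to $\mathrm{sym}\,\nabla w^h + \tfrac{1}{2}(\nabla v^h)^{\otimes 2} - S_{2\times 2}$, whose $\mathrm{curl}^T\mathrm{curl}$-compatibility condition is the Monge--Amp\`ere equation $\det \nabla^2 v^h = -\mathrm{curl}^T\mathrm{curl}\, S_{2\times 2}$ on $\omega$.

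Next I would invoke the convex integration machinery (in the style of Lewicka--Pakzad) to produce, on the simply connected $C^{1,1}$-regular $\omega$, a solution $v_0 \in C^{1,\beta}(\bar\omega)$ of this Monge--Amp\`ere equation for any exponent $\beta < 1/5$. Mollifying at scale $\epsilon$ yields $v_\epsilon\in C^\infty(\bar\omega)$ obeying the standard smoothing bound $\|\nabla^2 v_\epsilon\|_{L^2}^2 \lesssim \epsilon^{2\beta-2}$ together with the commutator-type residual estimate $\|\det \nabla^2 v_\epsilon + \mathrm{curl}^T\mathrm{curl}\, S_{2\times 2}\|_{H^{-2}} \lesssim \epsilon^{2\beta}$. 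A Helmholtz-type inversion of $\mathrm{sym}\,\nabla$ on $\omega$ (which requires precisely the simply connectedness and $C^{1,1}$ regularity hypothesis) then furnishes $w_\epsilon \in W^{1,2}(\omega,\R^2)$ for which the $L^2$ norm of the full stretching residual $\mathrm{sym}\,\nabla w_\epsilon + \tfrac{1}{2}(\nabla v_\epsilon)^{\otimes 2} - S_{2\times 2}$ is controlled by $\epsilon^{2\beta}$.

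Substituting $(v^h,w^h) := (v_{\epsilon(h)}, w_{\epsilon(h)})$ into the ansatz and expanding $W$ to second order about $SO(3)$, the energy splits into three leading contributions, corresponding to the stretching residual, the self-bending induced by $\nabla^2 v^h$, and the prestrain-bending surviving after the corrector $d^h$:
\begin{equation*}
I^h(u^h) \;\lesssim\; h^{\alpha}\epsilon^{4\beta} \,+\, h^{2+\alpha/2}\epsilon^{2\beta-2} \,+\, h^{2+\gamma}.
\end{equation*}
Balancing the first two terms by $\epsilon = h^{(4-\alpha)/(4+4\beta)}$ and letting $\beta \nearrow 1/5$ yields the joint exponent $\alpha + \tfrac{1}{6}(4-\alpha) = \tfrac{5\alpha}{6} + \tfrac{2}{3}$. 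The three cases of the statement then correspond precisely to whether the dominant summand is $h^{2+\gamma}$ (case (i)), $h^{5\alpha/6+2/3-o(1)}$ (case (ii)), or $h^{2\alpha}$ (case (iii), $\alpha < 4/7$), the last of which is already achieved by the simpler smooth ansatz $v^h \equiv 0$ with $w^h$ recovered from a Poisson-type equation matching $S_{2\times 2}$ to leading order. When $B \equiv 0$ the third summand disappears, and the convex integration bound $h^{5\alpha/6+2/3-o(1)}$ persists throughout $\alpha \in [4/7, 4]$ independently of $\gamma$.

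The main obstacle will be the passage from the only-$H^{-2}$ control of the Monge--Amp\`ere residual, which is intrinsic to the low H\"older regularity of convex-integration solutions, to an $L^2$ control of the full symmetric stretching residual needed to estimate the elastic energy. This forces a careful use of the inversion of $\mathrm{sym}\,\nabla$ on $\omega$ in concert with the commutator bounds, and is the reason for the $C^{1,1}$ boundary hypothesis. A secondary delicate point is designing the corrector $d^h$ so that it absorbs $h^{\gamma/2}x_3 B$ without producing cross-terms of intermediate order that would spoil either the stretching match or the $W^{2,2}$-regularity of $u^h$; this constraint is what forces the prestrain-bending to enter as the additive $h^{2+\gamma}$ summand rather than to merge into the convex-integration optimization, and thus produces the case split in the statement.
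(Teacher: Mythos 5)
Your overall strategy --- a Kirchhoff--Love ansatz at scales $h^{\alpha/4}$ (out-of-plane) and $h^{\alpha/2}$ (in-plane), convex integration for the stretching compatibility, mollification at a scale $\epsilon=h^t$ optimized against the commutator estimate, and the balance $h^{\alpha}\epsilon^{4a}\sim h^{2+\alpha/2}\epsilon^{2a-2}$ producing the exponent $5\alpha/6+2/3$ as $a\nearrow 1/5$ --- is the same as the paper's. But there are two substantive problems.

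First, case (iii) is not established. The paper's energy expansion carries a fourth summand $h^{4\delta}=h^{2\alpha}$ (with $\delta=\alpha/2$), coming from squaring the $\mathcal{O}(h^{2\delta})$ remainders of the rotation corrections $q^h=\exp(-P^h)$, $\bar q^h$ and of the expansion of $(A^h)^{-1}$; for $\alpha<4/7$ this term dominates $h^{5\alpha/6+2/3}$ and is precisely the source of the bound $Ch^{2\alpha}$. Your displayed bound $I^h\lesssim h^{\alpha}\epsilon^{4\beta}+h^{2+\alpha/2}\epsilon^{2\beta-2}+h^{2+\gamma}$ omits this term, and your substitute explanation for (iii) --- taking $v^h\equiv 0$ with $w^h$ from a Poisson-type equation --- only controls the stretching residual in $L^2$ by $\|\mathrm{curl}^T\mathrm{curl}\,S_{2\times2}\|_{H^{-2}}=\mathcal{O}(1)$, hence gives $\inf I^h\leq Ch^{\alpha}$, which is strictly weaker than $Ch^{2\alpha}$.

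Second, your route to the pair $(v_\epsilon,w_\epsilon)$ leaves a gap that the paper avoids. The paper takes \emph{both} $v$ and $w$ in $\mathcal{C}^{1,a}$ from \cite[Theorem 1.1]{CS}, satisfying $\frac12(\nabla v)^{\otimes2}+\mathrm{sym}\,\nabla w=S_{2\times2}$ exactly; after mollifying both, the commutator estimate of \cite{CDS} controls the residual in $\mathcal{C}^0$ by $\epsilon^{2a}$, and the bounds $\|\nabla^2 v_\epsilon\|_{\mathcal{C}^0}+\|\nabla^2 w_\epsilon\|_{\mathcal{C}^0}\lesssim\epsilon^{a-1}$ come for free. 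You instead take only $v$ from convex integration, estimate $\det\nabla^2 v_\epsilon+\mathrm{curl}^T\mathrm{curl}\,S_{2\times2}$ in $H^{-2}$, and recover $w_\epsilon$ abstractly by inverting $\mathrm{sym}\,\nabla$ (as in Lemma \ref{lem_dist_symgrad}); this yields $w_\epsilon\in W^{1,2}$ with only $L^2$ control. But the ansatz needs pointwise bounds on $\nabla w_\epsilon$ (it enters the rotation $\bar q^h$, whose higher-order remainders must be $\mathcal{O}(h^{2\delta})$ uniformly) and on $\nabla^2 w_\epsilon$ (it enters the $\mathcal{O}\big(h^{1+3\delta/2}\|\nabla^2 w_\epsilon\|\big)$ error and the pointwise validity of the Taylor expansion of $W$). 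Supplying those would require an additional quantitative elliptic-regularity step that you do not carry out. A minor point: the $\mathcal{C}^{1,1}$ boundary hypothesis is inherited from the convex integration input, not from the inversion of $\mathrm{sym}\,\nabla$, which in the paper only needs a Lipschitz boundary.
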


Figure \ref{fig:exponents} shows a diagram depicting various cases and
the corresponding scaling exponents. The indicated energy bounds are obtained
by constructing deformations $u^h$ through the Kirchhoff-Love
extension corresponding to the out-of-plane and the in-plane displacements
$(v,w)$, with regularity $\mathcal{C}^{1,\alpha}$ and satisfying
$\frac{1}{2}(\nabla v)^{\otimes 2} + \mbox{sym}\,\nabla w = S_{2\times
  2}$. Existence of such displacements is guaranteed 
by techniques of convex integration \cite{CS}, for all $\alpha<1/5$. This
threshold implies the particular energy scaling bounds in Theorem \ref{th_general_scaling}.
If we had $v\in W^{2,2}$ and $w\in W^{1,2}$ satisfying the same
equation, then $\inf I^h$ may be further decreased. Indeed, in
\cite{LOP} we showed that existence of $v\in W^{2,2}(\omega,\R)$ with
$\det\nabla^2 v=-\mbox{curl}\,\mbox{curl}\,S_{2\times 2}$ yields
$\inf I^h\leq Ch^{\alpha/2+2}$, for any $\alpha\in (2,4)$ and
$\gamma=\alpha/4$. Naturally, this bound is superior in any of the cases (i)-(iii).

\begin{figure}[htbp]
\centering
\includegraphics[scale=0.25]{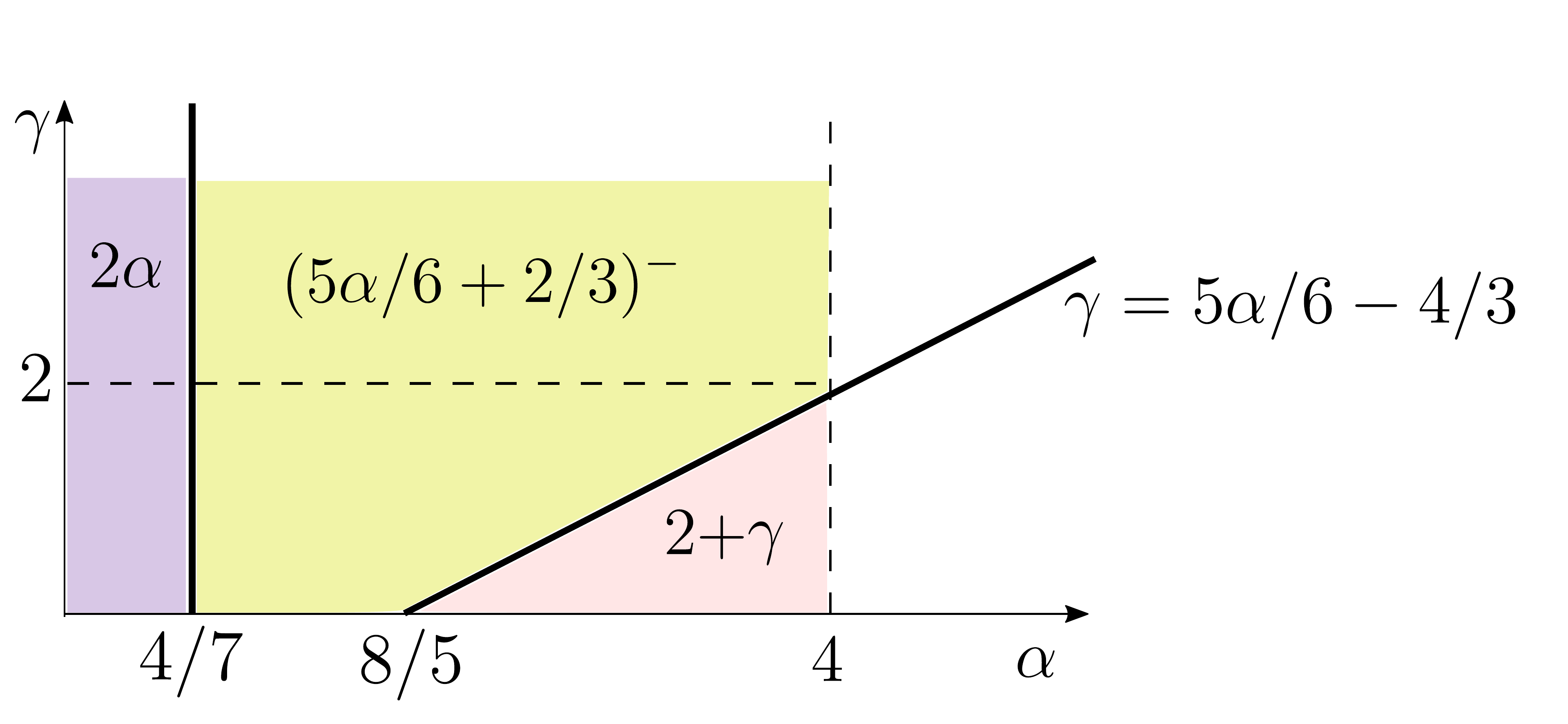}
\caption{{Bounding exponents of $\inf I^h$ in Theorem \ref{th_general_scaling} (i), (ii), (iii).}}
\label{fig:exponents}
\end{figure}

\subsection{Gamma-convergence: case $\mathbf{S_{2\times 2}\equiv 0}$
  and $\mathbf{\alpha,\gamma\geq 2}$}

Our next result concerns scaling of the bending prestrain
component $h^{\gamma/2}x_3B$ as in Theorem \ref{th_main1} (i.e. at least $h^2$), but
allowing for the stretching component $h^{\alpha/2}S$ in the
out-of-plane directions to be of order $h$, provided that the leading
order in-plane prestrain vanishes.

\begin{theorem}\label{th_main15} 
Let $S_{2\times 2}\equiv 0$ in $\omega$. For every $\alpha, \gamma\geq 2$ we have the
following energy scaling and $\Gamma$-convergence results.
\begin{enumerate}[leftmargin=8mm]
\item[(i)] When $\alpha=2$ then $\,\inf I^h\leq Ch^4$, and
  $h^{-4}I^h\overset{\Gamma}{\longrightarrow} I$ where: 
\begin{equation*}
\begin{split}
I(v,w) = & \; \frac{1}{24}\int_\omega \mathcal{Q}_2\big(\nabla^2v - 
2 \,\mathrm{sym} \nabla (S_{31}, S_{32}) +
\left\{\begin{array}{ll} 0 & \mbox{for } \; \gamma>2\\ B_{2\times
      2} &\mbox{for } \; \gamma =2\end{array}\right. \Big) \dxp 
\vspace{1mm} \\ & + \frac{1}{2}\int_\omega\mathcal{Q}_2\Big(\mathrm{sym}\,\nabla w +
\frac{1}{2}(\nabla v)^{\otimes 2}  - \frac{1}{2}(S_{31}, S_{32})^{\otimes 2} \Big)\dxp.
\end{split}
\end{equation*}
 
\item[(ii)] When $\gamma=2<\alpha$ then $\,\inf I^h\leq
  Ch^{4}$, and $h^{-4}I^h\overset{\Gamma}{\longrightarrow} I$ where:
$$I(v,w) = \frac{1}{24}\int_\omega \mathcal{Q}_2\big(\nabla^2v + B_{2\times 2}\big)\dxp +
\frac{1}{2}\int_\omega\mathcal{Q}_2\Big(\mathrm{sym}\,\nabla w +
\frac{1}{2} (\nabla v)^{\otimes 2}\Big)\dxp.$$
 
\item[(iii)] When $\alpha, \gamma >2$ then $\,\inf I^h\leq
  Ch^{2+\alpha\wedge \gamma}$, and $h^{-(2+\alpha\wedge\gamma)}I^h\overset{\Gamma}{\longrightarrow} I$ where:
\begin{equation*}
\begin{split}
& I(v,w) =  \frac{1}{2}\int_\omega\mathcal{Q}_2\big(\mathrm{sym}\,\nabla w \big)\dxp
\vspace{1mm} \\ & \quad + \frac{1}{24}\int_\omega \mathcal{Q}_2\Big(\nabla^2 v - 
\left\{\begin{array}{ll} 0 &\mbox{for }\; \gamma<\alpha\\ 2\,
    \mathrm{sym} \nabla (S_{31}, S_{32}) &\mbox{for }\; \alpha\leq\gamma\end{array}\right. +
\left\{\begin{array}{ll} 0 & \mbox{for } \; \alpha<\gamma\\ B_{2\times
      2} &\mbox{for } \; \gamma\leq\alpha\end{array}\right. \Big) \dxp.
\end{split}
\end{equation*}
\end{enumerate}
All $\Gamma$-limits $I(v,w)$ above are defined on the scalar out-of-plane
displacements $v\in W^{2,2}(\omega, \R)$ and the in-plane displacements
$w\in W^{1,2}(\omega, \R^2)$. 
The $\Gamma$-convergences are with respect to the compactness
statements in (\ref{pomoc}), which are valid in each corresponding
scaling regime, and with $\delta=2$ in case (i) and (ii), and $\delta=\alpha\wedge\gamma$ in case (iii).
\end{theorem}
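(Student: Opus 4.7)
The plan is to follow the standard dimension-reduction toolbox for prestrained films, with the same architecture as the proof of Theorem \ref{th_main1}, while carefully tracking how the out-of-plane components $(S_{31}, S_{32})$ of the leading stretching prestrain propagate into both the limiting stretching and bending quadratic forms. The hypothesis $S_{2\times 2}\equiv 0$ is precisely what allows $\alpha$ to drop to $2$: the in-plane contribution to the Cauchy--Green tensor $(A^h)^T A^h$ coming from $(S_{31}, S_{32})$ enters only at order $h^{\alpha}$, through the term $(S_{31}, S_{32})^{\otimes 2}$, so it remains balanced with the other terms in the expansion of the energy exactly when $\alpha=2$. This is parallel to the shallow shell theory of \cite{lemapa2}, where $(S_{31}, S_{32})$ plays the role of $\nabla V$ for a shallow midsurface of the form $x'+(0,0,V(x'))$.

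For the compactness (\ref{pomoc}) in each regime, I would apply the Friesecke--James--M\"uller geometric rigidity estimate on the rescaled domain $\omega\times(-\tfrac{1}{2},\tfrac{1}{2})$, to construct a piecewise-constant field of rotations $R^h(x')\in SO(3)$ approximating the rescaled gradient $\nabla_h y^h=(\partial_1 y^h, \partial_2 y^h, h^{-1}\partial_3 y^h)$, with quantitative control dictated by the energy bound $I^h\leq C h^{2+\delta}$. Combining this with a Korn-type argument then yields, up to a subsequence, the out-of-plane displacement $v\in W^{2,2}(\omega,\R)$ and in-plane displacement $w\in W^{1,2}(\omega,\R^2)$, with $\delta=2$ in cases (i) and (ii), and $\delta=\alpha\wedge\gamma$ in case (iii). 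The scheme follows \cite{FJM,lemapa1}.

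For the lower bound I would express $W((\nabla u^h)(A^h)^{-1})$ in terms of the rescaled symmetric strain
$$G^h=\frac{1}{h^{1+\delta/2}}\,\mathrm{sym}\big((R^h)^T\nabla_h y^h\,(A^h)^{-1}-Id_3\big),$$
and invoke the $\mathcal{C}^2$ expansion of $W$ near $SO(3)$, together with the positivity in (\ref{Wpro}), to bound $h^{-(2+\delta)}I^h$ from below, up to lower-order errors, by the integral of a positive-definite quadratic form on $\R^{3\times 3}_{sym}$ applied to the weak limit of $G^h$. Pointwise minimization in the third row/column reduces that form to $\mathcal{Q}_2$ acting on the effective $2\times 2$ strain. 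The novel bookkeeping, compared to Theorem \ref{th_main1}, is the effect of $(S_{31},S_{32})$: expanding $(A^h)^{-1}=Id_3-h^{\alpha/2}S-h^{\gamma/2}x_3 B+h^{\alpha}S^2+\ldots$ and carrying the out-of-plane components of $S$ produces, through their coupling with $\partial_3 y^h$ and with the in-plane gradient of $y^h$, the shift $-\tfrac{1}{2}(S_{31},S_{32})^{\otimes 2}$ in the effective stretching strain and the shift $2\,\mathrm{sym}\,\nabla(S_{31},S_{32})$ in the effective bending strain, in the three regimes exactly as stated.

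For the upper bound I would construct a recovery sequence through a modified Kirchhoff--Love ansatz incorporating a shallow tilt that absorbs $(S_{31},S_{32})$ at leading order:
\begin{multline*}
u^h(x',x_3) = x' + h^{\delta/2}\big(0,0,v(x')\big) + h^{1+\delta/2}\big(w_1(x'),w_2(x'),0\big) \\
+ h x_3\, d_0(x') + h^{1+\delta/2} x_3\, d_1(x') + h x_3^2\, d_2(x'),
\end{multline*}
where $d_0$ is chosen so that $\nabla u^h$ matches $A^h$ in the transverse direction at leading order (thereby encoding the tilt from $(S_{31},S_{32})$ into the Kirchhoff normal), and $d_1,d_2$ are the usual warping and secondary corrections that diagonalize the $\R^{3\times 3}$ quadratic form down to $\mathcal{Q}_2$. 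The main obstacle, which distinguishes this theorem from Theorem \ref{th_main1}, is choosing $d_0$ so as to produce simultaneously the $(S_{31},S_{32})^{\otimes 2}$ contribution to the limit stretching and the $\nabla(S_{31},S_{32})$ contribution to the limit bending, while keeping the whole $\nabla u^h (A^h)^{-1}$ within $O(h^{1+\delta/2})$ of the rotation field $Id_3+h^{\delta/2}\omega+\tfrac{1}{2}h^{\delta}\omega^2+\ldots$. By density one first treats smooth $v,w,S,B$ and then extends to the general case by approximation, matching the $\limsup$ to the lower bound in each of the three regimes.
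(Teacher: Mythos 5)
Your proposal follows essentially the same route as the paper: compactness and the lower bound via the rigidity/rotation-field argument and identification of the limiting strain (Lemmas \ref{lem_com}--\ref{lem_com2} and Corollary \ref{cor_S0}, where the coupling of $(S_{31},S_{32})$ with the limiting skew field $P$ produces exactly the shifts $-\tfrac12(S_{31},S_{32})^{\otimes 2}$ and $2\,\mathrm{sym}\nabla(S_{31},S_{32})$ you describe), and the upper bound via the tilted Kirchhoff--Love ansatz with warping corrections and a two-step rotation reducing $\mathcal{Q}_3$ to $\mathcal{Q}_2$ (subsection \ref{addi}). The only caveat is that your director term $h\,x_3\,d_0$ should, as in (\ref{recseq5}), carry the two distinct powers $h^{\alpha/2}x_3(2s,S_{33})^T$ and $-h^{\delta/2}x_3(\nabla v,0)^T$ (note the factor $2$ needed to realize $\mathcal{G}^h_{a3}=2h^{\alpha/2}S_{a3}$ up to a rotation), which coincide with $h\,x_3$ only when $\alpha=\delta=2$.
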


We note that in cases (i) and (iii), the apparent stretching
components $S$ of $A^h$ contribute to the bending term and 
are mixed with the original bending components $B$. In fact,
the limiting functionals in those cases are of the same type as derived
in \cite[Theorem 5.2]{lemapa2} where we considered the prestrained
shallow shells. There, we assumed that the reference domains
$\Omega^h$ were configured around the mid-surfaces $\{x'+hv_0(x');~
x'\in\omega\}$ rather than the flat mid-plate
$\omega$. Taking $(S_{31}, S_{32})=\nabla v_0$ leads to the
same energy $I(v,w)$, in which both bending and stretching
are relative to the matching order tensors derived from $v_0$.
We point out that the presence of similar tensors also occurred in
\cite{LMP} in the context of shells with varying thickness.

We also check that results of Theorem \ref{th_main15} are stronger
than the general energy bounds in Theorem \ref{th_general_scaling},
which are however valid for any nonzero $S_{2\times 2}$.
Since $\alpha, \gamma\geq 2$, it follows that $\inf I^h\leq C
h^{(5\alpha/6+2/3)-}$ whenever $\alpha<4$. This leads to:
$h^{7/3-}$ in case (i), which is a bound indeed inferior to $h^4$. 
Likewise, in case (iii) we get: $h^{(5\alpha/6+2/3)-}\gg h^{2+\alpha \wedge\gamma}$.

Finally, observe that the in-plane displacement $w$ is slaved to $S,B$ and
$v$, and can be omitted by replacing the $\Gamma$-limits $I(v,w)$ by $\bar I(v)$:
\begin{corollary}\label{coro2}
In the context of Theorem \ref{th_main15}, assume additionally that
$\omega$ is simply connected. Then each stretching term may be replaced
by the following squared distance from the space $\big\{\mathrm{sym}\nabla
w;~ w\in W^{1,2}(\omega,\R^2)\big\}$, where $\delta=2$ in cases (i), (ii) and $\delta = \alpha\wedge \gamma$
in case (iii):
$$\frac{1}{2}\min_{w\in W^{1,2}(\omega, \R^2)}\int_\omega\mathcal{Q}_2\Big(\mathrm{sym}\,\nabla w +
\left\{\begin{array}{ll} 0 & \mbox{ for }\; \delta>2\\\frac{1}{2}
    (\nabla v)^{\otimes 2} & \mbox{ for }\; \delta=2\end{array}\right. - \left\{\begin{array}{ll} 
0 & \mbox{ for }\; \alpha>2\\ \frac{1}{2}(S_{31}, S_{32})^{\otimes 2} & \mbox{ for }\;
\alpha=2 \end{array}\right. \Big)\dxp.$$
Consequently, we have the equivalences of $\Gamma$-limits
$\bar I(v)$ defined on $v\in W^{2,2}(\omega, \R)$:
\begin{enumerate}
\item[(i)] If  $\alpha=2$, $\gamma>2$ then $h^{-4}I^h\overset{\Gamma}{\longrightarrow} \bar I$, where: 
$$\bar I(v)  \simeq \|\nabla^2v - 2\,\mathrm{sym}\nabla (S_{31}, S_{32})\|^2_{L^2(\omega)}+\|\det\nabla^2v-
\frac{1}{2}\mathrm{curl}^T\mathrm{curl}  (S_{31}, S_{32})^{\otimes 2}\|^2_{H^{-2}(\omega)}.$$
If $\alpha=\gamma=2$ then $h^{-4}I^h\overset{\Gamma}{\longrightarrow} \bar I$, where:
$$\bar I(v)  \simeq \|\nabla^2v - 2\,\mathrm{sym}\nabla (S_{31}, S_{32})
+B_{2\times 2}\|^2_{L^2(\omega)}+\|\det\nabla^2v-
\frac{1}{2}\mathrm{curl}^T\mathrm{curl}  (S_{31}, S_{32})^{\otimes 2}\|^2_{H^{-2}(\omega)}.$$

\item[(ii)] If $\gamma=2<\alpha$ then
  $h^{-4}I^h\overset{\Gamma}{\longrightarrow} \bar I$, where:
$\bar I(v) \simeq \|\nabla^2v +B_{2\times
  2}\|^2_{L^2(\omega)}+\|\det\nabla^2v\|^2_{H^{-2}(\omega)}.$

\item[(iii)] If $\alpha, \gamma >2$ then
  $h^{-(2+\alpha\wedge\gamma)}I^h\overset{\Gamma}{\longrightarrow} \bar{\bar I}$, where:
$$\bar{\bar I} \equiv \min I \simeq\left\{\begin{array}{ll} 
\|\mathrm{curl} B_{2\times 2}\|^2_{H^{-1}(\omega)} & \mbox{ for } \gamma<\alpha\vspace{1mm}\\
\|\nabla\mathrm{curl}  (S_{31}, S_{32})\|^2_{H^{-1}(\omega)} & \mbox{
  for }\gamma>\alpha \vspace{1mm}\\
\|\mathrm{curl} B_{2\times 2}- \nabla\mathrm{curl}  (S_{31}, S_{32})\|^2_{H^{-1}(\omega)} & \mbox{ for } \gamma=\alpha.\\
\end{array}\right.$$ 
\end{enumerate}
\end{corollary}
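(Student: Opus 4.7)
The plan is to follow the template of Corollary \ref{coro1}, adapting it to the effective tensors arising in Theorem \ref{th_main15} under the assumption $S_{2\times 2}\equiv 0$. First, for each fixed $v\in W^{2,2}(\omega,\R)$, the stretching integral in each of the three cases takes the form $\frac{1}{2}\int_\omega\mathcal{Q}_2(\mathrm{sym}\,\nabla w + F_v)\dxp$ for an explicit symmetric tensor $F_v$ assembled from $\nabla v$ and $(S_{31},S_{32})$. The first displayed identity of the corollary then follows by direct minimization over $w\in W^{1,2}(\omega,\R^2)$, matching case by case with Theorem \ref{th_main15}.

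To pass from the $\mathcal{Q}_2$-distance to an $H^{-2}$ expression, I would invoke the Hodge-type splitting of symmetric $L^2$ tensor fields on a simply connected planar $\omega$: any such $X$ decomposes orthogonally as $X=\mathrm{sym}\,\nabla w+\mathrm{cof}\,\nabla^2\phi$, with $\Delta^2\phi=\mathrm{curl}^T\mathrm{curl}\,X$. Combined with $\mathcal{Q}_2(M)\simeq|\mathrm{sym}\,M|^2$ (from the Legendre-Hadamard bound on the stiffness tensor associated to $\mathcal{Q}_2$) and biharmonic elliptic regularity $\|\nabla^2\phi\|_{L^2}\simeq\|\Delta^2\phi\|_{H^{-2}(\omega)}$, this yields
$$\min_{w\in W^{1,2}(\omega,\R^2)}\int_\omega\mathcal{Q}_2(\mathrm{sym}\,\nabla w + X)\dxp\;\simeq\;\|\mathrm{curl}^T\mathrm{curl}\,X\|^2_{H^{-2}(\omega)}.$$
The explicit forms in (i) and (ii) then follow from the pointwise identity $\mathrm{curl}^T\mathrm{curl}\bigl(\frac{1}{2}\phi^{\otimes 2}\bigr)=-\det\nabla\phi$, applied to $\phi=\nabla v$ (giving $-\det\nabla^2v$) and to $\phi=(S_{31},S_{32})$ (giving the term $\frac{1}{2}\mathrm{curl}^T\mathrm{curl}(S_{31},S_{32})^{\otimes 2}$ with the correct sign).

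In case (iii), the stretching integral is $v$-independent and is set to zero by $w\equiv 0$, while the bending integral has the form $\frac{1}{24}\int_\omega\mathcal{Q}_2(\nabla^2 v + G)\dxp$ with $G$ an explicit linear expression in $B_{2\times 2}$ and $\mathrm{sym}\,\nabla(S_{31},S_{32})$ depending on whether $\gamma<\alpha$, $\gamma>\alpha$ or $\gamma=\alpha$. Minimizing over $v$, I would appeal to the analogous equivalence on simply connected $\omega$, where $\{\nabla^2 v;\,v\in W^{2,2}\}$ is the kernel of the row-wise $\mathrm{curl}$ on symmetric tensor fields:
$$\min_{v\in W^{2,2}(\omega,\R)}\int_\omega\mathcal{Q}_2(\nabla^2 v + G)\dxp\;\simeq\;\|\mathrm{curl}\,G\|^2_{H^{-1}(\omega)}.$$
The direct two-line computation $\mathrm{curl}(2\,\mathrm{sym}\,\nabla\phi)=\nabla\,\mathrm{curl}\,\phi$ then converts the three subcases into $\|\mathrm{curl}\,B_{2\times 2}\|^2_{H^{-1}}$, $\|\nabla\,\mathrm{curl}(S_{31},S_{32})\|^2_{H^{-1}}$, and the mixed expression, as stated.

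The main obstacle is bookkeeping across all the subcases and keeping track of the signs and factors of $\frac{1}{2}$ that appear once $\mathrm{curl}^T\mathrm{curl}$ and $\mathrm{curl}$ act on the quadratic expressions $(\nabla v)^{\otimes 2}$ and $(S_{31},S_{32})^{\otimes 2}$. The underlying Hodge-type equivalences are classical consequences of elliptic regularity for the Laplacian and biharmonic operators on simply connected planar domains, and require no new machinery beyond what was already used in the proof of Corollary \ref{coro1}.
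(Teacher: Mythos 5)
Your proposal follows essentially the same route as the paper: the two Hodge‑type splittings you invoke are precisely Lemmas \ref{lem_dist_hes} and \ref{lem_dist_symgrad} of Section \ref{sec_Helm}, proved there by the same variational/duality argument you sketch, and the passage from the $\mathcal{Q}_2$‑minimum to the $L^2$‑distance uses the positive definiteness of $\mathcal{Q}_2$ on $\R^{2\times 2}_{sym}$ (which comes from the coercivity assumption in (\ref{Wpro}), not from a Legendre--Hadamard condition, which alone would be too weak to give the two‑sided bound). One caveat: the ``pointwise identity'' $\mathrm{curl}^T\mathrm{curl}\big(\frac12\phi^{\otimes 2}\big)=-\det\nabla\phi$ is valid only for curl‑free $\phi$; for a general field $s=(S_{31},S_{32})$ one has instead $\frac12\mathrm{curl}^T\mathrm{curl}\,(s^{\otimes 2})=3\det\nabla's-\langle\nabla'\mathrm{curl}\,s,\,s^\perp\rangle-4\det(\mathrm{sym}\,\nabla's)$, as recorded in Section \ref{sec_due}. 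Since in cases (i)--(ii) you only apply the identity to $\phi=\nabla v$ and keep the $S$‑term in its $\mathrm{curl}^T\mathrm{curl}$ form, your final formulas are unaffected, but the identity should not be asserted for arbitrary $\phi$.
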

We remark that a bound on $\inf\bar I$ in cases (i) and (ii) above may
be deduced from Proposition \ref{infbd}, again consistent with Theorem \ref{th_main150}.



\subsection{Identification of the optimal scaling regimes}\label{identi_curv}

Our final set of results concerns the optimality of the energy
scalings implied by Theorems \ref{th_main1} and \ref{th_main15} and
their connection to curvature of the prestrains $A^h$. Namely, in the
setting of Theorem \ref{th_main1}, we get:

\begin{theorem}\label{th_main10}
Let $\alpha\geq 4$, $\gamma\geq 2$ and assume that $\omega$ is simply connected.
\begin{enumerate}[leftmargin=8mm]
\item[(i)] When $\gamma=2$ then $c h^4\leq \inf I^h\leq Ch^4$ with $c>0$, if and only if: 
$$\mathrm{curl}\, B_{2\times 2}\not\equiv 0, \quad \mbox{ or }\quad 
\det B_{2\times 2} + \left\{\begin{array}{ll}
0 & \mbox{ for }\; \alpha>4\\ \mathrm{curl}^T\mathrm{curl}\,S_{2\times
  2} & \mbox{ for }\; \alpha=4\end{array}\right. \not\equiv 0 \quad
\mbox{ in }\;\omega.$$
 
\item[(ii)] When $\gamma\in (2,\alpha-2]$ then $ch^{2+\gamma}\leq \inf I^h\leq
  Ch^{2+\gamma}$ with $c>0$, if and only if: 
$$\mathrm{curl}\, B_{2\times 2}\not\equiv 0, \quad \mbox{ or }\quad 
\gamma=\alpha -2 \;\mbox{ and }\; \mathrm{curl}^T\mathrm{curl}\,S_{2\times 2} \not\equiv 0 
\quad \mbox{ in }\;\omega.$$

\item[(iii)] When $\gamma >\alpha-2$ then $ch^{\alpha}\leq \inf I^h\leq
  Ch^{\alpha}$ with $c>0$, if and only if:
$$ \mathrm{curl}^T\mathrm{curl}\,S_{2\times 2} \not\equiv 0 \quad \mbox{ in }\; \omega.$$ 
\end{enumerate}
\end{theorem}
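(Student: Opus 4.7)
The plan is to deduce Theorem~\ref{th_main10} directly from Theorem~\ref{th_main1} combined with Corollary~\ref{coro1} and the standard compatibility properties of symmetric tensor fields on simply connected planar domains. The upper bounds $\inf I^h\leq Ch^p$ in each of the three regimes (with $p\in\{4,\,2+\gamma,\,\alpha\}$) are already provided by Theorem~\ref{th_main1}, so the substance of Theorem~\ref{th_main10} lies in identifying when the matching lower bound $\inf I^h\geq ch^p$ with some $c>0$ holds.

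First I would combine the compactness (\ref{pomoc}) with the $\Gamma$-liminf and recovery-sequence inequalities of Theorem~\ref{th_main1} to conclude
\begin{equation*}
\lim_{h\to 0} h^{-p}\inf I^h \;=\; \min_{(v,w)\in W^{2,2}(\omega,\R)\times W^{1,2}(\omega,\R^2)} I(v,w),
\end{equation*}
so that $\inf I^h\simeq h^p$ is equivalent to $\min I(v,w)>0$. Corollary~\ref{coro1} then slaves away the in-plane displacement $w$: by simple connectedness of $\omega$, the minimum over $w$ of the stretching contribution may be rewritten (up to $\omega$-dependent constants) as the squared $H^{-2}$ norm of $\mathrm{curl}^T\mathrm{curl}$ applied to the residual symmetric $2\times 2$ tensor. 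In this way each case reduces to deciding, for which $S$ and $B$, the functional $\bar I(v)$ (or the constant $\bar{\bar I}$ in case (iii)) admits a zero infimum.

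The last ingredient consists of two elementary solvability facts on a simply connected $\omega$: a symmetric $2\times 2$ field $T$ equals $\nabla^2 v$ for some $v\in W^{2,2}(\omega,\R)$ iff $\mathrm{curl}\, T\equiv 0$, and equals $\mathrm{sym}\,\nabla w$ for some $w\in W^{1,2}(\omega,\R^2)$ iff $\mathrm{curl}^T\mathrm{curl}\, T\equiv 0$. Applying these: in case (i), $\inf\bar I=0$ forces $\nabla^2 v=-B_{2\times 2}$ (whence $\mathrm{curl}\, B_{2\times 2}\equiv 0$) together with, using the 2D identity $\det(-B_{2\times 2})=\det B_{2\times 2}$, the requirement $\det B_{2\times 2}+\mathrm{curl}^T\mathrm{curl}\,S_{2\times 2}\equiv 0$ when $\alpha=4$ (respectively $\det B_{2\times 2}\equiv 0$ when $\alpha>4$). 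In case (ii) the Hessian compatibility again yields $\mathrm{curl}\, B_{2\times 2}\equiv 0$, and the stretching part contributes $\mathrm{curl}^T\mathrm{curl}\,S_{2\times 2}\equiv 0$ only in the borderline $\gamma=\alpha-2$. In case (iii) with $\alpha=4$ one has $\nabla^2 v\equiv 0$, so $v$ is affine and $\det\nabla^2 v\equiv 0$, leaving only $\mathrm{curl}^T\mathrm{curl}\,S_{2\times 2}\equiv 0$; for $\alpha>4$ the same conclusion reads off directly from the formula for $\bar{\bar I}$. Negating each combined zero-condition reproduces the dichotomies in (i)--(iii). The minor technical subtlety, namely that $\inf\bar I=0$ need not be attained, is handled by noting that along $v_n$ with $\bar I(v_n)\to 0$ the $L^2$ closedness of Hessians gives $\nabla^2 v_n\to -B_{2\times 2}$ strongly, and the quadratic form $\det\nabla^2 v_n$ then converges in $H^{-2}$ to the determinant of the limit, so the characterizations persist for the infimum.

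The main obstacle is organizational rather than analytic: one must keep track, across the several $(\alpha,\gamma)$-subregimes, of exactly which residual stretching and bending tensors survive in $I(v,w)$ after the Corollary~\ref{coro1} reduction, and verify in each case that their simultaneous vanishing coincides with the stated curl and determinant conditions. All remaining work reduces to the two compatibility facts above, together with the planar determinant identity $\det(-T)=\det T$.
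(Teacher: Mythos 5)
Your proposal is correct and follows essentially the same route as the paper: the upper bounds come from Theorem \ref{th_main1}, the matching lower bound is reduced via $\Gamma$-convergence and Corollary \ref{coro1} to deciding whether $\inf\bar I>0$, and that decision rests on the two decomposition/equivalence results of Section \ref{sec_Helm} (Lemmas \ref{lem_dist_hes} and \ref{lem_dist_symgrad}) together with the quantitative control of $\det\nabla^2 v$ in $H^{-2}$ along minimizing sequences, exactly as in Proposition \ref{infbd}. Your observation that non-attainment is harmless because $\det\nabla^2 v_n\to\det \bar B$ in $H^{-2}$ when $\nabla^2 v_n\to-\bar B$ in $L^2$ is precisely the estimate (\ref{grgr}) used there.
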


Consider now the following Riemann metrics on $\Omega^h$:
\begin{equation}\label{metric}
\G^h= (A^h)^TA^h = Id_3 + 2h^{\alpha/2}S + h^\alpha S^2 +
x_3\big(2h^{\gamma/2}B + 2 h^{(\alpha+\gamma)/2}\mbox{sym}(SB)\big) +
x_3^2h^\gamma B^2.
\end{equation}
We calculate the lowest order terms in the six Riemann
curvatures of $\G^h$ on $\omega$, at $x_3=0$:
\begin{equation}\label{curv_1}
\begin{split}
& R_{12, 12}\simeq - h^{\alpha/2} \mbox{curl}\,\mbox{curl}\,S_{2\times 2} -h^\gamma \det B_{2\times 2},\\
& R_{12, 13}\simeq - h^{\alpha/2} \partial_1\mbox{curl}(S_{13}, S_{23}) + h^{\gamma/2}(\mbox{curl}B_{2\times 2})_1, \\
& R_{12, 23}\simeq - h^{\alpha/2} \partial_2\mbox{curl}(S_{13}, S_{23}) + h^{\gamma/2}(\mbox{curl}B_{2\times 2})_2, \\
& \qquad R_{13, 13}\simeq - h^{\alpha/2} \partial^2_{11} S_{33} +
2h^{\gamma/2} \partial_1 B_{13},\\
& \qquad R_{13, 23}\simeq - h^{\alpha/2}\partial^2_{12} S_{33} +
h^{\gamma/2}\big(\partial_1 B_{23}+\partial_2 B_{13}\big),\\
& \qquad R_{23, 23}\simeq - h^{\alpha/2}\partial^2_{22} S_{33} + 2h^{\gamma/2}\partial_2 B_{23}.\\
\end{split}
\end{equation}
In all cases indicated in Theorem \ref{th_main10}, both the optimality conditions and the energy scaling
orders can be read from the first three curvatures above. These are: $R_{12,12}$
corresponding to stretching, and $R_{12,13}, R_{12,23}$ corresponding to
bending. Indeed, when $\gamma=2,\alpha=4$ we have $R_{12, 12}\simeq -
h^2(\mbox{curl}\,\mbox{curl}\,S_{2\times 2} +\det
B_{2\times 2})$ and $(R_{12, 13}, R_{12, 23})\simeq
h\,\mbox{curl}\,B_{2\times 2}$, which are the two tensors displayed in case
(i). When $\gamma=2,\alpha>4$ the only difference is
that $R_{12, 12}\simeq - h^2\det B_{2\times 2}$, and in both cases 
$\inf I^h$ is of order equal to the square of the stretching order,
i.e. $h^4$. When $\gamma>2, \alpha=\gamma+2$ we have $R_{12, 12}\simeq -
h^{\gamma/2+1}\mbox{curl}\,\mbox{curl}\,S_{2\times 2}$ and $(R_{12, 13},R_{12, 23})\simeq
h^{\gamma/2}\,\mbox{curl}\,B_{2\times 2}$, in agreement with case (ii). For $\gamma>2,\alpha<\gamma+2$ the
stretching-related curvature $R_{12, 12}$ has order $h^{\gamma\wedge \frac{\alpha}{2}}$
which is strictly less than the bending-compatible order
$h^{1+\gamma/2}$, so this term becomes irrelevant. The energy order
in both cases equals the square of the compatible stretching order,
i.e. $h^{\gamma+2}$. Finally, when $\gamma>\alpha-2$ we have: 
$R_{12, 12}\simeq - h^{\alpha/2} \mbox{curl}\,\mbox{curl}\,S_{2\times
  2}$, while $(R_{12, 13},R_{12, 23})$ has order $h^{\frac{\gamma}{2}\wedge \frac{\alpha}{2}}$
which is strictly less than the stretching-compatible 
$h^{\alpha/2 -1}$. Thus, the bending tensor is discarded on the basis
of not contributing towards the energy, which is in agreement with
case (iii).  Similarly, the energy
order equals the square of the stretching order i.e. $h^{\alpha}$.

We note that the above observations are precisely the
``small-curvature'' regime counterparts of the findings in
\cite{LRR}. There, the authors considered a constant prestrain
$A=A(x', x_3)$ independent of $h$. They proved that the optimal energy
scaling, which is $h^2$, is superseded (i.e. $h^{-2}\inf
I^h\to 0$ as $h\to 0$) if and only if $ R_{12, 12}=R_{12, 13}=R_{12, 23}\equiv 0$ on
$\omega$. The next viable scaling is then $h^4$, corresponding to the
three remaining curvatures $ R_{13, 13}, R_{13, 23}, R_{23, 23}$. 
Further, the vanishing of these implies the energy scaling order $h^6$,
and in general one has an infinite hierarchy of the limiting theories
each valid in an appropriate regime of vanishing of components of
$R_{\cdot\cdot, \cdot\cdot}$ together with their covariant derivatives \cite{Lew_last}.

We conjecture that the next viable energy scalings after
those proved in Theorem \ref{th_main10} are: $h^6$ in case (i),
$h^{(4+\gamma)\wedge 2\gamma}$ in case (ii) for $\gamma=\alpha-2$,
$h^{(4+\gamma)\wedge 2\gamma\wedge \alpha}$ in case (ii) for $\gamma<\alpha-2$, 
and $h^{(2+\gamma)\wedge (2+\alpha)}$ in case (iii). By analogy, vanishing of
the lowest order terms of $R_{13, 13}, R_{13, 23}, R_{23, 23}$ given in (\ref{curv_1})
should then be responsible for even higher energy scalings. 

\medskip

We begin the discussion of optimality in the setting of Theorem
\ref{th_main15} by stating:

\begin{theorem}\label{th_main150} 
Let $S_{2\times 2}\equiv 0$ in $\omega$ and let $\alpha, \gamma\geq 2$. 
Assume that $\omega$ is simply connected.
\begin{enumerate}[leftmargin=8mm]
\item[(i)] When $\alpha=2$ then $ch^4\leq \inf I^h\leq Ch^4$  with $c>0$, if and only if: 
\begin{equation*}
\begin{split}
&\mbox{when } \gamma>2: \quad \nabla' \, \mathrm{curl}  (S_{31},
S_{32})\not\equiv 0 \quad \mbox{ or }\quad \det\nabla' ( S_{31}, S_{32}) \not\equiv 0 \quad \mbox{ in }\;\omega, \\ 
&\mbox{when } \gamma=2: \quad \mathrm{curl}\, B_{2\times 2} -\nabla' \, \mathrm{curl}  (S_{31},
S_{32})\not\equiv 0 \quad \\ & \qquad\qquad \qquad \; \mbox{ or }
\det\big(B_{2\times 2} - 2\,\mathrm{sym}\nabla' (S_{31}, S_{32}) 
\big) + \frac{1}{2}\mathrm{curl}^T\mathrm{curl}(S_{31}, S_{32})^{\otimes 2}\not\equiv 0.
\end{split}
\end{equation*}

\noindent Equivalently, the last condition above may be rewritten as:
$$ 3 \det\nabla'(S_{31}, S_{32})
- \big\langle\nabla'\,\mathrm{curl}(S_{31}, S_{32}), (S_{31},
S_{32})^\perp\big\rangle- 2\big\langle \mathrm{cof}\, B_{2\times 2}:
\nabla' (S_{31}, S_{32})\big\rangle + \det B_{2\times 2} \not\equiv 0.$$

\item[(ii)] When $\gamma=2<\alpha$ then $ch^4\leq \inf I^h\leq Ch^{4}$ with $c>0$, if and only if: 
$$\mathrm{curl}\, B_{2\times 2}\not\equiv 0 \quad \mbox{ or }\quad 
\det B_{2\times 2}\not\equiv 0  \quad \mbox{ in }\;\omega.$$
 
\item[(iii)] When $\alpha, \gamma >2$ then $ch^{2+\alpha\wedge \gamma}\leq \inf I^h\leq
  Ch^{2+\alpha\wedge \gamma}$ with $c>0$, if and only if:
\begin{equation*}
\begin{split}
\mathrm{curl}\, B_{2\times 2}\not\equiv 0 \;\; \mbox{ and } \;\; \gamma<\alpha,
\quad & \mbox{ or }\quad \nabla' \,\mathrm{curl}(S_{31}, S_{32})\not\equiv 0 \;\; \mbox{ and } \;\; \gamma>\alpha,
\\ & \mbox{ or } \quad \mathrm{curl}\, B_{2\times 2} - \nabla' \,\mathrm{curl}(S_{31}, S_{32})
\not\equiv 0  \;\;\mbox{ and } \;\; \gamma=\alpha.
\end{split}
\end{equation*}
\end{enumerate}
\end{theorem}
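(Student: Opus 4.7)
Plan: Theorem \ref{th_main15} already supplies the upper bounds $\inf I^h \leq C h^{\delta_*}$ with $\delta_* = 4$ in (i), (ii) and $\delta_* = 2+\alpha\wedge\gamma$ in (iii), so my task is to decide when the matching lower bound $\inf I^h \geq c h^{\delta_*}$ holds. The plan is to combine the $\Gamma$-convergence $h^{-\delta_*}I^h\overset{\Gamma}{\to}I$ from Theorem \ref{th_main15} with the equi-coercivity encoded in (\ref{pomoc}) to conclude
$$\lim_{h\to 0}h^{-\delta_*}\inf I^h \;=\; \min_{(v,w)\in W^{2,2}(\omega,\R)\times W^{1,2}(\omega,\R^2)}I(v,w),$$
so that the desired two-sided scaling is equivalent to $\min I > 0$. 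The whole problem thus reduces to characterizing those $(S,B)$ with $S_{2\times 2}\equiv 0$ for which $\min I = 0$.

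Since $\mathcal{Q}_2$ is positive definite on $\R^{2\times 2}_{sym}$, I would next argue that $I(v,w)=0$ for some admissible $(v,w)$ is equivalent to pointwise annihilation of both symmetric tensors inside the integrals. Killing the bending tensor amounts to solving an equation $\nabla^2 v = T_{S,B}$, where $T_{S,B}$ is the symmetric tensor read off from the bending expression in each case (it involves $-B_{2\times 2}$, $2\,\mathrm{sym}\nabla(S_{31},S_{32})$, or their linear combination, depending on which of $\alpha,\gamma$ equal or dominate $2$). On a simply connected $\omega$, the Saint-Venant compatibility $\mathrm{curl}\,T_{S,B}\equiv 0$ is equivalent to the existence of such $v$; invoking the identity $\mathrm{curl}(\mathrm{sym}\nabla F)= \pm\tfrac{1}{2}\nabla\,\mathrm{curl}\,F$ would then yield exactly the first of the two alternatives listed in (i)--(iii).

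Killing the stretching tensor in cases (i)--(ii), which has the form $\mathrm{sym}\nabla w + \tfrac{1}{2}(\nabla v)^{\otimes 2} - U_S$, is to be handled by applying $-\mathrm{curl}^T\mathrm{curl}$ to eliminate $w$ (which annihilates all symmetric gradients on simply connected $\omega$) and by using $-\mathrm{curl}^T\mathrm{curl}\bigl(\tfrac{1}{2}(\nabla v)^{\otimes 2}\bigr)=\det\nabla^2v$ to turn the stretching constraint into a scalar Monge-Amp\`ere identity $\det\nabla^2v - \mathrm{curl}^T\mathrm{curl}\,U_S \equiv 0$; conversely, on the simply connected $\omega$ any $v$ solving this identity admits a compatible $w$ by standard potential-theoretic arguments. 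In case (iii) only $\mathrm{sym}\nabla w=0$ is required, which imposes no constraint.

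Finally, I would substitute the $\nabla^2v$ prescribed by the bending equation into this scalar identity, obtaining an algebraic relation between $S$ and $B$ alone. Case (iii) yields no further constraint, producing the three $\alpha$-vs-$\gamma$ sub-cases purely from Hessian compatibility; case (ii) yields the added requirement $\det B_{2\times 2}\equiv 0$, as in Theorem \ref{th_main10}(i). The hard part will be case (i): one must use the algebraic expansion
$$\det(2\,\mathrm{sym}\nabla F - B_{2\times 2}) = 4\det\nabla F - (\mathrm{curl}\,F)^2 - 2\langle\mathrm{cof}\,B_{2\times 2}:\nabla F\rangle + \det B_{2\times 2}$$
together with the analogous expression of $\mathrm{curl}^T\mathrm{curl}(F^{\otimes 2})$ in terms of $\det\nabla F$, $(\mathrm{curl}\,F)^2$ and $\langle F^\perp,\nabla\mathrm{curl}\,F\rangle$, in order to arrive, after cancellation, at the displayed polynomial form $3\det\nabla' F - \langle\nabla'\,\mathrm{curl}\,F, F^\perp\rangle - 2\langle\mathrm{cof}\,B_{2\times 2}:\nabla'F\rangle + \det B_{2\times 2}$. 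Verifying that this polynomial coincides with the covariant scalar $\det(B_{2\times 2}-2\,\mathrm{sym}\nabla F)+\tfrac12\mathrm{curl}^T\mathrm{curl}\,(S_{31},S_{32})^{\otimes 2}$, and tracking sign conventions coherently through the Monge-Amp\`ere reductions, is where the bookkeeping is the most delicate.
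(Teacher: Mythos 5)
Your proposal is correct and follows essentially the same route as the paper: the two-sided scaling is reduced via the $\Gamma$-convergence and compactness of Theorem \ref{th_main15} to the positivity of $\inf I$, which is then characterized through the $\mathrm{curl}$- and $\mathrm{curl}^T\mathrm{curl}$-compatibility of the bending and stretching tensors (precisely the content of the paper's Lemmas \ref{lem_dist_hes} and \ref{lem_dist_symgrad}), and your algebraic identities for $\det\big(2\,\mathrm{sym}\nabla' (S_{31},S_{32}) - B_{2\times 2}\big)$ and $\mathrm{curl}^T\mathrm{curl}\,(S_{31},S_{32})^{\otimes 2}$ reproduce the displayed polynomial condition in case (i). The one step to make explicit is that $\inf I=0$ actually forces attainment, so that ``both integrands vanish for a common $(v,w)$'' is genuinely equivalent to $\inf I=0$; this follows from the $L^2$-closedness of the spaces of Hessians and of symmetric gradients, which is exactly what the paper's quantitative decomposition lemmas (and Proposition \ref{infbd} for the coupled determinant term) supply.
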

Firstly, observe that (ii) above coincides with the statement of
Theorem \ref{th_main10} (i) when $S_{2\times 2}\equiv 0$, while (iii)
gives more precise information than Theorem \ref{th_main10} (ii) and (iii).
As before,  we may compute the lowest order terms in the Riemann
curvatures of the metric $\G^h$ in (\ref{metric}), at $x_3=0$:
\begin{equation}\label{curv_2}
\begin{split}
& R_{12, 12} \simeq h^\alpha \Big(-3 \det\nabla'(S_{31}, S_{32}) 
+ S_{13}\partial_2\mbox{curl}(S_{31}, S_{32}) - S_{32}\partial_1\mbox{curl}(S_{31}, S_{32}) \Big)
\\ & \qquad\qquad - h^\gamma\det B_{2\times 2}
+ 2h^{(\alpha+\gamma)/2} \big\langle B_{2\times 2}: \mbox{cof}\,\nabla' (S_{31}, S_{32})\big\rangle, \\
& R_{12, 13}\simeq - h^{\alpha/2}\partial_1
\mbox{curl}(S_{31}, S_{32}) + h^{\gamma/2} \big(\mbox{curl}\,B_{2\times 2} \big)_1,\\
& R_{12, 23}\simeq - h^{\alpha/2}\partial_2
\mbox{curl}(S_{31}, S_{32}) + h^{\gamma/2} \big(\mbox{curl}\,B_{2\times 2} \big)_2,
\\
& \qquad R_{13, 13}\simeq - h^{\alpha/2} \partial^2_{11} S_{33} +
2h^{\gamma/2} \partial_1 B_{13},\\
& \qquad R_{13, 23}\simeq - h^{\alpha/2}\partial^2_{12} S_{33} +
h^{\gamma/2}\big(\partial_1 B_{23}+\partial_2 B_{13}\big),\\
& \qquad R_{23, 23}\simeq - h^{\alpha/2}\partial^2_{22} S_{33} + 2h^{\gamma/2}\partial_2 B_{23}.\\
\end{split}
\end{equation}
Again, the optimality conditions and the energy scalings in all cases
of Theorem \ref{th_main150}, can be read from the first three
curvatures above.   When $\alpha=2$, $\gamma>2$ we have
$R_{12,12}\simeq h^2\big(-3 \det\nabla'(S_{31}, S_{32})
+ \big\langle\nabla\,\mbox{curl}(S_{31}, S_{32}), (S_{31}, S_{32})^\perp\big\rangle\big)$,
$(R_{12, 13}, R_{12, 23}) \simeq -h\nabla'\mbox{curl}(S_{31}, S_{32})$
and the fact that at least one of  these expressions is nonzero is
equivalent to the condition displayed in case (i). When
$\alpha=\gamma=2$, then get the full expressions:  
$R_{12,12}\simeq h^2\big(-3 \det\nabla'(S_{31}, S_{32})
+ \big\langle\nabla\,\mbox{curl}(S_{31}, S_{32}), (S_{31},
S_{32})^\perp\big\rangle\big) - \det B_{2\times 2}
+ 2 \big\langle B_{2\times 2}: \mbox{cof}\,\nabla' (S_{31}, S_{32})\big\rangle\big)$
and $(R_{12, 13}, R_{12, 23}) \simeq h\big(\mbox{curl}\, B_{2\times 2}
-\nabla'\mbox{curl}(S_{31}, S_{32})\big)$, which are again consistent with (i). In both cases $\inf I^h$ is of
order equal to the square of the stretching order, i.e. $h^4$.
For $\gamma=2$, $\alpha>2$ we get: $R_{12,12}\simeq -h^2\det B_{2\times 2}$
and $(R_{12, 13}, R_{12, 23}) \simeq h\,\mbox{curl}\, B_{2\times 2}$,
in agreement with case (ii). Finally, when $\alpha, \gamma>2$ then
$(R_{12, 13}, R_{12, 23})$ is of the order $h^{(\alpha\wedge\gamma)/2}$, with the corresponding coefficients 
indicated in (iii). The compatible stretching order in that case is
$h^{1+(\alpha\wedge\gamma)/2}$ which is strictly larger than the order
$h^{\alpha\wedge\gamma\wedge (\alpha+\gamma)/2}$ of $R_{12,12}$, as
computed in (\ref{curv_2}). Thus, stretching-related curvature does
not contribute towards the energy, and the exponent of the order of
$\inf I^h$ equals twice the bending order plus $1$, which yields $h^{2+\alpha\wedge\gamma}$.

We conjecture that the next viable energy scalings after those
displayed in Theorem \ref{th_main150} are guided by the lowest order
terms in $R_{13, 13}, R_{13, 23}, R_{23, 23}$ given in (\ref{curv_2}).


\subsection{An outline} We prove the convergence properties
(\ref{pomoc}) and the lower bounds of both Theorems \ref{th_main1} and
\ref{th_main15} in section \ref{sec_lb}. The proofs are written in a
unified manner and are only specified to the two assumed prestrain scalings
in Corollaries \ref{cor_com3} and \ref{cor_S0}. Section \ref{sec_recov}
gathers all constructions of recovery sequence in this papers: in sections \ref{standard}
and \ref{addi} we provide the upper bounds portion of the
$\Gamma$-limit statements in Theorems \ref{th_main1} and
\ref{th_main15}, while section \ref{energy_rough} is devoted to showing
Theorem \ref{th_general_scaling}. Corollaries \ref{coro1} and
\ref{coro2} rely on two decomposition results for symmetric matrix
fields in section \ref{sec_Helm}, where we also prove Proposition
\ref{infbd}. The optimality conditions listed in Theorems
\ref{th_main10} and \ref{th_main150} follow from the same
decomposition results, while the identification in terms of Riemann
curvatures in (\ref{curv_1}) and (\ref{curv_2}) is given in section \ref{sec_curv}.

\subsection*{Acknowledgements} This research begun in the framework of
Women in Mathematics of Materials (WIMM) Workshop. The authors are
grateful to the Association for Women in Mathematics and the Michigan
Center for Applied and Interdisciplinary Mathematics for their support
and hospitality. Silvia Jimenez-Bolanos was supported by the Picker
Fellowship from the Colgate University Research Council. 
Marta Lewicka was supported by NSF grants DMS-1613153
and DMS-2006439.

\section{Compactness and lower bounds}\label{sec_lb}

In this section, we investigate the asymptotic properties of minimizing
sequences to $I^h$ in the scaling sub-regimes of: $\lim_{h\to 0}h^{-2}\inf I^h=0$. First,
it  follows from \cite[Theorem 1.6]{lemapa1} that:
\begin{lemma}\label{lem1}
Let $\delta>0$ and assume that for a sequence $\{u^h\in
W^{1,2}(\Omega^h,\R^3)\}_{h\to 0}$ there holds: $I^h(u^h)\leq Ch^{2+\delta}$. Then there
exists a sequence $\{R^h\in W^{1,2}(\omega, SO(3))\}_{h\to 0}$ such that:
\begin{equation}\label{app}
\frac{1}{h}\int_{\Omega^h}|\nabla u^h - R^hA^h|^2\dx\leq C h^{2+ \delta\wedge \alpha\wedge\gamma},
\qquad  \int_{\omega} |\nabla R^h|^2\dxp\leq C h^{ \delta\wedge \alpha\wedge\gamma}.
\end{equation}
\end{lemma}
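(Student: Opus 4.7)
The plan is to apply the Friesecke-James-M\"uller geometric rigidity theorem in the prestrain-adapted form of \cite[Theorem 1.6]{lemapa1}. From the coercivity $W(F)\geq c\,\dist^2(F, SO(3))$ and the hypothesis $I^h(u^h)\leq Ch^{2+\delta}$ we get immediately
\[
\frac{1}{h}\int_{\Omega^h}\dist^2\!\big((\nabla u^h)(A^h)^{-1}, SO(3)\big)\dx \leq Ch^{2+\delta}.
\]
To convert this into \eqref{app}, I would cover $\omega$ with a finitely overlapping family of open squares $\{Q_i\}$ of side comparable to $h$ centered at points $x_i$, apply a rigidity estimate cube-by-cube on each slab $Q_i\times(-h/2, h/2)$ to produce local rotations $R_i\in SO(3)$, and then patch the $\{R_i\}$ into a single field $R^h\in W^{1,2}(\omega, SO(3))$ via mollification and projection back onto $SO(3)$.

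The essential refinement is that on each cube we compare $\nabla u^h$ not to $R_i$ alone but to $R_i A^h_i$, where $A^h_i := A^h(x_i, 0) = Id_3 + h^{\alpha/2}S(x_i)$ is the prestrain frozen at the cube center. Since $A^h_i$ is constant and close to the identity, the linear change of variables $y = A^h_i x$ reduces the slab rigidity to the classical FJM theorem applied to $\tilde u(y) := u^h((A^h_i)^{-1}y)$, with constants uniform in $h$; transforming back yields
\[
\int_{Q_i\times(-h/2, h/2)}|\nabla u^h - R_i A^h_i|^2\dx \leq C\int_{Q_i\times(-h/2, h/2)}\dist^2\!\big((\nabla u^h)(A^h_i)^{-1}, SO(3)\big)\dx.
\]
The key gain comes from the Lipschitz pointwise bound
\[
|A^h - A^h_i|^2 \leq 2h^\alpha\,\|\nabla S\|_\infty^2\,|x'-x_i|^2 + 2h^\gamma x_3^2\,\|B\|_\infty^2 \leq C\big(h^{2+\alpha}+h^{2+\gamma}\big)
\]
on $Q_i\times(-h/2, h/2)$, which summed over $i$ gives $\frac{1}{h}\int_{\Omega^h}|A^h-A^h_\cdot|^2\dx \leq C(h^{2+\alpha}+h^{2+\gamma})$. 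This is sharper by a factor $h^2$ than the naive estimate $\frac{1}{h}\int|A^h-Id_3|^2 = O(h^\alpha+h^{2+\gamma})$, and it is precisely this extra $h^2$ that produces the exponent $\alpha$ (rather than $\alpha-2$) in \eqref{app}.

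To conclude, the standard FJM patching argument bounds $|R_i-R_j|^2$ for adjacent cubes by the integrated rigidity error on $(Q_i\cup Q_j)\times(-h/2, h/2)$, which by the previous two displays is controlled by $C(h^{3+\delta}+h^{3+\alpha}+h^{3+\gamma})$ after integration; summing yields $\int_\omega|\nabla R^h|^2\dxp \leq Ch^{-3}(h^{3+\delta}+h^{3+\alpha}+h^{3+\gamma})\leq Ch^{\delta\wedge\alpha\wedge\gamma}$. Writing $|\nabla u^h-R^h A^h|^2\leq 3|\nabla u^h-R_i A^h_i|^2+3|R^h-R_i|^2|A^h_i|^2+3|A^h-A^h_i|^2$ on each $Q_i\times(-h/2, h/2)$, then using the interpolation estimate $\int|R^h-R_\cdot|^2 \leq Ch^2\int_\omega|\nabla R^h|^2$ together with the two previous displays, produces the first bound of \eqref{app} with exponent $\delta\wedge\alpha\wedge\gamma$. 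The main technical obstacle will be the bookkeeping in the patching step: one must verify that neither the mollification nor the $SO(3)$-projection loses any powers of $h$, and that the $\{R_i\}$ are chosen consistently across overlapping cubes so that their adjacent differences obey the prestrain-adapted rigidity on $(Q_i\cup Q_j)\times(-h/2, h/2)$ -- precisely the content of \cite[Theorem 1.6]{lemapa1}.
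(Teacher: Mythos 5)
Your proposal is correct and takes essentially the same route as the paper, which simply derives the lemma from \cite[Theorem 1.6]{lemapa1}: your covering/freezing/patching scheme, with the prestrain frozen at cube centers so that $|A^h-A^h_i|^2\leq C(h^{2+\alpha}+h^{2+\gamma})$ gains the extra factor $h^2$ over the naive comparison, is precisely the prestrain-adapted Friesecke--James--M\"uller argument underlying that cited theorem and yields the stated exponent $\delta\wedge\alpha\wedge\gamma$. The only loose phrasing is the per-pair bound $|R_i-R_j|^2\leq C(h^{3+\delta}+h^{3+\alpha}+h^{3+\gamma})$, which need not hold uniformly since the rigidity error may concentrate on few cubes; what one actually proves is $|R_i-R_j|^2\leq Ch^{-3}E_{ij}$ with $E_{ij}$ the local error, and the finite-overlap summation then gives exactly your final estimates.
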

Second, in the context of Lemma \ref{lem1}, the scaled integrands in
${h^{-(2+\delta)}}I^h(u^h)$, equal in virtue of frame invariance and by Taylor expanding $W$:
\begin{equation}\label{expa}
\frac{1}{h^{2+\delta}} W\big((R^h)^T\nabla u^h (A^h)^{-1}\big) =
\frac{1}{2}D^2W(Id_3) (G^h)^{\otimes 2} + o(|G^h|^2) \quad \mbox{ as }\; h\to 0,
\end{equation}
where we define the sequence of strains $\{G^h\in L^2(\Omega^1,\mathbb{R}^{3\times 3})\}_{h\to 0}$ in:
$$G^h(x', x_3) = \frac{1}{h^{1+\delta/2}}\Big( R^h(x')^T\nabla u^h(x',
hx_3) A^h(x', hx_3)^{-1}-Id_3\Big)\quad\mbox{ for all } x'\in \omega,
\; x_3\in \big(-\frac{1}{2}, \frac{1}{2}\big).$$
In view of the expansion (\ref{expa}), it is natural to expect that
the limit of $h^{-(2+\delta)} I^h(u^h)$ quantifies the limit of
$\{G^h\}_{h\to 0}$. By (\ref{app}), a sufficient condition to get
a subsequential convergence in:
\begin{equation}\label{due}
G^h\rightharpoonup G\qquad \mbox{ weakly in}\; L^2(\Omega^1,\mathbb{R}^{3\times
  3}), \mbox{ as }\; h\to 0
\end{equation}
is thus: $1+(\delta\wedge\alpha\wedge\gamma)/2\geq 1+\delta/2$, or equivalently:
$$\delta \leq \alpha\wedge\gamma.$$

We have the following compactness result:
\begin{lemma}\label{lem_com}
Let $\delta\in (0, \alpha\wedge\gamma]$ and assume that for a sequence $\{u^h\in
W^{1,2}(\Omega^h,\R^3)\}_{h\to 0}$ there holds: $I^h(u^h)\leq
Ch^{2+\delta}$. Then there exist sequences $\{\bar R^h\in
SO(3)\}_{h\to 0}$, $\{c^h\in \R^3\}_{h\to 0}$ such that there holds for the rescaled deformations
$y^h(x', x_3) = (\bar R^h)^Tu^h(x', hx_3)-c^h\in W^{1,2}(\Omega^1, \R^3)$:
\begin{enumerate}[leftmargin=7mm]
\item[(i)] $y^h\to x'$ strongly in $W^{1,2}(\Omega^1, \R^3)$, as $h\to 0$.\vspace{1mm}
\item[(ii)] Define: $V^h(x')=h^{-\delta/2} \fint_{-1/2}^{1/2}y^h(x',
  x_3)-x'\;\mathrm{d}x_3$. Then, up to a subsequence: $V^h\to V$ as
  $h\to 0$ strongly in $W^{1,2}(\omega, \R^3)$. The limit $V\in
 W^{2,2}(\omega, \R^3)$ and it satisfies:
$$\mathrm{sym}(\nabla V)_{2\times 2}= \left\{\begin{array}{ll} 0 &\mbox{for }\;
    \delta<\alpha \\ S_{2\times 2} &\mbox{for }\; \delta=\alpha. \end{array}\right.$$
In particular, for $\delta<\alpha$ or when $S_{2\times 2}=0$, we get that
$V=(0,0,v)$ for an out-of-plane displacement $v\in W^{2,2}(\omega,\R)$. 
The case $\delta=\alpha$ is viable only when
$\mathrm{curl}^T\mathrm{curl}\,S_{2\times 2} \equiv 0$ in $\omega$.
\item[(iii)] The in-plane part of the limiting strain $G$ in (\ref{due}) satisfies:
\begin{equation*}
\begin{split}
\big(\partial_3 G\big)_{2\times 2} = -\nabla^2V_3
& + \left\{\begin{array}{ll} 0 &\mbox{for }\; \delta<\alpha\\ 2 \big(\nabla (S_{31}, S_{32})\big)_{2\times 2}
&\mbox{for }\; \delta=\alpha\end{array}\right. - 
\left\{\begin{array}{ll} 0 &\mbox{for }\; \delta<\gamma\\ B_{2\times 2}
&\mbox{for }\; \delta=\gamma.\end{array}\right.
\end{split}
\end{equation*}
Hence $G(x',0)_{2\times 2}$ is well defined, and it equals:
$\fint_{-1/2}^{1/2}G(x', x_3)_{2\times 2}\;\mathrm{d}x_3\in L^{2}(\omega,\R^{2\times 2}).$
\end{enumerate}
\end{lemma}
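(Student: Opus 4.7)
The plan is to follow the dimension-reduction scheme of Friesecke--James--M\"uller \cite{FJM}, adapted to the prestrained setting as in \cite{lemapa1}: apply the rigidity estimate of Lemma~\ref{lem1} to obtain an approximating rotation field $R^h$, normalize by a constant rotation $\bar R^h\in SO(3)$, and extract the first-order skew-symmetric limit $A$ of $(\bar R^h)^T R^h$. The limit $A$ will encode the structure of $V$ in (ii) and, via a distributional matching, the formula for $(\partial_3 G)_{2\times 2}$ in (iii).

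\textbf{Rigid approximation and (i).} Lemma~\ref{lem1} with $\delta\le\alpha\wedge\gamma$ yields $R^h\in W^{1,2}(\omega,SO(3))$ satisfying (\ref{app}). Let $\bar R^h\in SO(3)$ be the $SO(3)$-projection of $\fint_\omega R^h\dxp$; by Poincar\'e, $\|R^h-\bar R^h\|_{W^{1,2}(\omega)}\le Ch^{\delta/2}$. Choose $c^h\in\R^3$ so that $\fint_{\Omega^1}y^h\dx=0$. The identity $\nabla y^h(x',x_3)=(\bar R^h)^T(\nabla u^h)(x',hx_3)\,\mathrm{diag}(1,1,h)$ together with the rescaled rigidity bound yield $\nabla y^h=(\bar R^h)^T R^h A^h(x',hx_3)\,\mathrm{diag}(1,1,h)+o(1)$ in $L^2(\Omega^1)$; since $(\bar R^h)^T R^h\to Id$ in $L^2(\omega)$ and $A^h\to Id$ uniformly, the right-hand side converges to $\mathrm{diag}(1,1,0)=\nabla x'$, establishing (i) via Poincar\'e and the centering.

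\textbf{Part (ii): skew-symmetric limit and structure of $V$.} The bound $\|\nabla R^h\|_{L^2}\le Ch^{\delta/2}$ makes $\hat R^h:=h^{-\delta/2}[(\bar R^h)^T R^h-Id]$ bounded in $W^{1,2}(\omega,\R^{3\times 3})$; passing to a subsequence, $\hat R^h\rightharpoonup A$ weakly in $W^{1,2}$ and strongly in $L^p$ for every $p<\infty$. Expanding $(\bar R^h)^T R^h\in SO(3)$ at first order forces $A+A^T=0$. From the rescaled defining identity $\nabla u^h(x',hx_3)=R^h(Id+h^{1+\delta/2}G^h)A^h(x',hx_3)$, a direct index computation of the first two columns of $\nabla y^h$, averaging over $x_3\in(-1/2,1/2)$ (which kills the linear-in-$x_3$ term $h^{1+\gamma/2}x_3 B$ inside $A^h$), and division by $h^{\delta/2}$, yield
\begin{equation*}
(\nabla V^h)_{i\beta}=\hat R^h_{i\beta}+h^{(\alpha-\delta)/2}\bigl((\bar R^h)^T R^h S\bigr)_{i\beta}+o(1)\to A_{i\beta}+\one_{\delta=\alpha}S_{i\beta}
\end{equation*}
strongly in $L^2$, for $i\in\{1,2,3\}$ and $\beta\in\{1,2\}$. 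Restricting to $i\in\{1,2\}$, the $2\times 2$ symmetric part is $\one_{\delta=\alpha}S_{2\times 2}$ by the skew-symmetry of $A$. When $\delta<\alpha$ or $S_{2\times 2}\equiv 0$, $(V_1,V_2)$ has skew-symmetric gradient, hence is an infinitesimal in-plane rigid motion which is absorbed into a refined choice of $\bar R^h$ and $c^h$, reducing to $V=(0,0,v)$ with $v\in W^{2,2}$ (since $A\in W^{1,2}$). In the critical case $\delta=\alpha$, existence of $V\in W^{2,2}$ with $\mathrm{sym}(\nabla V)_{2\times 2}=S_{2\times 2}$ is equivalent to the Saint-Venant compatibility $\mathrm{curl}^T\mathrm{curl}\,S_{2\times 2}\equiv 0$.

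\textbf{Part (iii) and main difficulty.} I compute $h^{-1-\delta/2}\partial_3(\nabla' y^h)_{\alpha\beta}$, $\alpha,\beta\in\{1,2\}$, two ways and match distributional limits. Direct $x_3$-differentiation of the expression for $(\nabla' y^h)_{\alpha\beta}$ from (ii), using $\partial_3[A^h(x',hx_3)]=h^{1+\gamma/2}B$ (chain rule), gives the distributional limit $(\partial_3 G)_{\alpha\beta}+\one_{\delta=\gamma}B_{\alpha\beta}$. On the other hand, commuting $\partial_3\nabla'=\nabla'\partial_3$ and expanding $\partial_3 y^h=h(\bar R^h)^T R^h(Id+h^{1+\delta/2}G^h)A^h e_3$ with $A^h e_3=e_3+h^{\alpha/2}(S_{13},S_{23},S_{33})^T+O(h)$, the first-order term produces the limit $\partial_\beta A_{\alpha 3}+\one_{\delta=\alpha}\partial_\beta S_{\alpha 3}$. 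The formula for $(\nabla V)_{i\beta}$ with $i=3$ identifies $\partial_\alpha V_3=A_{3\alpha}+\one_{\delta=\alpha}S_{3\alpha}$; combined with $A_{\alpha 3}=-A_{3\alpha}$ (skew-symmetry) and $S_{3\alpha}=S_{\alpha 3}$ (symmetry of $S$), matching the two limits yields
\begin{equation*}
(\partial_3 G)_{\alpha\beta}=-\partial_\alpha\partial_\beta V_3+2\one_{\delta=\alpha}\partial_\beta S_{\alpha 3}-\one_{\delta=\gamma}B_{\alpha\beta},
\end{equation*}
the asserted formula of (iii). Its $x_3$-independence makes $G_{2\times 2}$ affine in $x_3$, so the trace $G(x',0)_{2\times 2}$ is well defined and equals the $x_3$-average. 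The genuine subtleties are the rigid-motion absorption in (ii), and the distributional multiplication $\partial_3G^h\cdot A^h$ in (iii), where the strong convergence $A^h\to Id$ in $L^\infty$ is needed in order to pass to the limit against test functions.
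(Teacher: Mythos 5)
Your proposal is correct and follows essentially the same route as the paper: rigidity plus averaged-rotation normalization, the skew-symmetric weak limit of $h^{-\delta/2}\big((\bar R^h)^TR^h-Id_3\big)$ identifying $\nabla V$, and the $x_3$-derivative of the strain computed from the expansion of $\partial_3 y^h$. The only cosmetic difference is that the paper kills the in-plane infinitesimal rigid motion a priori, by building a second normalization $\hat R^h=\mathbb{P}_{SO(3)}\fint_{\Omega^h}(\tilde R^h)^T\nabla u^h\dx$ into $\bar R^h$ so that $\mathrm{skew}\fint_\omega(\nabla V)_{2\times 2}\dxp=0$ automatically, whereas you absorb it a posteriori into a refined choice of $\bar R^h$, $c^h$ — both are standard and equivalent.
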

\begin{proof}
{\bf 1.} As in the proof of \cite[Theorem 1.2]{lemapa1}, we define:
$$\bar R^h = \tilde R^h\hat R^h \quad \mbox{where: } \tilde
R^h=\mathbb{P}_{SO(3)}\fint_\omega R^h\dxp, \quad \hat
R^h=\mathbb{P}_{SO(3)}\fint_{\Omega^h} (\tilde R^h)^T\nabla u^h\dx,$$
where the uniqueness of the above projections follows in virtue of the
bounds in (\ref{app}). Further:
\begin{equation}\label{uno}
\|(\bar R^h)^TR^h-Id_3\|_{W^{1,2}(\omega)}\leq C h^{\delta/2}.
\end{equation}
By choosing $c^h$ so that $\fint_{\Omega^1}y^h\dx=\fint_\omega
x'\dxp$, we obtain the following bounds that imply (i):
$$\fint_{\Omega^1}|(\nabla y^h-Id)_{3\times 2}|^2\dx \leq Ch^\delta, 
\qquad \fint_{\Omega^1}|\partial_3y^h|^2\dx \leq Ch^2.$$

\smallskip

{\bf 2.} Fix $s\ll 1$. From (\ref{uno}), (\ref{due}) we observe the subsequential convergence as $h\to 0$, in:
\begin{equation}\label{tre}
\begin{split}
\frac{1}{s}\big(G(x', &x_3+s) - G(x', x_3)\big)_{2\times
  2} \\ & \leftharpoonup \frac{1}{s}\Big((\bar R^h)^TR^h \Big(G^h(x', x_3+s) A^h(x',
hx_3+hs) - G^h(x',x_s)A^h(x', hx_3)\Big)\Big)_{2\times 2} \\ & =
\frac{1}{s} \frac{1}{h^{1+\delta/2}}\Big(\nabla y^h(x', x_3+s) -
\nabla y^h(x', x_3)\Big)_{2\times 2} - h^{\gamma/2 - \delta/2} B(x')_{2\times 2}.
\end{split}
\end{equation}
To identify the limit of the first term in the right hand side above,
we consider the asymptotics of:
$h^{-(1+\delta/2)}\frac{1}{s} \big(y^h(x', x_3+s) - y^h(x', x_3)\big)
= h^{-(1+\delta/2)}\fint_0^s\partial_3y^h(x', x_3+t)\;\mbox{d}t$. Namely, we write: 
\begin{equation*}
\begin{split}
&\frac{1}{h^{1+\delta/2}}\big(\partial_3y^h(x',x_3) - he_3\big) = \frac{1}{h^{\delta/2}}
\Big((\bar R^h)^T\partial_3u^h(x', hx_3) - e_3\pm A^h(x',
hx_3)e_3\Big) \\ & \qquad = \frac{1}{h^{\delta/2}} (\bar R^h)^T\Big(\nabla u^h(x', hx_3) - R^h A^h(x',
hx_3)\Big) e_3 + \frac{1}{h^{\delta/2}}\Big((\bar
R^h)^TR^h(x')-Id_3\Big)A^h(x', hx_3)e_3 \vspace{1mm} \\ & \qquad \qquad + h^{\alpha/2-\delta/2}S(x')
e_3 + h^{\gamma/2+1-\delta/2}x_3B(x')e_3.
\end{split}
\end{equation*}
The first term in the right hand side above is bounded by $Ch$ from (\ref{app}), so it
converges to $0$. For the second term, we first denote the following subsequential limit:
\begin{equation}\label{quattro}
\frac{1}{h^{\delta/2}}\Big((\bar R^h)^TR^h-Id_3\Big)\rightharpoonup P \quad \mbox{ weakly in}\;
W^{1,2}(\omega,\mathbb{R}^{3\times 3}), \mbox{ as }\; h\to 0,
\end{equation}
whose existence is implied by (\ref{uno}) and where the limiting field
satisfies: $P\in W^{1,2}(\omega, so(3))$.
Recalling (\ref{tre}) we hence obtain:
\begin{equation*}
\begin{split}
G(x', x_3)_{2\times 2} - G(x', 0)_{2\times 2} =  x_3 \big(\nabla (Pe_3)(x')\big)_{2\times 2}
& + x_3 \left\{\begin{array}{ll} 0 &\mbox{for }\; \delta<\alpha\\ \big(\nabla (Se_3)(x')\big)_{2\times 2}
&\mbox{for }\; \delta=\alpha\end{array}\right. \\ & - 
x_3 \left\{\begin{array}{ll} 0 &\mbox{for }\; \delta<\gamma\\ B(x')_{2\times 2}
&\mbox{for }\; \delta=\gamma.\end{array}\right.
\end{split}
\end{equation*}

\smallskip

{\bf 3.} We now identify the entries $P_{31}, P_{32}$ of the limiting skew field in (\ref{quattro}). We write:
\begin{equation*}
\begin{split}
\nabla V^h = & \; \frac{1}{h^{\delta/2}}(\bar R^h)^T\fint_{-h/2}^{h/2}\big(\nabla u^h - R^hA^h\big)_{3\times
  2}\;\mbox{d}x_3 + \frac{1}{h^{\delta/2}}\big((\bar
R^h)^TR^h - Id_3\big)_{3\times 2} \\ & + h^{\alpha/2-\delta/2}(\bar R^h)^T R^h S_{3\times 2}, 
\end{split}
\end{equation*}
so by (\ref{quattro}) we get, up to a subsequence:
\begin{equation*}
\nabla V^h\to P_{3\times 2} + \left\{\begin{array}{ll} 0 &\mbox{for
    }\; \delta<\alpha\\ S_{3\times 2}  &\mbox{for }\; \delta=\alpha\end{array}\right.  
 \quad \mbox{ strongly in}\; L^{2}(\omega,\mathbb{R}^{3\times 2}), \mbox{ as }\; h\to 0.
\end{equation*}
Since $\fint_\omega V^h\dxp =0$, this proves the convergence statement
in (ii), with $\nabla V$ given by the right hand side
above. Equivalently, we record the useful formula:
\begin{equation}\label{cinque}
P_{3\times 2} = \nabla V -  \left\{\begin{array}{ll} 0 &\mbox{for
    }\; \delta<\alpha\\ S_{3\times 2}  &\mbox{for }\; \delta=\alpha.\end{array}\right.  
\end{equation}

Note now that the definitions of $\tilde R^h$ and $\hat R^h$ imply
that: $\mbox{skew} \fint_\omega(\nabla V)_{2\times 2} \dxp= 0$, because:
\begin{equation}\label{cinque5}
\fint_\omega (\nabla V^h)_{2\times 2}\dxp
=\frac{1}{h^{\delta/2}}\Big((\hat R^h)^T\fint_{\Omega^h}(\tilde
R^h)^T\nabla u^h-\hat R^h\dx\Big)_{2\times 2}\in \R^{2\times 2}_{sym}.
\end{equation}
There also holds: $\fint_\omega V\dxp=0$.
Consequently, in case $\delta<\alpha$ or $S_{2\times 2}=0$ it follows that the tangential
component of $V$ must have gradient $0$ and mean $0$, and hence be equal to $0$. 
This completes the proof of (ii), while from (\ref{cinque}) we get:
\begin{equation}\label{sei}
\big((Pe_3)_1, (Pe_3)_2\big) = -(P_{31}, P_{32}) =
-(\partial_1V_3,\partial_2V_3) + \left\{\begin{array}{ll} 0 &\mbox{for
    }\; \delta<\alpha\\ (S_{31}, S_{32})  &\mbox{for }\; \delta=\alpha.\end{array}\right.  
\end{equation}
The formula in (iii) is now a consequence of step 2.
\end{proof}

\begin{corollary}\label{low_bd}
In the context of Lemma \ref{lem_com}, there holds:
\begin{equation*}
\liminf_{h\to 0}\frac{1}{h^{2+\delta}} I^h(u^h) \geq
\frac{1}{2}\int_\omega\mathcal{Q}_2\big(\mathrm{sym}\,G(x',0)_{2\times 2}\big)\dxp +
\frac{1}{24}\int_{\omega}\mathcal{Q}_2 \big(\mathrm{sym}\,\partial_3 G_{2\times 2} \big)\dxp,
\end{equation*}
where the quadratic form $\mathcal{Q}_2$ is defined in:
\begin{equation*}
\mathcal{Q}_{2}(F_{2\times 2})=\min  \left\{ \mathcal{Q}_3(\tilde{F}); ~~
\tilde{F}\in\mathbb{R}^{3\times 3}\;\; \mbox{with} \;\;\tilde{F}_{2\times 2}=F_{2\times 2} \right\}, 
\quad \mathcal{Q}_{3}(F)=D^2W({Id}_3)(F,F).
\end{equation*}
\end{corollary}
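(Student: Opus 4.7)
The plan is to follow the standard framework for $\Gamma$-limit lower bounds in thin film theories (as in \cite{FJM} and \cite{lemapa1}): Taylor-expand $W$ around $Id_3$, truncate to control the higher-order remainder, apply weak lower semicontinuity of $\mathcal{Q}_3$ on symmetric matrices, reduce $\mathcal{Q}_3$ to $\mathcal{Q}_2$ by the very definition of the latter, and finally integrate out the transverse variable $x_3$ using the affine structure provided by Lemma \ref{lem_com} (iii).

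First I would introduce a good set $B^h \doteq \{x \in \Omega^1 : h^{1+\delta/2}|G^h(x)| \leq h^\sigma\}$ for some fixed $\sigma \in (0, 1+\delta/2)$. The $L^2$ bound $\|G^h\|_{L^2(\Omega^1)} \leq C$ implied by (\ref{app}), combined with Chebyshev's inequality, forces $|\Omega^1 \setminus B^h| \to 0$; a direct computation then shows that $\chi_{B^h}G^h$ still converges weakly in $L^2(\Omega^1, \R^{3\times 3})$ to the limit $G$ from (\ref{due}). Using frame invariance of $W$, the Taylor expansion (\ref{expa}) sharpens pointwise on $B^h$ to
$$\frac{1}{h^{2+\delta}} W\big((R^h)^T\nabla u^h (A^h)^{-1}\big) \geq \frac{1}{2}\mathcal{Q}_3(G^h) - \eta(h)\,|G^h|^2,$$
with $\eta(h)\to 0$ as $h\to 0$. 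Since $W \geq 0$, discarding the integrand off $B^h$ and absorbing the $\eta(h)|G^h|^2$ term via the uniform $L^2$ bound on $G^h$ yields
$$\liminf_{h\to 0}\frac{1}{h^{2+\delta}} I^h(u^h) \geq \liminf_{h\to 0}\frac{1}{2}\int_{\Omega^1} \chi_{B^h}\mathcal{Q}_3(G^h)\dx.$$

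Frame invariance further forces $\mathcal{Q}_3$ to vanish on skew-symmetric matrices and to be positive semidefinite on $\R^{3\times 3}_{sym}$, hence weakly lower semicontinuous in $L^2$. Applied to the weak convergence $\chi_{B^h}G^h \rightharpoonup G$ this gives
$$\liminf_{h\to 0}\frac{1}{2}\int_{\Omega^1} \chi_{B^h}\mathcal{Q}_3(G^h)\dx \;\geq\; \frac{1}{2}\int_{\Omega^1}\mathcal{Q}_3(\mathrm{sym}\,G)\dx \;\geq\; \frac{1}{2}\int_{\Omega^1}\mathcal{Q}_2(\mathrm{sym}\,G_{2\times 2})\dx,$$
where the last inequality is the definitional minimum property of $\mathcal{Q}_2$. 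Since Lemma \ref{lem_com} (iii) guarantees the affine representation $G(x',x_3)_{2\times 2} = G(x',0)_{2\times 2} + x_3 (\partial_3 G)_{2\times 2}$, Fubini together with $\int_{-1/2}^{1/2} x_3\,\mathrm{d}x_3 = 0$ and $\int_{-1/2}^{1/2} x_3^2\,\mathrm{d}x_3 = \frac{1}{12}$ splits the right-hand side into exactly the two advertised pieces with the coefficients $\frac{1}{2}$ and $\frac{1}{24}$.

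The main technical obstacle is calibrating the truncation exponent $\sigma$ so that the Taylor remainder is uniformly small on $B^h$ while simultaneously the discarded complement carries negligible $L^2$ mass of $G^h$; this is the familiar bind between two competing scales, resolved here by the quantitative rotation and strain estimates of Lemma \ref{lem1}. Everything else is essentially algebraic once the compactness and structural properties of Lemma \ref{lem_com} are in hand.
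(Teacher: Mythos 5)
Your proposal is correct and follows essentially the same route as the paper: truncation to a good set where the Taylor expansion of $W$ applies, weak lower semicontinuity of the convex map $F\mapsto\mathcal{Q}_3(\mathrm{sym}\,F)$ applied to the weak limit $G$, the definitional inequality $\mathcal{Q}_3\geq\mathcal{Q}_2$, and integration in $x_3$ of the affine expression from Lemma \ref{lem_com} (iii). The only cosmetic difference is your choice of truncation threshold $h^\sigma$ versus the paper's $\{h^{\delta/2}|G^h|\leq 1\}$, which plays the identical role.
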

\begin{proof}
Recalling (\ref{expa}) and the fact that $\{G^h\}_{h\to
  0}$ is bounded in $L^2(\Omega^1,\R^3)$, we write:
$$\frac{1}{h^{2+\delta}} I^h(u^h)\geq \int_{\{h^{\delta/2}|G^h|\leq
  1\}}\frac{1}{2}\mathcal{Q}_3(G^h) + o(1)|G^h|^2\dx =
\frac{1}{2}\int_{\Omega^1}\mathcal{Q}_3\big(\mathds{1}_{\{h^{\delta/2}|G^h|\leq
1\}}|G^h|\big)\dx+o(1),$$
as $h\to 0$. Further, since the argument in $\mathcal{Q}_3$ in the
right hand side above converges, weakly in $L^2(\Omega^1, \R^{3\times
  3})$, to $G$ by (\ref{due}), the weak lowersemicontinuity argument implies that:
$$\liminf_{h\to 0}\frac{1}{h^{2+\delta}} I^h(u^h)\geq \frac{1}{2}\int_{\Omega^1}\mathcal{Q}_3(G)\dx \geq 
\frac{1}{2}\int_{\Omega^1}\mathcal{Q}_3(\mathrm{sym}\,G)\dx \geq 
\frac{1}{2}\int_{\Omega^1}\mathcal{Q}_2(\mathrm{sym}\,G_{2\times
  2})\dx. $$
Here, we also used that
$\mathcal{Q}_3(G)=\mathcal{Q}_3(\mathrm{sym}\,G)$ and the definition
of $\mathcal{Q}_2$. By Lemma \ref{lem_com} (iii) we get: $G(x', x_3)_{2\times 2} =
G(x', 0)_{2\times 2} + x_3\partial_3 G(x', \cdot)_{2\times 2}$, which
concludes the proof. 
\end{proof}

\begin{lemma}\label{lem_com2}
In the context of Lemma \ref{lem_com}, we have the following
subsequential convergence,  weakly in $L^2(\omega, \R^{2\times 2})$ as $h\to 0$:
\begin{equation}\label{sette}
\begin{split}
\mathrm{sym}\,G(\cdot, 0)_{2\times 2}\leftharpoonup & \; \frac{1}{h}\mathrm{sym} \big(\nabla V^h\big)_{2\times 2} -
\frac{1}{h^{1+\delta/2}}\,\mathrm{sym}\big((\bar R^h)^TR^h-Id_3\big)_{2\times 2} 
\\ & - h^{\alpha/2-1-\delta/2}\, \mathrm{sym}\big((\bar R^h)^TR^h S\big)_{2\times 2}.
\end{split}
\end{equation}
\end{lemma}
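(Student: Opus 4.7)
The plan is to derive an exact algebraic identity for the $x_3$-average of the first two columns of $MG^h$, where $M=(\bar R^h)^T R^h$, and then pass to the weak $L^2$ limit. I begin from the defining identity for $G^h$, rewritten as $(Id_3 + h^{1+\delta/2}G^h)A^h(x',hx_3) = (R^h)^T\nabla u^h(x',hx_3)$. Restricting to the first two columns and using the chain-rule identity $((\bar R^h)^T\nabla u^h(x',hx_3))_{:,1:2} = (\nabla y^h)_{3\times 2}$, then left-multiplying by $M$, yields
$$(\nabla y^h)_{3\times 2} = M\bigl(Id_3 + h^{1+\delta/2}G^h\bigr)\bigl((Id_3)_{:,1:2} + h^{\alpha/2}S_{:,1:2} + h^{1+\gamma/2}x_3\,B_{:,1:2}\bigr).$$
Expanding the product, subtracting $(Id_3)_{:,1:2}$, and averaging over $x_3\in(-1/2,1/2)$ eliminates the odd-in-$x_3$ term $h^{1+\gamma/2}x_3(MB)_{:,1:2}$, while converting the left-hand side into $\nabla(x'+h^{\delta/2}V^h)_{3\times 2}-(Id_3)_{:,1:2}=h^{\delta/2}\nabla V^h$ by definition of $V^h$. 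Dividing by $h^{1+\delta/2}$ and rearranging then gives
$$\fint_{-1/2}^{1/2}(MG^h)_{:,1:2}\,\mathrm{d}x_3 = \frac{\nabla V^h}{h} - \frac{\bigl((\bar R^h)^TR^h - Id_3\bigr)_{:,1:2}}{h^{1+\delta/2}} - h^{\alpha/2-1-\delta/2}(MS)_{:,1:2} + \eta^h,$$
where $\eta^h$ collects the two cross terms $h^{\alpha/2}(MG^h S)_{:,1:2}$ and $h^{1+\gamma/2}\fint_{-1/2}^{1/2}x_3(MG^h B)_{:,1:2}\,\mathrm{d}x_3$, both of which vanish strongly in $L^2(\omega)$ as $h\to 0$ since $\{G^h\}$ is $L^2$-bounded and $M,S,B$ are $L^\infty$-bounded. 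Taking the symmetric part of the $2\times 2$ block on both sides reproduces exactly the right-hand side of \eqref{sette} modulo $\eta^h$.

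It then remains to show that the left-hand side above converges weakly in $L^2(\omega,\R^{2\times 2})$ to $\mathrm{sym}\,G(\cdot,0)_{2\times 2}$. I will first establish $MG^h\rightharpoonup G$ weakly in $L^2(\Omega^1)$: since $M\in SO(3)$ is uniformly bounded in $L^\infty$ and $M-Id_3\to 0$ in $W^{1,2}(\omega)$ by \eqref{uno}, dominated convergence upgrades this to $M-Id_3\to 0$ strongly in $L^p(\omega)$ for every $p<\infty$; combined with the weak convergence of $G^h$ from \eqref{due}, this yields $MG^h\rightharpoonup G$ weakly in $L^2(\Omega^1)$. Then, by Lemma \ref{lem_com}(iii) the map $x_3\mapsto G(x',x_3)_{2\times 2}$ is affine in $x_3$, so $\fint_{-1/2}^{1/2}G(x',x_3)_{2\times 2}\,\mathrm{d}x_3 = G(x',0)_{2\times 2}$; Fubini identifies the weak $L^2(\omega)$ limit of $\fint_{-1/2}^{1/2}(MG^h)_{2\times 2}\,\mathrm{d}x_3$ with $G(\cdot,0)_{2\times 2}$, and taking symmetric parts concludes \eqref{sette}.

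The main subtlety lies in the weak-$L^2$ convergence $MG^h\rightharpoonup G$: the convergence $M\to Id_3$ is strong only in $L^p$ for $p<\infty$ (since $W^{1,2}$ does not embed into $L^\infty$ in two dimensions), so one cannot directly invoke a product-of-limits argument; it is the uniform $L^\infty$-bound on $M$ as a rotation, combined with dominated convergence applied to arbitrary $L^2$ test functions, that justifies the passage to the limit.
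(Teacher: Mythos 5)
Your proof is correct and follows essentially the same route as the paper's: both unwind the definition of $G^h$ to express the $x_3$-average of its in-plane block through $\nabla V^h$, $(\bar R^h)^TR^h-Id_3$ and $S$, discard the remaining terms using (\ref{uno}), (\ref{app}) and the $L^2$-bound on $G^h$, and invoke Lemma \ref{lem_com} (iii) to identify $\fint_{-1/2}^{1/2}G(\cdot,x_3)_{2\times 2}\,\mathrm{d}x_3$ with $G(\cdot,0)_{2\times 2}$. The only organizational difference is that you keep the rotation $M=(\bar R^h)^TR^h$ attached to $G^h$ and therefore need the extra step $MG^h\rightharpoonup G$ (which you justify correctly via the uniform $L^\infty$ bound on rotations combined with the strong $L^2$ convergence $M\to Id_3$), whereas the paper writes $(R^h)^T=\big(Id_3+((\bar R^h)^TR^h-Id_3)^T\big)(\bar R^h)^T$ so that this rotation is absorbed into a strongly vanishing error term and the weak limit of $\fint_{-1/2}^{1/2}G^h_{2\times 2}\,\mathrm{d}x_3$ is read off directly from (\ref{due}).
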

\begin{proof}
We write:
\begin{equation*}
\begin{split}
\fint_{-1/2}^{1/2}G^h_{2\times 2}\;\mathrm{d}x_3 = \; & \Big(\big((\bar
R^h)^TR^h - Id_3\big)^T (\bar R^h)^T\cdot \frac{1}{h^{1+\delta/2}}\fint_{-h/2}^{h/2}(\nabla u^h -
R^hA^h)\;\mbox{d}x_3\cdot (A^h)^{-1}\Big)_{2\times 2} \\ & + 
\Big((\bar R^h)^T\cdot \frac{1}{h^{1+\delta/2}}\fint_{-h/2}^{h/2}(\nabla u^h -
R^hA^h)\;\mbox{d}x_3\Big)_{2\times 2}  \\ & + \Big((\bar
R^h)^T \cdot \frac{1}{h^{1+\delta/2}}\fint_{-h/2}^{h/2}(\nabla u^h -
R^hA^h)\;\mbox{d}x_3\cdot \big((A^h)^{-1}-Id_3\big)\Big)_{2\times 2}.
\end{split}
\end{equation*}
The first and the third terms in the right hand side above converge to $0$, in
$L^2(\omega, \R^{2\times 2})$ as $h\to 0$, because of (\ref{uno}) and (\ref{app}).
We rewrite the second term as:
$$\frac{1}{h} \big(\nabla V^h\big)_{2\times 2} -
\frac{1}{h^{1+\delta/2}}\big((\bar R^h)^TR^h-Id_3\big)_{2\times 2} 
- h^{\alpha/2-1-\delta/2}\big((\bar R^h)^TR^h S\big)_{2\times 2}.$$
This implies the result, because of the subsequential convergence
weakly in $L^2(\omega, \R^{2\times 2})$, of
$\fint_{-1/2}^{1/2}G^h(\cdot, x_3)_{2\times 2}\;\mathrm{d}x_3 $ to
$\fint_{-1/2}^{1/2}G(\cdot, x_3)_{2\times 2}\;\mathrm{d}x_3 = G(\cdot, 0)_{2\times 2}$.
\end{proof}

\begin{corollary}\label{cor_com3}
In the context of Lemma \ref{lem_com}, assume further that $\delta\geq
2$ and $\alpha\geq 2+\delta$. Then, $h^{-1}(V^h_1, V^h_2)$ converges
to some in-plane displacement $w$, up to a subsequence weakly in $W^{1,2}(\omega,
\R^{2})$ as $h\to 0$. Moreover, there holds: 
\begin{equation*}
\mathrm{sym}\,G(\cdot, 0)_{2\times 2} = \; \mathrm{sym} \nabla w  + \left\{\begin{array}{ll} 0 &\mbox{for
    }\; \delta>2\\ \frac{1}{2} (\nabla v)^{\otimes 2} &\mbox{for }\; \delta=2\end{array}\right. -
\left\{\begin{array}{ll} 0 &\mbox{for }\; \alpha>2+\delta \\ S_{2\times 2}
&\mbox{for }\; \alpha = 2+\delta.\end{array}\right.
\end{equation*}
Consequently, $\liminf_{h\to 0} h^{-(2+\delta)}I^h(u^h)$ is bounded from below by:
\begin{equation}\label{lowbdbd}
\begin{split}
& I(v,w) =  \frac{1}{24}\int_\omega \mathcal{Q}_2\Big(\nabla^2 v +
\left\{\begin{array}{ll} 0 & \mbox{for } \; \delta<\gamma\\ B_{2\times
      2} &\mbox{for } \; \delta=\gamma\end{array}\right. \Big) \dxp \\
& \quad +\frac{1}{2} \int_\omega \mathcal{Q}_2\Big(\mathrm{sym}\,\nabla w + 
\left\{\begin{array}{ll} 0 & \mbox{for } \; \delta>2\\ \frac{1}{2} (\nabla
    v)^{\otimes 2} &\mbox{for } \; \delta=2\end{array}\right. 
- \left\{\begin{array}{ll} 0 & \mbox{for } \; \alpha>2+\delta \\ S_{2\times
      2} &\mbox{for } \; \alpha=2+\delta \end{array}\right. \Big)\dxp,
\end{split}
\end{equation}
which coincides with the  functionals given in Theorem \ref{th_main1} in each of the indicated three cases.
\end{corollary}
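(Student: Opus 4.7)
The strategy is to pass to the limit in the identity (\ref{sette}) of Lemma \ref{lem_com2}. I would first identify the strong $L^2$ limits of the last two terms on its right-hand side; this forces $\frac{1}{h}\mathrm{sym}(\nabla V^h)_{2\times 2}$ to be bounded in $L^2(\omega)$, so that a Korn--Poincar\'e inequality produces a uniform $W^{1,2}$ bound on $h^{-1}(V^h_1,V^h_2)$ and hence the announced in-plane displacement $w$ along a subsequence. The formula for $\mathrm{sym}\,G(\cdot,0)_{2\times 2}$ then follows by passing to the limit, and inserting it together with Lemma \ref{lem_com}(iii) into Corollary \ref{low_bd} yields the stated lower bound $I(v,w)$.

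For the second term of (\ref{sette}) I would exploit the elementary identity $\mathrm{sym}(Q-Id_3) = -\tfrac{1}{2}(Q-Id_3)^T(Q-Id_3)$, valid for every $Q\in SO(3)$, applied to $Q = (\bar R^h)^T R^h$. This rewrites the term as
$$-\frac{h^{\delta/2-1}}{2}\Big(\frac{(\bar R^h)^TR^h-Id_3}{h^{\delta/2}}\Big)^T\Big(\frac{(\bar R^h)^TR^h-Id_3}{h^{\delta/2}}\Big).$$
By (\ref{quattro}) and the Rellich--Kondrachov theorem on the 2D domain $\omega$, the rescaled matrix converges to $P$ strongly in every $L^p(\omega)$ with $p<\infty$, so the product converges strongly in $L^2(\omega)$ to $P^TP$. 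The hypothesis $\alpha\geq 2+\delta$ forces $\delta<\alpha$, whence Lemma \ref{lem_com}(ii) gives $V=(0,0,v)$ and formula (\ref{cinque}) identifies $P_{3\times 2}$ as the matrix with vanishing $2\times 2$ block and third row $(\partial_1 v,\partial_2 v)$. A direct computation then yields $(P^TP)_{2\times 2} = (\nabla v)^{\otimes 2}$. The prefactor $h^{\delta/2-1}$ equals $1$ when $\delta=2$ and vanishes when $\delta>2$, which reproduces the advertised dichotomy. The third term of (\ref{sette}) is handled analogously: $(\bar R^h)^TR^h\to Id_3$ by (\ref{quattro}), symmetry of $S$ gives $\mathrm{sym}((\bar R^h)^TR^hS)_{2\times 2}\to S_{2\times 2}$ in $L^2$, and the prefactor $h^{\alpha/2-1-\delta/2}$ vanishes for $\alpha>2+\delta$ and equals $1$ for $\alpha=2+\delta$.

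With both strong limits identified, (\ref{sette}) implies that $\frac{1}{h}\mathrm{sym}\nabla(V^h_1,V^h_2)$ is bounded in $L^2(\omega,\R^{2\times 2})$. Since $\fint_\omega V^h=0$ and (\ref{cinque5}) gives $\mathrm{skew}\fint_\omega\nabla(V^h_1,V^h_2)=0$, the Korn--Poincar\'e inequality yields $\|h^{-1}(V^h_1,V^h_2)\|_{W^{1,2}}\leq C$, and a subsequence converges weakly to some $w\in W^{1,2}(\omega,\R^2)$. Taking the weak $L^2$ limit in (\ref{sette}) produces the claimed formula for $\mathrm{sym}\,G(\cdot,0)_{2\times 2}$. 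For the bending contribution, in the sub-regime $\delta<\alpha$ Lemma \ref{lem_com}(iii) gives $\mathrm{sym}\,\partial_3 G_{2\times 2}=-(\nabla^2 v+B_{2\times 2})$ when $\delta=\gamma$ and $-\nabla^2 v$ when $\delta<\gamma$; the sign is irrelevant inside $\mathcal{Q}_2$. Plugging both expressions into Corollary \ref{low_bd} reproduces exactly (\ref{lowbdbd}), and matching $\delta$ against $\gamma$ or $\alpha-2$ in the three regimes of Theorem \ref{th_main1} yields the announced coincidence.

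The key subtlety is the identification $(P^TP)_{2\times 2}=(\nabla v)^{\otimes 2}$: it is essential that the $2\times 2$ block of the skew limit $P$ vanishes, and this rests on the rigidity conclusion $V_1=V_2=0$ from Lemma \ref{lem_com}(ii), available here precisely because the regime $\alpha\geq 2+\delta$ enforces $\delta<\alpha$ strictly. The rest is careful bookkeeping of weak versus strong convergences.
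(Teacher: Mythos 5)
Your proposal is correct and follows essentially the same route as the paper: the identity $\mathrm{sym}(Q-Id_3)=-\frac12(Q-Id_3)^T(Q-Id_3)$ for $Q=(\bar R^h)^TR^h$ combined with (\ref{quattro}), the limit $-S_{2\times 2}$ of the third term via (\ref{uno}), the identification $(P^TP)_{2\times 2}=(\nabla v)^{\otimes 2}$ from (\ref{cinque}) using $V=(0,0,v)$ in the regime $\delta<\alpha$, and the conclusion via Corollary \ref{low_bd} and Lemma \ref{lem_com}(iii). Your explicit appeal to Korn--Poincar\'e with the normalizations $\fint_\omega V^h=0$ and (\ref{cinque5}) only spells out what the paper leaves implicit.
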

\begin{proof}
The second term in the right hand side of (\ref{sette}) can be
rewritten with the help of:
$$  -\mathrm{sym}\big((\bar R^h)^TR^h-Id_3\big)_{2\times 2}
 = \frac{1}{2} \mathrm{sym} \Big(\big((\bar R^h)^TR^h-Id_3\big)^T \big((\bar
R^h)^TR^h-Id_3\big)\Big)_{2\times 2}$$
By (\ref{quattro}) and since $\delta\geq 1+\delta/2$, we conclude
the following subsequential convergence, strongly in $L^{2}(\omega, \R^{2\times 2})$ as $h\to 0$:
$$  -\frac{1}{h^{1+\delta/2}}\,\mathrm{sym}\big((\bar R^h)^TR^h-Id_3\big)_{2\times 2} 
\rightharpoonup \left\{\begin{array}{ll} 0 &\mbox{for
    }\; \delta>2\\ \frac{1}{2} (P^TP)_{2\times 2} 
&\mbox{for }\; \delta=2.\end{array}\right. $$
When $\alpha\geq 2+\delta$, then the third term in (\ref{sette})
converges, by (\ref{uno}), to $0$ for $\alpha>2+\delta$ and to
$-S_{2\times 2}$ for $\alpha=2+\delta$. In this case, we also
have $V=(0,0,v)$ and (\ref{cinque}) yields that $(P^TP)_{2\times
  2}=(\nabla v)^{\otimes 2}$. This proves the formula for $G(\cdot,
0)_{2\times 2}$, since the resulting weak subsequential convergence of the first term
in the right hand side of (\ref{sette}):
$\frac{1}{h}\mathrm{sym} \big(\nabla V^h\big)_{2\times 2}$, is
equivalent with the weak convergence of  $h^{-1}(V^h_1, V^h_2)$ to some
limiting $w\in W^{1,2}(\omega, \R^2)$.

Finally, the lower bound on $\liminf_{h\to 0} h^{-(2+\delta)}I^h$ is derived from Corollary \ref{low_bd},
upon recalling Lemma \ref{lem_com} (iii). The optimal values of the exponent
$\delta$ given in Theorem \ref{th_main1} in function of $\alpha\geq 4$
and $\gamma\geq 2$, follow by a direct inspection.
\end{proof}

\begin{corollary}\label{cor_S0}
In the context of Lemma \ref{lem_com}, assume further that $\delta\geq
2$ and $S_{2\times 2}\equiv 0$ in $\omega$. Then,  $h^{-1}(V^h_1, V^h_2)$ converges
to some in-plane displacement $w$, up to a subsequence weakly in $W^{1,2}(\omega,
\R^{2})$ as $h\to 0$. Moreover, there holds: 
\begin{equation}\label{sette5}
\mathrm{sym}\,G(\cdot, 0)_{2\times 2} = \mathrm{sym} \nabla w  + \left\{\begin{array}{ll} 0 &\mbox{for
    }\; \delta>2\\ \frac{1}{2} (\nabla v)^{\otimes 2}
&\mbox{for }\; \delta=2<\alpha \vspace{1mm} \\  \frac{1}{2} (\nabla
v)^{\otimes 2} - \frac{1}{2}(S_{31}, S_{32})^{\otimes 2} &\mbox{for }\; \delta=\alpha = 2.\end{array}\right.
\end{equation}
Consequently, $\liminf_{h\to 0} h^{-(2+\delta)}I^h(u^h)$ is bounded from below by:
\begin{equation}\label{lowbdbd5}
\begin{split}
& I(v,w) =  \frac{1}{24}\int_\omega \mathcal{Q}_2\Big(\nabla^2 v - 
\left\{\begin{array}{ll} 0 &\mbox{for }\; \delta<\alpha\\ 2 \,\mathrm{sym}\big(\nabla (S_{31}, S_{32})\big)_{2\times 2}
&\mbox{for }\; \delta=\alpha\end{array}\right. +
\left\{\begin{array}{ll} 0 & \mbox{for } \; \delta<\gamma\\ B_{2\times
      2} &\mbox{for } \; \delta=\gamma\end{array}\right. \Big) \dxp \vspace{1mm} \\
& \quad +\frac{1}{2} \int_\omega \mathcal{Q}_2\Big(\mathrm{sym}\,\nabla w + 
\left\{\begin{array}{ll} 0 & \mbox{for } \; \delta>2\\ \frac{1}{2} (\nabla
    v)^{\otimes 2} &\mbox{for } \; \delta=2\end{array}\right. 
- \left\{\begin{array}{ll} 0 & \mbox{for } \; \alpha>2\\ 
\frac{1}{2}(S_{31}, S_{32})^{\otimes 2} &\mbox{for } \; \alpha=2\end{array}\right. \Big)\dxp,
\end{split}
\end{equation}
which coincides with the  functionals given in Theorem \ref{th_main15} in each of the indicated cases.
\end{corollary}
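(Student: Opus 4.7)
My plan is to adapt the argument for Corollary \ref{cor_com3} almost verbatim, starting from the weak identification (\ref{sette}) of Lemma \ref{lem_com2} and analyzing each of its three right-hand-side terms under $S_{2\times 2}\equiv 0$ and $\delta\geq 2$. The weak $W^{1,2}$ compactness of $h^{-1}(V^h_1,V^h_2)$ will be obtained as in the proof of Corollary \ref{cor_com3}, by rearranging (\ref{sette}) once the other two terms are shown to be bounded in $L^2$ and invoking Korn plus Poincar\'e together with (\ref{cinque5}). The lower bound (\ref{lowbdbd5}) then follows directly from Corollary \ref{low_bd} and Lemma \ref{lem_com}(iii), specialized to the purely out-of-plane limit $V=(0,0,v)$, which holds throughout here since $S_{2\times 2}\equiv 0$ forces $V=(0,0,v)$ even in the borderline case $\delta=\alpha$.

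The second term of (\ref{sette}) is handled exactly as in Corollary \ref{cor_com3}: the skew-orthogonality identity $\mathrm{sym}((\bar R^h)^TR^h-Id_3)=-\frac{1}{2}((\bar R^h)^TR^h-Id_3)^T((\bar R^h)^TR^h-Id_3)$, combined with (\ref{quattro}) and the compact embedding $W^{1,2}\hookrightarrow L^4$, produces strong $L^2$ convergence to $\frac{1}{2}(P^TP)_{2\times 2}$ for $\delta=2$ and to $0$ for $\delta>2$. The formulas (\ref{cinque})--(\ref{sei}) with $V=(0,0,v)$ give $P_{12}=0$ and $(P_{13},P_{23})=-\nabla v+\mathds{1}_{\delta=\alpha}(S_{31},S_{32})$, so that $(P^TP)_{2\times 2}=(\nabla v-\mathds{1}_{\delta=\alpha}(S_{31},S_{32}))^{\otimes 2}$. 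For the third term, the new observation is that $S_{2\times 2}\equiv 0$ permits the rewriting $((\bar R^h)^TR^hS)_{2\times 2}=(((\bar R^h)^TR^h-Id_3)S)_{2\times 2}$, so by (\ref{quattro}) and compact embedding this quantity divided by $h^{\delta/2}$ converges strongly in $L^2$ to $(PS)_{2\times 2}$. The net prefactor is then $h^{\alpha/2-1}$, which vanishes for $\alpha>2$ and contributes $-\mathrm{sym}(PS)_{2\times 2}$ at $\alpha=2$; a short computation using the explicit skew structure of $P$ and the vanishing upper $2\times 2$ block of $S$ yields $(PS)_{2\times 2}=(-\nabla v+(S_{31},S_{32}))\otimes(S_{31},S_{32})$ in the critical case $\delta=\alpha=2$.

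The main obstacle is the algebraic bookkeeping in this critical case $\delta=\alpha=2$: after expanding $\frac{1}{2}(-\nabla v+(S_{31},S_{32}))^{\otimes 2}$ from the second term and $-\mathrm{sym}((-\nabla v+(S_{31},S_{32}))\otimes(S_{31},S_{32}))$ from the third, the cross terms $\pm\mathrm{sym}(\nabla v\otimes(S_{31},S_{32}))$ cancel and the residue collapses to precisely $\frac{1}{2}(\nabla v)^{\otimes 2}-\frac{1}{2}(S_{31},S_{32})^{\otimes 2}$, matching (\ref{sette5}). In the remaining subcases, $\delta>2$ leaves only $\mathrm{sym}\nabla w$, while $\delta=2<\alpha$ retains $\mathrm{sym}\nabla w+\frac{1}{2}(\nabla v)^{\otimes 2}$ from the second term alone; both are immediate from the above analysis. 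Once (\ref{sette5}) is established, combining it with Lemma \ref{lem_com}(iii), which in the present setting reads $\partial_3 G_{2\times 2}=-\nabla^2 v+2\,\mathds{1}_{\delta=\alpha}\mathrm{sym}\,\nabla(S_{31},S_{32})-\mathds{1}_{\delta=\gamma}B_{2\times 2}$, and applying Corollary \ref{low_bd} yields (\ref{lowbdbd5}).
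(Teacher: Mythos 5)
Your proposal is correct and follows essentially the same route as the paper's proof: it starts from the decomposition (\ref{sette}), treats the second term exactly as in Corollary \ref{cor_com3}, exploits $S_{2\times 2}\equiv 0$ to rewrite the third term as $-h^{\alpha/2-1}\,\mathrm{sym}\big(h^{-\delta/2}((\bar R^h)^TR^h-Id_3)S\big)_{2\times 2}$ whose limit is identified via (\ref{quattro}) and (\ref{cinque}), and your algebra in the critical case $\delta=\alpha=2$ (including the cancellation of the cross terms $\pm\mathrm{sym}(\nabla v\otimes(S_{31},S_{32}))$) reproduces the paper's computation verbatim.
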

\begin{proof}
Convergence of the second term in the right hand side of (\ref{sette})
follows as in the proof of Corollary \ref{cor_com3}.
To show (\ref{sette5}), observe that the third term in (\ref{sette}) can be now written as:
\begin{equation}\label{sette7}
- h^{\alpha/2-1}\, \mathrm{sym}\Big(\frac{(\bar R^h)^TR^h -Id_3}{h^{\delta/2}}\cdot S\Big)_{2\times 2},
\end{equation}
and we may identify its limit when $\alpha\geq 2$, in virtue of (\ref{quattro}).
Namely, when $\alpha>2$ this limit is $0$ and we recover the first two
cases of (\ref{sette5}). When $\alpha=2$ then automatically $\delta=2$
as well, and (\ref{sette7}) converges subsequentially, weakly in
$L^2(\omega, \R^{2\times 2})$ as $h\to 0$ to:
$$-\mathrm{sym}\big(P S\big)_{2\times 2} = \mathrm{sym}\Big( \big(\nabla v - (S_{31},
S_{32})\big) \otimes (S_{31}, S_{32})\Big) = \mathrm{sym}\big(\nabla v \otimes (S_{31},
S_{32})\big) - (S_{31}, S_{32})^{\otimes 2}$$
where we used (\ref{cinque}). At the same time: $\frac{1}{2}(P^TP)_{2\times
  2} = \frac{1}{2}\big(\nabla v - (S_{31}, S_{32})\big)^{\otimes 2}$,
which concludes the proof of the last case in (\ref{sette5}).

The lower bound in (\ref{lowbdbd5}) follows from Corollary \ref{low_bd},
Lemma \ref{lem_com} (iii) and (\ref{sette5}). The optimal values of 
$\delta$ in function of $\alpha, \gamma\geq 2$, follow by a direct inspection.
\end{proof}

\section{Recovery sequences}\label{sec_recov}

In this section, we construct sequences of deformations $\{u^h\}_{h\to 0}$ 
with the desired asymptotics of the energy $I^h(u^h)$. First, in
subsection \ref{energy_rough} we prove the upper bound stated in Theorem \ref{th_general_scaling}.
Second, we complete the $\Gamma$-convergence results given in
Theorems \ref{th_main1} and \ref{th_main15}, by constructing recovery sequences for the
general form of the limiting functional $I$. In subsection \ref{standard}
we treat the case (\ref{lowbdbd}), and in subsection \ref{addi} the case (\ref{lowbdbd5}).

\smallskip

\subsection{Case $\mathbf{0<\alpha<4}$ and $\mathbf{\gamma>0}$}\label{energy_rough}
Recall that by \cite[Theorem 1.1]{CS}, for every $a\in (0,1/5)$ there exists $v\in\mathcal{C}^{1,a}(\bar\omega,\R)$
and $w\in \mathcal{C}^{1,a}(\bar\omega,\R^2)$ satisfying:
\begin{equation}\label{exactMA}
\frac{1}{2}(\nabla v)^{\otimes 2} + \mathrm{sym}\,\nabla w = S_{2\times 2}.
\end{equation}
We now regularize $v,w$ to
$v_\epsilon\in\mathcal{C}^\infty(\bar\omega,\R)$, $w_\epsilon\in \mathcal{C}^\infty(\bar\omega,\R^2)$, 
by means of the family of standard convolution kernels $\{\phi_\epsilon(x) =
\epsilon^{-2}\phi(x/\epsilon)\}_{\epsilon\to 0}$ where $\epsilon$ is a power of $h$ to be chosen later:
$$ v_\epsilon= v*\phi_\epsilon,\quad w_\epsilon=w*\phi_\epsilon \quad \mbox{
and } \quad \epsilon=h^t.$$
We will utilize the following bound, resulting from the commutator estimate \cite[Lemma 1]{CDS}:
\begin{equation}\label{m1}
\begin{split}
\big\| \frac{1}{2}(\nabla v_\epsilon&)^{\otimes 2} +
\mathrm{sym}\,\nabla w_\epsilon - S_{2\times
  2}\big\|_{\mathcal{C}^0(\omega)} \\ & \leq \big\| \frac{1}{2}(\nabla v_\epsilon)^{\otimes 2} +
\mathrm{sym}\,\nabla w_\epsilon - S_{2\times
  2}*\phi_\epsilon\big\|_{\mathcal{C}^0(\omega)} + \| S_{2\times 2}*\phi_\epsilon- S_{2\times 2}\|_{\mathcal{C}^0(\omega)} 
\\ & = \frac{1}{2}\big\| (\nabla v_\epsilon)^{\otimes 2} - (\nabla
v)^{\otimes 2}*\phi_\epsilon\big\|_{\mathcal{C}^0(\omega)} + \|
S_{2\times 2}*\phi_\epsilon- S_{2\times 2}\|_{\mathcal{C}^0(\omega)}
\\ & \leq C\epsilon^{2a} + C\epsilon^2 \leq C\epsilon^{2a},
\end{split}
\end{equation}
where the $C\epsilon^2$ bound follows by Taylor expanding $S_{2\times
  2}$ up to second order terms. Further, by \cite[(4.5)]{LP}, we get the uniform bounds:
\begin{equation}\label{m2}
\|\nabla v_\epsilon\|_{\mathcal{C}^0(\omega)} +
\|\nabla w_\epsilon\|_{\mathcal{C}^0(\omega)} \leq C,\qquad 
\|\nabla^2v_\epsilon\|_{\mathcal{C}^0(\omega)} +
\|\nabla^2w_\epsilon\|_{\mathcal{C}^0(\omega)} \leq C\epsilon^{a-1}.
\end{equation}

\medskip

\textbullet $~~$ Let $\delta=\alpha/2$. We denote $s=(S_{31}, S_{32})^T$ and define
$u^h\in \mathcal{C}^\infty(\bar \Omega^h,\R^3)$ by: 
\begin{equation*}\label{recseq0}
u^h = id_3 + h^{\delta/2}\left[\begin{array}{c} 0\\
    v_\epsilon\end{array}\right] + h^{\delta}\left[\begin{array}{c} w_\epsilon\\ 0\end{array}\right] 
+x_3\Big(h^{\delta/2} \left[\begin{array}{c} -\nabla v_\epsilon\\ 0\end{array}\right]
+ h^{\delta} \left[\begin{array}{c} 2 s\\ S_{33}-\frac{1}{2}|\nabla v_\epsilon|^2\end{array}\right]
+h^{3\delta/2}\vec b\Big),
\end{equation*}
with the higher order smooth correction vector field:
$$\vec b(x') = \Big(-S_{33}\nabla v_\epsilon+\frac{1}{2}|\nabla
v_\epsilon|^2\nabla v_\epsilon + (\nabla w_\epsilon)^T\nabla
v_\epsilon, 2\langle \nabla v_\epsilon, s\rangle\Big)^T.$$
It follows that:
\begin{equation*}
\begin{split}
\nabla u^h(x', hx_3) = & \;Id_3 + h^{\delta/2} \left[\begin{array}{cc} 
   0& -\nabla v_\epsilon\\ \nabla v_\epsilon &0 \end{array}\right] +
h^\delta \left[\begin{array}{cc} \nabla w_\epsilon & 2s \\ 0 &
    S_{33}-\frac{1}{2}|\nabla v_\epsilon|^2\end{array}\right] +
h^{3\delta/2} \left[\begin{array}{ccc}   0& 0& \vec b \end{array}\right]\\ &
+ h^{1+\delta/2}x_3\left[\begin{array}{cc} -\nabla^2 v_\epsilon &0\\ 0&0\end{array}\right]
+\mathcal{O}\big(h^{1+\delta}+h^{1+3\delta/2}(\|\nabla^2v_\epsilon\|+\|\nabla^2w_\epsilon\|)\big).
\end{split}
\end{equation*}
We now write $(A^h(x', hx_3))^{-1}=Id_3 - h^\delta S -
h^{1+\gamma/2}x_3B + \mathcal{O}(h^{2\delta}+h^{2+\gamma})$ and proceed with:
\begin{equation*}
\begin{split}
\nabla& u^h(A^h)^{-1}(x', hx_3) = Id_3 + P^h +
h^\delta \left[\begin{array}{cc} \nabla w_\epsilon -S_{2\times 2} & 0  \\ 0 &
-\frac{1}{2}|\nabla v_\epsilon|^2\end{array}\right] \\ & +
h^{3\delta/2} \left[\begin{array}{cc} \nabla v_\epsilon \otimes s &
    \frac{1}{2}|\nabla v_\epsilon|^2\nabla v_\epsilon + (\nabla
    w_\epsilon)^T\nabla v_\epsilon\\ -S_{2\times 2}\nabla v_\epsilon &
\langle\nabla v_\epsilon, s\rangle\end{array}\right] 
+ x_3\Big( h^{1+\delta/2}\left[\begin{array}{cc} -\nabla^2 v_\epsilon &0\\ 0&0\end{array}\right]
- h^{1+\gamma/2}B\Big) \\ & 
+\mathcal{O}\big( h^{1+\delta}+h^{2\delta}+h^{1+\delta/2+\gamma/2}+ h^{2+\gamma}\big) 
+\mathcal{O}\big(h^{1+3\delta/2} + h^{2+\delta/2+\gamma/2} \big) (\|\nabla^2v_\epsilon\|+\|\nabla^2w_\epsilon\|).
\end{split}
\end{equation*}
Above, we used the following  skew-symmetric matrix field:
$$P^h = \left[\begin{array}{cc} 0 & p^h \\ -p^h & 0\end{array}\right],
\qquad p^h= - h^{\delta/2} \nabla v_\epsilon+h^\delta s. $$

\medskip

\textbullet $~~$ Consider the rotation fields $q^h\in\mathcal{C}^\infty(\bar \omega, SO(3))$:
$$q^h = \exp(-P^h) = Id_3 - P^h +\frac{1}{2}(P^h)^2-\frac{1}{6}(P^h)^3
+ \mathcal{O}(h^{2 \delta}). $$
Then we get:
\begin{equation*}
\begin{split}
q^h \nabla u^h(A^h)^{-1}&(x', hx_3) = Id_3 +
h^\delta \left[\begin{array}{cc} \frac{1}{2}(\nabla
    v_\epsilon)^{\otimes 2}+\nabla w_\epsilon -S_{2\times 2} & 0 \\ 0 &
0 \end{array}\right] \\ & + h^{3\delta/2} \left[\begin{array}{cc}
\mbox{skew}(\nabla v_\epsilon \otimes s) &  (\nabla
w_\epsilon)^T\nabla v_\epsilon\\ -  (\nabla w_\epsilon)^T\nabla
v_\epsilon & 0 \end{array}\right] +\frac{1}{3}(P^h)^{3} \\ & 
+ x_3\Big( h^{1+\delta/2}\left[\begin{array}{cc} -\nabla^2 v_\epsilon &0\\ 0&0\end{array}\right]
- h^{1+\gamma/2}B\Big) \\ & 
+\mathcal{O}\big( h^{1+\delta}+h^{2\delta}+h^{1+\delta/2+\gamma/2}+ h^{2+\gamma}\big) 
+\mathcal{O}\big(h^{1+\delta} (\|\nabla^2v_\epsilon\|+\|\nabla^2w_\epsilon\|)\big).
\end{split}
\end{equation*}
Finally, we apply a rotation field $\bar q^h = \exp(-\bar P^h) = Id_3 - \bar P^h + \mathcal{O}(h^{2
  \delta}) \in\mathcal{C}^\infty(\bar \omega, SO(3)),$ with:
$$\bar P^h = \left[\begin{array}{cc}
    \mbox{skew}\big(h^\delta\nabla w_\epsilon  + h^{3\delta/2}\nabla v_\epsilon \otimes s\big)
&  h^{3\delta/2} (\nabla w_\epsilon)^T\nabla v_\epsilon \\ - h^{3\delta/2} (\nabla w_\epsilon)^T\nabla v_\epsilon &
0 \end{array}\right] + \frac{1}{3}(P^h)^3,$$
to get:
\begin{equation*}
\begin{split}
\bar q^h q^h \nabla u^h(A^h)^{-1}&(x', hx_3) = Id_3 +
h^\delta \left[\begin{array}{cc} \frac{1}{2}(\nabla
 v_\epsilon)^{\otimes 2}+\mbox{sym}\nabla w_\epsilon -S_{2\times 2}  & 0 \\ 0 & 0 \end{array}\right] 
\\ & + x_3\Big( h^{1+\delta/2}\left[\begin{array}{cc} -\nabla^2 v_\epsilon &0\\ 0&0\end{array}\right]
- h^{1+\gamma/2}B\Big) \\ & 
+\mathcal{O}\big( h^{1+\delta}+h^{2\delta}+h^{1+\delta/2+\gamma/2}+ h^{2+\gamma}\big) 
+\mathcal{O}\big(h^{1+\delta} (\|\nabla^2v_\epsilon\|+\|\nabla^2w_\epsilon\|)\big).
\end{split}
\end{equation*}

\medskip

\textbullet $~~$ In conclusion, we obtain the following energy bound,
valid provided that we may use Taylor's expansion of $W$, which here
requires that $h^{1+\delta/2}(\|\nabla^2v_\epsilon\|+\|\nabla^2w_\epsilon\|\big)\to 0$ as $h\to 0$:
\begin{equation*}
\begin{split}
\inf I^h & \leq I^h(u^h) = \int_{\Omega^1}W\Big(\bar q^h q^h \nabla
u^h(A^h)^{-1}(x', hx_3) \Big)\;\mbox{d}(x', x_3) \\ & \leq
C\int_{\Omega^1} \Big(h^{2\delta} \big|\frac{1}{2}(\nabla
 v_\epsilon)^{\otimes 2}+\mbox{sym}\nabla w_\epsilon -S_{2\times 2}  \big|^2 + h^{2+\delta}(\|\nabla^2v_\epsilon\|^2 
+\|\nabla^2w_\epsilon\|^2) \\ & \qquad\qquad \quad + h^{2+\gamma} + h^{4\delta}\Big)\;\mbox{d}(x', x_3)
\end{split}
\end{equation*}
Recalling (\ref{m1}), (\ref{m2}) this leads to:
$$\inf I^h \leq C\big(h^{2\delta}\epsilon^{4a}+h^{2+\delta}\epsilon^{2a-2}+h^{2+\gamma}+h^{4\delta}\big)
= C\big(h^{2\delta+4at}+h^{2+\delta+(2a-2)t}+h^{2+\gamma}+h^{4\delta}\big).$$
Minimizing the right hand side above is equivalent to
maximizing the minimal of the four displayed exponents. We hence
choose $t$ in $\epsilon=h^t$ so that
${2\delta+4at}={2+\delta+(2a-2)t}$, namely $t=\frac{2-\delta}{2a+2}$.
We then obtain:
$$\inf I^h \leq C\big(h^{2\frac{\delta+2a}{a+1}}+h^{2+\gamma}+h^{4\delta}\big)\leq
C\big(h^{(5\delta/3+2/3)-} +h^{2+\gamma}+h^{4\delta}\big) $$
upon recalling that $a\in (0, 1/5)$.  The conclusion of Theorem
\ref{th_general_scaling} follows by a direct inspection.

\smallskip

\subsection{Case $\mathbf{\alpha\geq 4, \gamma\geq 2}$}\label{standard}
Let $\delta\in [2, \gamma]$ and $\alpha\geq 2+\delta$.
Given $v\in \mathcal{C}^\infty(\bar\omega,\R)$, $w\in \mathcal{C}^\infty(\bar\omega,\R^2)$,  and 
$d, \bar d\in \mathcal{C}^\infty(\bar\omega,\R^3)$, we define $u^h\in
\mathcal{C}^\infty(\bar \Omega^h,\R^3)$ by:
\begin{equation}\label{recseq}
u^h = id_3 + h^{\delta/2}\left[\begin{array}{c} 0\\
    v\end{array}\right] + h^{1+\delta/2}\left[\begin{array}{c} w\\ 0\end{array}\right] 
- h^{\delta/2}x_3\left[\begin{array}{c} \nabla v\\ 0\end{array}\right]
+ h^{1+\delta/2}x_3 d + \frac{1}{2}h^{\delta/2}x_3^2\bar d.
\end{equation}
Consider the rotation fields $q^h\in\mathcal{C}^\infty(\bar \omega, SO(3))$:
$$q^h = \exp\Big( h^{\delta/2}\left[\begin{array}{cc} 0 &
    \nabla v\\ -\nabla v & 0\end{array}\right]\Big)= Id_3 + h^{\delta/2}\left[\begin{array}{cc} 0 &
    \nabla v\\ -\nabla v & 0\end{array}\right] - \frac{1}{2}h^\delta \left[\begin{array}{cc} 
    (\nabla v)^{\otimes 2} & 0 \\ 0 & |\nabla v|^2 \end{array}\right]
+ \mathcal{O}(h^{\frac 32 \delta}). $$
Above, the constants on $\mathcal{O}$ depend on $\|\nabla v\|_{L^\infty}$.
Thus, we obtain for all $(x', x_3)\in \Omega^1$:
\begin{equation*}
\begin{split}
q^h\nabla u^h(x', hx_3) = & \;Id_3 + \frac{1}{2}h^\delta \left[\begin{array}{cc} 
    (\nabla v)^{\otimes 2} & 0 \\ 0 & |\nabla v|^2 \end{array}\right]+
h^{1+\delta/2}\left[\begin{array}{cc} \nabla w & 0 \\ 0 & 0\end{array}\right] 
- h^{1+\delta/2}x_3\left[\begin{array}{cc} \nabla^2 v &0\\ 0&0\end{array}\right]
\\ & + h^{1+\delta/2}\left[\begin{array}{cc} 0 &d+x_3\bar d\end{array}\right]
+ \mathcal{O}(h^{2+\delta/2}),
\end{split}
\end{equation*}
where the bound in $\mathcal{O}$ depends on the $L^\infty(\omega)$
norms of: $\nabla v$, $\nabla^2 v$, $\nabla w$, $d$, $\bar d$, $\nabla d$, $\nabla\bar d$.

Recall that: $(A^h(x',hx_3))^{-1} = Id_3 - h^{\alpha/2}S -
h^{1+\gamma/2}x_3B + \mathcal{O}(h^{\alpha\wedge(2+\gamma)})$. Consequently:
\begin{equation*}
\begin{split}
\big(q^h(\nabla u^h) &(A^h)^{-1} \big) (x', hx_3) =  Id_3  - h^{\alpha/2}S - h^{1+\gamma/2}x_3 B
+ \frac{1}{2}h^\delta \left[\begin{array}{cc} 
    (\nabla v)^{\otimes 2} & 0 \\ 0 & |\nabla v|^2 \end{array}\right]
\\ & + h^{1+\delta/2}\left[\begin{array}{cc} \begin{array}{c}\nabla w \\
      0 \end{array} & d\end{array}\right] 
- h^{1+\delta/2}x_3\left[\begin{array}{cc} \begin{array}{c}\nabla^2 v
      \\ 0\end{array} & -\bar d \end{array}\right] + \mathcal{O}(h^{2+\delta/2}).
\end{split}
\end{equation*}
It follows that the integrand $W\big(q^h(\nabla u^h) (A^h)^{-1} \big)$ equals:
\begin{equation*}
\begin{split}
\mathcal{Q}_3\Big(- h^{\alpha/2}S + &\frac{1}{2}h^\delta \left[\begin{array}{cc} 
    (\nabla v)^{\otimes 2} & 0 \\ 0 & |\nabla v|^2 \end{array}\right]
 + h^{1+\delta/2}\left[\begin{array}{cc} \begin{array}{c}\nabla w \\
      0 \end{array} & d\end{array}\right] \\ & \qquad \quad - h^{1+\gamma/2}x_3 B
- h^{1+\delta/2}x_3\left[\begin{array}{cc} \begin{array}{c}\nabla^2 v
      \\ 0\end{array} & -\bar d \end{array}\right] \Big) + o\big(h^{2+\delta}\big),
\end{split}
\end{equation*}
which yields:
\begin{equation*}
\begin{split}
\lim_{h\to 0}&\frac{1}{h^{2+\delta}}I^h(u^h) = 
\frac{1}{24}\lim_{h\to 0} \int_\omega \mathcal{Q}_3\Big(
h^{\gamma/2-\delta/2} B + \left[\begin{array}{cc} \begin{array}{c}\nabla^2 v
      \\ 0\end{array} & -\bar d \end{array}\right] \Big) \dxp \\ & +\frac{1}{2}\lim_{h\to 0}
\int_\omega \mathcal{Q}_3\Big(h^{\alpha/2-(1+\delta/2)}S -\frac{1}{2}h^{\delta/2-1} \left[\begin{array}{cc} 
    (\nabla v)^{\otimes 2} & 0 \\ 0 & |\nabla v|^2 \end{array}\right]
 - \left[\begin{array}{cc} \begin{array}{c}\nabla w \\
      0 \end{array} & d\end{array}\right] \Big)\dxp.  
\end{split}
\end{equation*}
Setting $d $ and $\bar d$ to be affine functions of $\nabla w$,
$(\nabla v)^{\otimes 2}$, $\nabla^2v$ and $S, B$, so that $\mathcal{Q}_3$ above gets replaced by $\mathcal{Q}_2$
evaluated on the principal $2\times 2$ minors of the respective
arguments, we obtain the claimed convergence to the energy functional in (\ref{lowbdbd}):
\begin{equation}\label{upbd}
\lim_{h\to 0}\frac{1}{h^{2+\delta}}I^h(u^h) = I(v,w).
\end{equation}

Finally, we observe that given $v\in W^{2,2}(\omega, \R)$ and $w\in
W^{1,2}(\omega, \R^2)$, one can first find their smooth approximations $v_n$,
$w_n$ and construct the recovery sequence $\{u^h\}_{h\to 0}$ as the
diagonal sequence given by formula in  (\ref{recseq}) with $v$, $w$
replaced by $v_n$, $w_n$. Taking $n=n(h)\to\infty$ as $h\to 0$ at a
sufficiently slow rate, guarantees the same limit as in (\ref{upbd}). 

\subsection{Case $\mathbf{S_{2\times 2}\equiv 0}$ and $\mathbf{\alpha, \gamma\geq 2}$}\label{addi} 
Let $\delta\in [2, \alpha\wedge\gamma]$.
Given $v\in \mathcal{C}^\infty(\bar\omega,\R)$, $w\in \mathcal{C}^\infty(\bar\omega,\R^2)$,  
$d, \bar d\in \mathcal{C}^\infty(\bar\omega,\R^3)$, and denoting
$s=(S_{31}, S_{32})$, we define $u^h\in \mathcal{C}^\infty(\bar \Omega^h,\R^3)$ by:
\begin{equation}\label{recseq5}
\begin{split}
u^h = & \; id_3 + h^{\alpha/2}x_3 \left[\begin{array}{c} 2s  \\ S_{33}\end{array}\right]
\\ & + h^{\delta/2}\left[\begin{array}{c} 0\\
    v\end{array}\right] + h^{1+\delta/2}\left[\begin{array}{c} w\\ 0\end{array}\right] 
- h^{\delta/2}x_3\left[\begin{array}{c} \nabla v\\ 0\end{array}\right]
+ h^{1+\delta/2}x_3 d + \frac{1}{2}h^{\delta/2}x_3^2\bar d.
\end{split}
\end{equation}
As in subsection \ref{standard}, we apply rotations $q^h = \exp\Big( h^{\delta/2}\left[\begin{array}{cc} 0 &
    \nabla v\\ -\nabla v & 0\end{array}\right]\Big)$ and obtain on $\Omega^1$:
\begin{equation*}
\begin{split}
q^h\nabla u^h(x', hx_3) = & \;Id_3 +\frac{1}{2}h^\delta \left[\begin{array}{cc} 
    (\nabla v)^{\otimes 2} & 0 \\ 0 & |\nabla v|^2 \end{array}\right] +
h^{1+\delta/2}\left[\begin{array}{cc} \nabla w & 0 \\ 0 & 0\end{array}\right] 
- h^{1+\delta/2}x_3\left[\begin{array}{cc} \nabla^2 v &0\\ 0&0\end{array}\right]
\vspace{2mm} \\ & +  h^{\alpha/2} \left[\begin{array}{cc} 
0 & \begin{array}{c} 2s\\ S_{33}\end{array}  \end{array}\right]
+ h^{1+\alpha/2}x_3 \left[\begin{array}{cc} 
    2\nabla s & 0 \\ \nabla S_{33} & 0\end{array}\right]
- h^{\delta/2+ \alpha/2} \left[\begin{array}{cc} 
  0 & \begin{array}{c} S_{33}\nabla v  \\ - 2\langle s,
    \nabla v\rangle \end{array}\end{array}\right] \vspace{1mm} \\ &
+ h^{1+\delta/2}\left[\begin{array}{cc} 0 &d+x_3\bar
    d\end{array}\right] + \mathcal{O}(h^{2+\delta/2}),
\end{split}
\end{equation*}
where the bound in $\mathcal{O}$ depends on the $L^\infty(\omega)$
norms of: $\nabla v$, $\nabla^2 v$, $\nabla w$, $d$, $\bar d$, $\nabla d$, $\nabla\bar d$.
We now apply a further rotation $\bar q^h = \exp\Big( h^{\alpha/2}\left[\begin{array}{cc} 0 &
    -s\\ s& 0\end{array}\right]\Big)$ to get:
\begin{equation*}
\begin{split}
\bar q^h &q^h\nabla u^h(x', hx_3) = Id_3 +\frac{1}{2}h^\delta \left[\begin{array}{cc} 
    (\nabla v)^{\otimes 2} & 0 \\ 0 & |\nabla v|^2 \end{array}\right] +
h^{1+\delta/2}\left[\begin{array}{cc} \nabla w & 0 \\ 0 & 0\end{array}\right] 
- h^{1+\delta/2}x_3\left[\begin{array}{cc} \nabla^2 v &0\\ 0&0\end{array}\right]
\vspace{2mm} \\ & +  h^{\alpha/2} S + \frac{1}{2}h^{\alpha/2} 
\left[\begin{array}{cc}  2s^{\otimes 2} & -2S_{33}s \\ 0 & 3|s|^2 \end{array}\right]
+ h^{1+\alpha/2} x_3 \left[\begin{array}{cc}  2\nabla s & 0 \\ \nabla S_{33} & 0\end{array}\right]
+ h^{\delta/2+ \alpha/2} \left[\begin{array}{cc} 
  0 & \begin{array}{c} S_{33}\nabla v  \\ - 2\langle s,
    \nabla v\rangle \end{array}\end{array}\right] \vspace{1mm} \\ &
+ h^{1+\delta/2}\left[\begin{array}{cc} 0 &d+x_3\bar
    d\end{array}\right] + \mathcal{O}(h^{2+\delta/2}),
\end{split}
\end{equation*}

By the second order expansion of the inverse:  $(A^h(x',hx_3))^{-1} = Id_3 - h^{\alpha/2}S -
h^{1+\gamma/2}x_3B + h^\alpha S^2 + \mathcal{O}(h^{1+\delta})$, we obtain:
\begin{equation*}
\begin{split}
\big(\bar q^h q^h(\nabla u^h) &(A^h)^{-1} \big) (x', hx_3) =  Id_3  - h^{1+\gamma/2}x_3 B
+ \frac{1}{2}h^\delta \left[\begin{array}{cc} 
    (\nabla v)^{\otimes 2} & 0 \\ 0 & |\nabla v|^2 \end{array}\right] \vspace{2mm} 
\\ & -\frac{1}{2}h^\alpha \left[\begin{array}{cc}  s^{\otimes 2} &
    2 S_{33}s \\ 0 & -2 S_{33}^2 \end{array}\right]
+ h^{1+\alpha/2}x_3 \left[\begin{array}{cc}  2\nabla s & 0 \\ \nabla S_{33} & 0\end{array}\right]
+ h^{\delta/2+ \alpha/2} \left[\begin{array}{cc} 
  0 & S_{33}\nabla v  \\ 0 & - 2\langle s,
    \nabla v\rangle \end{array}\right] \vspace{2mm} \\ & 
+ h^{1+\delta/2}\left[\begin{array}{cc} \begin{array}{c}\nabla w \\
      0 \end{array} & d\end{array}\right] 
- h^{1+\delta/2}x_3\left[\begin{array}{cc} \begin{array}{c}\nabla^2 v
      \\ 0\end{array} & -\bar d \end{array}\right] + \mathcal{O}(h^{2+\delta/2}).
\end{split}
\end{equation*}
It follows that the integrand $W\big(\bar q^h q^h(\nabla u^h) (A^h)^{-1} \big)$ equals:
\begin{equation*}
\begin{split}
\mathcal{Q}_3\Big( & - \frac{1}{2}h^\alpha \left[\begin{array}{cc}  s^{\otimes 2} &
     S_{33}s \\  S_{33}s & -2 S_{33}^2 \end{array}\right] + h^{\delta/2+ \alpha/2} \left[\begin{array}{cc} 
  0 &\frac{1}{2} S_{33}\nabla v  \\ \frac{1}{2} S_{33}\nabla v  & - 2\langle s,
    \nabla v\rangle \end{array}\right] \\ & + \frac{1}{2}h^\delta \left[\begin{array}{cc} 
    (\nabla v)^{\otimes 2} & 0 \\ 0 & |\nabla v|^2 \end{array}\right]
 + h^{1+\delta/2}\left[\begin{array}{cc} \begin{array}{c}\nabla w \\
      0 \end{array} & d\end{array}\right] \\ & 
+ h^{1+\alpha/2}x_3 \left[\begin{array}{cc}  2\,\mathrm{sym} \nabla s &
    \frac{1}{2}\nabla S_{33} \\ \frac{1}{2}\nabla S_{33} &
    0\end{array}\right]  - h^{1+\gamma/2}x_3 B
+ h^{1+\delta/2}x_3\left[\begin{array}{cc} \begin{array}{c} -\nabla^2 v
      \\ 0 \end{array} & \bar d \end{array}\right] \Big) + o\big(h^{2+\delta}\big),
\end{split}
\end{equation*}
which yields:
\begin{equation*}
\begin{split}
\lim_{h\to 0}&\frac{1}{h^{2+\delta}}I^h(u^h) =  \\ & 
\frac{1}{24}\lim_{h\to 0} \int_\omega \mathcal{Q}_3\Big(h^{\alpha/2-\delta/2}
\left[\begin{array}{cc}  2\,\mathrm{sym} \nabla s & \frac{1}{2}\nabla
    S_{33} \\ \frac{1}{2}\nabla S_{33} & 0\end{array}\right] 
+ h^{\gamma/2-\delta/2} B + \left[\begin{array}{cc} \begin{array}{c} -\nabla^2 v
      \\ 0\end{array} & \bar d \end{array}\right] \Big) \dxp \\ & 
+ \frac{1}{2}\lim_{h\to 0} \int_\omega \mathcal{Q}_3\Big(
- \frac{1}{2} h^{\alpha - (1+\delta/2)} \left[\begin{array}{cc}  s^{\otimes 2} &
     S_{33}s \\  S_{33}s  & -2 S_{33}^2 \end{array}\right] 
+ h^{\alpha/2-1} \left[\begin{array}{cc}  0 & \frac{1}{2} S_{33}\nabla v  \\ \frac{1}{2} S_{33}\nabla v  & - 2\langle s,
    \nabla v\rangle \end{array}\right]  \\ & \qquad\qquad\qquad \quad
+ \frac{1}{2}h^{\delta/2-1} \left[\begin{array}{cc} 
    (\nabla v)^{\otimes 2} & 0 \\ 0 & |\nabla v|^2 \end{array}\right]
 + \left[\begin{array}{cc} \begin{array}{c}\nabla w \\
      0 \end{array} & d\end{array}\right] \Big)\dxp.  
\end{split}
\end{equation*}
In each of the cases of ordering $\alpha,\gamma$ and $\delta$, we may
set $d $ and $\bar d$ to be affine functions of $\nabla w$, 
$(\nabla v)^{\otimes 2}$, $\nabla^2v$ and $S, B$, so that $\mathcal{Q}_3$ above gets replaced by $\mathcal{Q}_2$
evaluated on the principal $2\times 2$ minors of the respective
arguments. Thus, we obtain the claimed convergence to the energy
functional in (\ref{lowbdbd5}) in case when the displacements $v$ and $w$ are smooth.
The general case $v\in W^{2,2}(\omega, \R)$ and $w\in
W^{1,2}(\omega, \R^2)$ follows by a diagonal argument as in subsection
\ref{standard}. 

\section{Discussion of $\inf I$ and the optimality conditions}\label{sec_Helm}

We start by proving two decomposition results for matrices. Recall
that given $F\in \R^{2\times 2}$, its cofactor matrix is: 
$\mathrm{cof}\,F = \left[\begin{array}{cc} F_{22} & - F_{12}\\-F_{21}&F_{11}\end{array}\right]$, while for fields
$w:\omega\to\R^2$ and $\alpha:\omega\to \R$ we denote:
$\mathrm{curl}\,w = \partial_1w_2-\partial_2w_1$ and $\nabla^\perp\alpha = (-\partial_2\alpha, \partial_1\alpha)$.
The congruency symbol in: $a\cong b$ means that $a\leq C b$ and $b\leq Ca$ with a constant
$C$ depending only on $\omega$.

\begin{lemma}\label{lem_dist_hes}
Let $\omega\subset\R^2$ be an open, bounded, simply connected domain with
Lipschitz boundary. For every $F\in L^2(\omega,\mathbb{R}^{2\times 2}_{sym})$
there exist unique  $v\in W^{2,2}(\omega, \R)$, $\phi\in W^{1,2}_0(\omega,\R^2)$ satisfying:
$$F = \nabla^2v + \mathrm{cof}\,\mathrm{sym}\nabla\phi.$$
Moreover, there hold the following equivalences with $\mathrm{curl}\,F$ (taken row-wise):
\begin{equation*}
\begin{split}
\|F-\nabla^2v\|_{L^2}&=\mathrm{dist}_{L^2}\big(F, \{\nabla^2r;~ r\in
W^{2,2}(\omega,\R)\}\big) \\ & \cong \|\mathrm{curl}\,F\|_{H^{-1}(\omega)}=
\mathrm{dist}_{L^2}\big(F, \{\nabla w;~ w\in W^{1,2}(\omega, \R^2)\}\big).
\end{split}
\end{equation*}
\end{lemma}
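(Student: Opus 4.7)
The plan is to realize the asserted decomposition as an orthogonal splitting of $L^2(\omega,\R^{2\times 2}_{sym})$ into the two subspaces
\begin{equation*}
\mathcal{V}_1 = \{\nabla^2 v ;\, v\in W^{2,2}(\omega,\R)\}, \qquad
\mathcal{V}_2 = \{\mathrm{cof}(\mathrm{sym}\,\nabla\phi) ;\, \phi \in W^{1,2}_0(\omega,\R^2)\},
\end{equation*}
and then to read off both norm identities from the associated projections. The central algebraic tool is the $2$D cofactor identity $\mathrm{cof}\,M = (\mathrm{tr}\,M)\,I - M^T$, which for symmetric $M$ gives $\mathrm{cof}(\mathrm{sym}\,\nabla\phi) = (\mathrm{div}\,\phi)\,I - \mathrm{sym}\,\nabla\phi$. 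Using this identity, the symmetry of $\nabla^2 v$, and density of smooth test functions, two integrations by parts verify that
\begin{equation*}
\int_\omega \nabla^2 v : \mathrm{cof}(\mathrm{sym}\,\nabla\phi)\dxp = \int_\omega (\Delta v)(\mathrm{div}\,\phi)\dxp - \int_\omega \nabla^2 v : \nabla\phi\dxp = 0,
\end{equation*}
since on $C^\infty_c$ approximations both integrals on the right evaluate to $-\int_\omega \partial_i v\,\partial_i\partial_j\phi_j\dxp$. Hence $\mathcal{V}_1 \perp \mathcal{V}_2$ in $L^2$.

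I would next show that the linear map $(v,\phi)\mapsto \nabla^2 v + \mathrm{cof}(\mathrm{sym}\,\nabla\phi)$ from $(W^{2,2}/\{\text{affine}\})\times W^{1,2}_0$ onto $L^2(\omega,\R^{2\times 2}_{sym})$ is an isomorphism. Closed range is immediate from the orthogonality, the identity $|\mathrm{cof}\,M|=|M|$ for $2{\times}2$ matrices, and Korn's inequality on $W^{1,2}_0$, which give
\begin{equation*}
\|\nabla^2 v + \mathrm{cof}(\mathrm{sym}\,\nabla\phi)\|_{L^2}^2 = \|\nabla^2 v\|_{L^2}^2 + \|\mathrm{sym}\,\nabla\phi\|_{L^2}^2 \cong \|\nabla^2 v\|_{L^2}^2 + \|\nabla\phi\|_{L^2}^2.
\end{equation*}
For triviality of the cokernel, let $G\in L^2(\omega,\R^{2\times 2}_{sym})$ be orthogonal to both subspaces. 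Self-adjointness of $\mathrm{cof}$ on $\R^{2\times 2}_{sym}$ turns $G\perp\mathcal{V}_2$ into $\int_\omega \mathrm{cof}\,G : \nabla\phi\dxp = 0$ for every $\phi\in W^{1,2}_0$, whence $\mathrm{div}(\mathrm{cof}\,G)=0$ distributionally. In coordinates this reads $\partial_2 G_{11}=\partial_1 G_{12}$ and $\partial_1 G_{22}=\partial_2 G_{12}$; two applications of the Poincar\'e lemma on the simply connected $\omega$ then produce $G=\nabla^2 v$ for some $v\in W^{2,2}$, and $G\perp \mathcal{V}_1$ finally forces $G=0$. This yields existence, uniqueness (with $v$ modulo affine), and the identity $\|F-\nabla^2 v\|_{L^2}=\mathrm{dist}_{L^2}(F,\{\nabla^2 r\})$ as the orthogonal projection distance.

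For the congruency with $\|\mathrm{curl}\,F\|_{H^{-1}(\omega)}$, I write $F-\nabla^2 v = \mathrm{cof}(\mathrm{sym}\,\nabla\phi)$. By Korn and the elementary identity $\|\nabla\phi\|_{L^2}^2 = \|\mathrm{div}\,\phi\|_{L^2}^2+\|\mathrm{curl}\,\phi\|_{L^2}^2$ for $\phi\in W^{1,2}_0$ (a consequence of $\int_\omega\det\nabla\phi\dxp=0$ for zero boundary data), the claim reduces to comparing $\|\mathrm{div}\,\phi\|_{L^2}+\|\mathrm{curl}\,\phi\|_{L^2}$ with $\|\mathrm{curl}\,F\|_{H^{-1}}$. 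A direct computation using the cofactor identity and $\mathrm{curl}(\mathrm{sym}\,\nabla\phi)=\tfrac{1}{2}\nabla(\mathrm{curl}\,\phi)$ gives
\begin{equation*}
\mathrm{curl}\,F \;=\; \nabla^\perp(\mathrm{div}\,\phi) \;-\; \tfrac{1}{2}\,\nabla(\mathrm{curl}\,\phi),
\end{equation*}
so $\mathrm{div}(\mathrm{curl}\,F) = -\tfrac{1}{2}\Delta(\mathrm{curl}\,\phi)$ and $\mathrm{curl}(\mathrm{curl}\,F) = \Delta(\mathrm{div}\,\phi)$ both lie in $H^{-2}(\omega)$ with norms controlled by $\|\mathrm{curl}\,F\|_{H^{-1}}$. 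Since $\mathrm{div}\,\phi$ and $\mathrm{curl}\,\phi$ are mean-zero $L^2$ functions (as $\phi\in W^{1,2}_0$), the standard $H^{-2}$-to-$L^2$ elliptic inversion of $\Delta$ on the simply connected Lipschitz domain $\omega$ yields $\|\mathrm{div}\,\phi\|_{L^2}+\|\mathrm{curl}\,\phi\|_{L^2}\le C\|\mathrm{curl}\,F\|_{H^{-1}}$; the opposite bound is immediate. The final equality $\|\mathrm{curl}\,F\|_{H^{-1}} = \mathrm{dist}_{L^2}(F,\{\nabla w\})$ is then the standard Helmholtz duality: for simply connected $\omega$ one has the orthogonal splitting $L^2(\omega,\R^{2\times 2}) = \{\nabla w;\,w\in W^{1,2}(\omega,\R^2)\}\oplus\{\nabla^\perp\eta;\,\eta\in H^1_0(\omega,\R^2)\}$, and the pairing identity $\int_\omega F:\nabla^\perp\eta\dxp = -\langle\mathrm{curl}\,F,\eta\rangle$ combined with $\|\nabla^\perp\eta\|_{L^2}=\|\nabla\eta\|_{L^2}$ realizes the equality via the dual norm on $H^1_0(\omega,\R^2)$.

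The main obstacle is the sufficiency direction $\mathrm{dist}_{L^2}(F,\{\nabla^2 r\}) \le C\|\mathrm{curl}\,F\|_{H^{-1}}$ in the third paragraph: it is precisely there that the simple connectedness and Lipschitz regularity of $\omega$ enter, through the elliptic inversion of $\Delta$ on mean-zero data with $H^{-2}$ right-hand sides. Everything else is bookkeeping around the cofactor identity, Korn's inequality on $W^{1,2}_0$, and the orthogonal Helmholtz decomposition of $L^2(\omega,\R^{2\times 2})$.
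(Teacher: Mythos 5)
Your first two paragraphs are sound and run parallel to the paper's argument: the orthogonality of $\{\nabla^2 v\}$ and $\{\mathrm{cof}\,\mathrm{sym}\,\nabla\phi;~\phi\in W^{1,2}_0\}$, Korn's inequality, the identity $|\mathrm{cof}\,M|=|M|$, and the Poincar\'e-lemma identification of the orthogonal complement together give the decomposition, its uniqueness (with $v$ determined modulo affine maps), and the identity $\|F-\nabla^2v\|_{L^2}=\mathrm{dist}_{L^2}\big(F,\{\nabla^2 r\}\big)$. (The paper reaches the same point by projecting onto the second subspace and invoking de Rham's theorem for the remainder; the two routes are interchangeable, though you should also record that $\{\nabla^2 r\}$ is closed in $L^2$, via the second-order Poincar\'e inequality on a Lipschitz domain, before speaking of orthogonal projection onto it.)

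The third paragraph, however, has a genuine gap at exactly the step you flag as the main obstacle. There is no ``standard $H^{-2}$-to-$L^2$ elliptic inversion of $\Delta$ on mean-zero data'': the kernel of $\Delta$ acting on $L^2(\omega)$ with no boundary condition is the infinite-dimensional space of harmonic functions, and the mean-zero normalization removes only the constants. Concretely, $u=x_1-\fint_\omega x_1$ is mean-zero with $\Delta u=0$, so no inequality of the form $\|u\|_{L^2}\leq C\|\Delta u\|_{H^{-2}(\omega)}$ can hold. In your reduction, the quantities $\mathrm{div}\,(\mathrm{curl}\,F)=-\tfrac12\Delta(\mathrm{curl}\,\phi)$ and $\mathrm{curl}\,(\mathrm{curl}\,F)=\Delta(\mathrm{div}\,\phi)$ therefore control $\mathrm{div}\,\phi$ and $\mathrm{curl}\,\phi$ only up to harmonic functions; the information that pins those harmonic parts down is the boundary condition $\phi\in W^{1,2}_0(\omega,\R^2)$, which couples $\mathrm{div}\,\phi$ and $\mathrm{curl}\,\phi$ along $\partial\omega$ and is discarded the moment you pass to $\mathrm{div}$ and $\mathrm{curl}$ of $\mathrm{curl}\,F$. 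The estimate you want, $\|\mathrm{sym}\,\nabla\phi\|_{L^2}\leq C\|\mathrm{curl}\,F\|_{H^{-1}(\omega)}$, is true but is obtained in the paper by duality rather than by inverting $\Delta$: the Euler--Lagrange equation of the projection gives $\|\mathrm{sym}\,\nabla\phi\|_{L^2}=\sup\big\{\int_\omega\langle F:\mathrm{cof}\,\mathrm{sym}\,\nabla\alpha\rangle\dxp;~\alpha\in W^{1,2}_0(\omega,\R^2),~\|\mathrm{sym}\,\nabla\alpha\|_{L^2}\leq 1\big\}$; since $F$ is symmetric one may replace $\mathrm{cof}\,\mathrm{sym}\,\nabla\alpha$ by $\mathrm{cof}\,\nabla\alpha$, whose rows are the perpendicular gradients $\nabla^\perp(-\alpha_2)$ and $\nabla^\perp\alpha_1$, and Korn's inequality converts the constraint into $\|\nabla\alpha\|_{L^2}\leq 1$, so the supremum is comparable to the dual norm $\|\mathrm{curl}\,F\|_{H^{-1}(\omega)}$, which in turn equals $\mathrm{dist}_{L^2}\big(F,\{\nabla w\}\big)$ by the Helmholtz splitting you describe. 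If you want to keep your formulation, you must replace the elliptic-inversion step by this duality computation.
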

\begin{proof}
Since the linear space $\{\mathrm{cof}\,\mathrm{sym}\,\nabla
\phi;~\phi\in W_0^{1,2}(\omega,\R^2)\}$ is a closed subspace of
$L^2(\omega,\R^{2\times 2})$, the following minimization problem has the unique solution:
$$\mathrm{minimize}\,\Big\{\int_{\omega} |F-\mathrm{cof}\,\mathrm{sym}\,\nabla
\phi|^2\dxp;~\phi\in W_0^{1,2}(\omega,\R^2)\Big\},$$
identified as the solution to the Euler-Lagrange equation:
\begin{equation}\label{otto}
\int_{\omega}\langle F: \mathrm{cof}\,\mathrm{sym}\,\nabla\alpha\rangle\dxp = 
\int_{\omega}\langle \mathrm{sym}\,\nabla\phi : \mathrm{sym}\,\nabla\alpha\rangle\dxp 
\qquad\mbox{for all }\; \alpha\in W_0^{1,2}(\omega,\R^2).
\end{equation}
By Korn's inequality, the right hand side above is a scalar product on
$W_0^{1,2}(\omega,\R^2)$. A basic application of the Riesz
representation theorem yields existence of the unique solution $\phi$, and:
\begin{equation*}
\|\mathrm{sym}\,\nabla\phi\|_{L^2} = \sup\Big\{\int_{\omega}\langle  F:
\mathrm{cof}\,\mathrm{sym}\,\nabla\alpha\rangle\dxp;~ \alpha\in
W_0^{1,2}(\omega,\R^2),~~ \|\mathrm{sym}\nabla\alpha\|_{L^2}\leq 1\Big\}.
\end{equation*}
Observing that $\langle  F: \mathrm{cof}\,\mathrm{sym}\,\nabla\alpha\rangle
= \langle  F: \mathrm{cof}\,\nabla\alpha\rangle= \big\langle  F:
\left[\begin{array}{c}\nabla^\perp (-\alpha_2)\\
    \nabla^\perp\alpha_1\end{array}\right]\big\rangle$ and using Korn's
inequality again, we get:
\begin{equation*}
\begin{split}
\|\mathrm{sym}\,\nabla\phi\|_{L^2} \cong & \sup\Big\{\int_{\omega}\big\langle  F:
\left[\begin{array}{c}\nabla^\perp \alpha_1\\
    \nabla^\perp\alpha_2\end{array}\right]\big\rangle \dxp;~ \alpha\in
W_0^{1,2}(\omega,\R^2),~~ \|\nabla\alpha\|_{L^2}\leq 1\Big\}\\  \cong &
\sup\Big\{\int_{\omega}\big\langle  (F_{11}, F_{12}), \nabla^\perp
\alpha_1\big \rangle;~ \alpha_1\in W_0^{1,2}(\omega,\R),~~
\|\nabla\alpha_1\|_{L^2}\leq 1\Big\} \\ & + 
\sup\Big\{\int_{\omega}\big\langle  (F_{21}, F_{22}), \nabla^\perp
\alpha_2\big \rangle;~ \alpha_2\in W_0^{1,2}(\omega,\R),~~ \|\nabla\alpha_2\|_{L^2}\leq 1\Big\}
\\ \cong & \, \|\mathrm{curl}\,(F_{11}, F_{12})\|_{H^{-1}(\omega)} + \|\mathrm{curl}\,(F_{21}, F_{22})\|_{H^{-1}(\omega)}
\\ \cong & \, \|\mathrm{curl}\,F\|_{H^{-1}(\omega)} \cong
\mathrm{dist}_{L^2}\big(F, \{\nabla w;~ w\in W^{1,2}(\omega, \R^2)\}\big). 
\end{split}
\end{equation*}

Finally, from (\ref{otto}) we deduce that:
$$\int_{\omega}\big\langle  F - \mathrm{cof}\,\mathrm{sym}\,\nabla\phi :
\left[\begin{array}{c}\nabla^\perp \alpha_1\\
    \nabla^\perp\alpha_2\end{array}\right]\big\rangle \dxp =
0 \qquad\mbox{ for all }\; \alpha_1, \alpha_2\in W_0^{1,2}(\omega,\R).$$
Hence, by de Rham's theorem there must be: $F-\mathrm{cof}\,\mathrm{sym}\,\nabla\phi
=\nabla w$ for some $w\in W^{1,2}(\omega,\R^2)$. Since
$F-\mathrm{cof}\,\mathrm{sym}\,\nabla\phi$ is symmetric, we get that
$\nabla w = \nabla^2 v$ for some $v\in W^{2,2}(\omega, \R)$.
\end{proof}

\begin{lemma}\label{lem_dist_symgrad}
Let $\omega\subset\R^2$ be open, bounded, simply connected, with
Lipschitz boundary. For every $F\in L^2(\omega,\mathbb{R}^{2\times 2}_{sym})$,
there exist unique  $r\in W^{2,2}_0(\omega, \R)$ and $w\in W^{1,2}(\omega,\R^2)$ such that:
$$F = \mathrm{cof}\,\nabla^2r + \mathrm{sym}\nabla w.$$
Moreover, there hold the following equivalences with the scalar field $\mathrm{curl}^T\mathrm{curl}\,F$:
\begin{equation*}
\begin{split}
\|F-\mathrm{sym}\,\nabla w\|_{L^2}&=\mathrm{dist}_{L^2}\big(F,
\{\mathrm{sym}\,\nabla \phi;~ \phi\in
W^{1,2}(\omega,\R^2)\}\big) \cong \|\mathrm{curl}^T\mathrm{curl}\,F\|_{H^{-2}(\omega)}.
\end{split}
\end{equation*}
\end{lemma}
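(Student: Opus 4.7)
My plan is to mirror the strategy of Lemma \ref{lem_dist_hes}, exchanging the roles of $\nabla^2 v$ and $\mathrm{cof}\,\mathrm{sym}\,\nabla \phi$: I would minimize
$$\min\Big\{\int_\omega |F-\mathrm{cof}\,\nabla^2 r|^2\dxp;~ r\in W^{2,2}_0(\omega,\R)\Big\}.$$
The key identity is $\langle \mathrm{cof}\,A:\mathrm{cof}\,B\rangle=\langle A:B\rangle$ for all $A,B\in\R^{2\times 2}_{sym}$, which reduces the associated bilinear form to $\int_\omega\langle\nabla^2 r:\nabla^2 s\rangle\dxp$. Since $\|\nabla^2\cdot\|_{L^2}$ is an equivalent scalar product on $W^{2,2}_0(\omega,\R)$, the Riesz representation theorem applied to the Euler-Lagrange equation
$$\int_\omega \langle F:\mathrm{cof}\,\nabla^2 s\rangle\dxp = \int_\omega \langle\nabla^2 r:\nabla^2 s\rangle\dxp\qquad\mbox{for all } s\in W^{2,2}_0(\omega,\R)$$
yields the unique minimizer $r$.

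Next I would identify the residual $G=F-\mathrm{cof}\,\nabla^2 r$. Expanding $\mathrm{cof}\,\nabla^2 s$ and using symmetry of $F$ gives $\langle F:\mathrm{cof}\,\nabla^2 s\rangle=F_{11}\partial^2_{22}s-2F_{12}\partial^2_{12}s+F_{22}\partial^2_{11}s$, so the left hand side above equals the $H^{-2}(\omega)\times W^{2,2}_0(\omega,\R)$ duality pairing $\langle \mathrm{curl}^T\mathrm{curl}\,F,s\rangle$. Testing the Euler-Lagrange equation against $s\in\mathcal{C}_c^\infty(\omega)$ then shows $\mathrm{curl}^T\mathrm{curl}\,G=0$ in $\mathcal{D}'(\omega)$, and the classical Saint-Venant compatibility result on simply connected Lipschitz domains yields $G=\mathrm{sym}\,\nabla w$ for some $w\in W^{1,2}(\omega,\R^2)$, determined up to an infinitesimal rigid motion. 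Uniqueness of $r$ then follows since any two valid $r_1,r_2\in W^{2,2}_0$ give $\mathrm{cof}\,\nabla^2(r_1-r_2)=\mathrm{sym}\,\nabla(w_2-w_1)$; applying $\mathrm{curl}^T\mathrm{curl}$ and using $\mathrm{curl}^T\mathrm{curl}(\mathrm{cof}\,\nabla^2\cdot)=\Delta^2$ gives $\Delta^2(r_1-r_2)=0$, hence $r_1=r_2$.

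For the distance characterization, I would first establish the $L^2$-orthogonality of the two summands: approximating $r\in W^{2,2}_0$ by $r_n\in\mathcal{C}_c^\infty(\omega)$, integration by parts combined with the row-wise divergence-free identity $\mathrm{div}(\mathrm{cof}\,\nabla^2 r_n)=0$ gives $\int_\omega\langle\mathrm{cof}\,\nabla^2 r_n:\nabla\phi\rangle\dxp=0$ for every $\phi\in W^{1,2}(\omega,\R^2)$, which passes to the limit. Consequently $\|F-\mathrm{sym}\,\nabla w\|_{L^2}=\|\mathrm{cof}\,\nabla^2 r\|_{L^2}=\|\nabla^2 r\|_{L^2}$ realizes the $L^2$-distance from $F$ to $\{\mathrm{sym}\,\nabla\phi\}$. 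The equivalence with $\|\mathrm{curl}^T\mathrm{curl}\,F\|_{H^{-2}(\omega)}$ is then read directly from the Euler-Lagrange equation:
$$\|\nabla^2 r\|_{L^2}=\sup_{s\in W^{2,2}_0,\,\|\nabla^2 s\|_{L^2}\leq 1}\langle\mathrm{curl}^T\mathrm{curl}\,F,s\rangle\cong\|\mathrm{curl}^T\mathrm{curl}\,F\|_{H^{-2}(\omega)},$$
the last equivalence using the fact that $\|\nabla^2\cdot\|_{L^2}$ is equivalent to the full $W^{2,2}$ norm on $W^{2,2}_0(\omega,\R)$.

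The principal technical point is invoking the Saint-Venant compatibility theorem for distributional symmetric tensor fields on simply connected Lipschitz domains, i.e.\ the implication $\mathrm{curl}^T\mathrm{curl}\,G=0\Rightarrow G=\mathrm{sym}\,\nabla w$ with $w\in W^{1,2}$; this is where the simple connectivity hypothesis enters, and I would appeal to a standard reference rather than reprove it.
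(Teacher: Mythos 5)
Your proposal is correct and follows essentially the same route as the paper: minimize $\|F-\mathrm{cof}\,\nabla^2 r\|_{L^2}$ over $r\in W^{2,2}_0(\omega,\R)$, use $\langle \mathrm{cof}\,A:\mathrm{cof}\,B\rangle=\langle A:B\rangle$ and Riesz representation to solve the Euler--Lagrange equation, read off $\|\mathrm{curl}^T\mathrm{curl}\,F\|_{H^{-2}}$ from the same duality identity, and conclude that the residual has vanishing $\mathrm{curl}^T\mathrm{curl}$ and is therefore a symmetric gradient. The only divergence is that you invoke the Saint--Venant compatibility theorem as a black box where the paper proves it inline by two successive applications of Poincar\'e's lemma (first writing $\mathrm{curl}\,\bar F=\nabla v=\mathrm{curl}(\mathrm{skew}_2 v)$, then integrating $\bar F+\mathrm{skew}_2 v$ to $\nabla w$); this is an acceptable substitution, and your explicit orthogonality and uniqueness arguments fill in details the paper leaves implicit.
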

\begin{proof}
Similarly to the proof of Lemma \ref{lem_dist_hes}, we consider the minimization problem:
$$\mathrm{minimize}\,\Big\{\int_{\omega} |F-\mathrm{cof}\,\nabla^2
r|^2\dxp;~ r\in W_0^{2,2}(\omega,\R)\Big\},$$
whose unique solution is given through the orthogonal projection
on the closed subspace $\{\mathrm{cof}\,\nabla^2r;~r\in
W_0^{2,2}(\omega,\R)\}$ of $L^2(\omega,\R^{2\times
  2}_{sym})$. Equivalently, the solution $r$ satisfies:
\begin{equation*}
\int_{\omega}\langle F: \mathrm{cof}\,\nabla^2\alpha\rangle\dxp = 
\int_{\omega}\langle \nabla^2r : \nabla^2\alpha\rangle\dxp 
\qquad\mbox{for all }\; \alpha\in W_0^{2,2}(\omega,\R),
\end{equation*}
and we get:
\begin{equation*}
\|\nabla r\|_{L^2}^2 = \sup\Big\{\int_{\omega}\langle  F: \mathrm{cof}\,\nabla^2\alpha\rangle\dxp;~ \alpha\in
W_0^{2,2}(\omega,\R),~~ \|\nabla^2\alpha\|_{L^2}\leq 1\Big\}
=\|\mathrm{curl}^T\mathrm{curl}\,F\|_{H^{-2}(\omega)}.
\end{equation*}
The last equality above follows by observing that for all $\alpha\in \mathcal{C}^\infty_c(\omega)$ there holds:
\begin{equation}\label{curlo}
\begin{split}
\int_{\omega}\big(\mathrm{curl}^T&\mathrm{curl}\,F\big)\alpha\dxp  =
-\int_\omega \langle \mathrm{curl}\,F, \nabla^\perp\alpha\rangle\dxp
\\ & = \int_\omega \big\langle (F_{11}, F_{12}),\nabla^\perp(-\partial_2\alpha)\big\rangle\dxp + \int_\omega
\big\langle (F_{21}, F_{22}),\nabla^\perp(\partial_1\alpha)\big\rangle\dxp 
\\ & = \int_\omega \big\langle F: \mathrm{cof}\,\nabla^2\alpha\big\rangle\dxp.
\end{split}
\end{equation}

Denoting $\bar F=F - \mathrm{cof}\,\nabla^2r\in L^2(\omega, \R^{2\times 2}_{sym})$, it thus follows that:
$$\int_{\omega}\big\langle  \bar F: \mathrm{cof}\,\nabla^2\alpha\big\rangle \dxp =
0 \qquad\mbox{ for all }\; \alpha\in W_0^{2,2}(\omega,\R).$$
As in (\ref{curlo}), we deduce that: $\mathrm{curl}^T\mathrm{curl}\,\bar F=0$ in distributions.
Hence, for some $v\in L^2(\omega,\R)$:
$$\mathrm{curl}\,\bar F = \nabla v = \mathrm{curl}\,(\mathrm{skew}_2v),\qquad\mbox{where: }\;
\mathrm{skew}_2v=\left[\begin{array}{cc} 0 & v\\-v & 0\end{array}\right].$$
Consequently: $\mathrm{curl}\big(\bar F +\mathrm{skew}_2v\big)=0$, so there
exists $w\in W^{1,2}(\omega,\R^2)$ satisfying: 
$$\bar F+ \mathrm{skew}_2v = \nabla w.$$
Since $\bar F$ is symmetric, this yields: $\bar F=\mathrm{sym}\,\nabla w$. The proof is done.
\end{proof}

\smallskip

\noindent {\bf Proof of Proposition \ref{infbd}}

%
%

{\bf 1.} For $v\in W^{2,2}(\omega,\R)$ we have: $\det\nabla^2v =
\det (\nabla^2v +\bar B) + \det\bar B -\langle \mathrm{cof}\bar B
:\nabla^2v + \bar B\rangle$, so:
\begin{equation*}
\begin{split}
\|\det \nabla^2v + \mathrm{curl}^T\mathrm{curl}\,\bar
S\|_{H^{-2}(\omega)}   \leq \; &
\|\det \bar B +\mathrm{curl}^T\mathrm{curl}\,\bar
S\|_{H^{-2}(\omega)} \\ & + \|\det (\nabla^2v +\bar B)\|_{H^{-2}(\omega)}
+\|\langle \mathrm{cof}\bar B : \nabla^2v + \bar B\rangle\|_{H^{-2}(\omega)}.
\end{split}
\end{equation*}
Observe that:
\begin{equation}\label{grgr}
\begin{split}
& \|\det (\nabla^2v +\bar B)\|_{H^{-2}(\omega)} \leq C
\|\nabla^2v+\bar B\|_{L^2(\omega)}^2, \\ 
& \|\langle \mathrm{cof}\bar B : \nabla^2v + \bar
B\rangle\|_{H^{-2}(\omega)}\leq C\|\bar B\|_{L^\infty(\omega)} \cdot\|\nabla^2v+\bar B\|_{L^2(\omega)},
\end{split}
\end{equation}
which implies:
\begin{equation*}
\begin{split}
\bar I_0(v)\leq C \|\nabla^2v +\bar B\|^2_{L^{2}(\omega)}\Big(1+
\|\bar B\|^2_{L^\infty(\omega)} +
\|\nabla^2v +\bar B\|^2_{L^{2}(\omega)}\Big)+C
\|\det \bar B +\mathrm{curl}^T\mathrm{curl}\,\bar S\|^2_{H^{-2}(\omega)}.
\end{split}
\end{equation*}
The upper bound on $\inf \bar I_0$ follows now by infimizing the
right hand side expression with respect to $v$, and applying Lemma \ref{lem_dist_hes}.

\smallskip

{\bf 2.} For the lower bound, we use (\ref{grgr}) to get:
\begin{equation*}
\begin{split}
\|\det \nabla^2v + \mathrm{curl}^T\mathrm{curl}\,\bar
S\|_{H^{-2}(\omega)}^2   \geq \; & \frac{1}{2}
\|\det \bar B +\mathrm{curl}^T\mathrm{curl}\,\bar
S\|_{H^{-2}(\omega)} \\ & -c \|\nabla^2v +\bar B\|^2_{L^{2}(\omega)}
\big(\|\bar B\|^2_{L^\infty(\omega)} + \|\nabla^2v +\bar B\|^2_{L^{2}(\omega)}\big).
\end{split}
\end{equation*}
The established upper bound yields along a minimizing sequence
$\{v_n\}_{n\to\infty}$ of $\bar I_0$:
\begin{equation*}
\begin{split}
\|\nabla^2v_{n} +\bar B\|^2_{L^{2}(\omega)} \leq & \; C
\|\mathrm{curl}\,\bar B\|_{H^{-1}(\omega)}^2\big(1+\|\bar
B\|^2_{L^\infty(\omega)} + \|\mathrm{curl}\,\bar
B\|_{H^{-1}(\omega)}^2\big) \\ & + C
\|\det \bar B +\mathrm{curl}^T\mathrm{curl}\,\bar S\|^2_{H^{-2}(\omega)},
\end{split}
\end{equation*}
so consequently, for every $\epsilon\leq 1$ there holds:
\begin{equation*}
\begin{split}
\inf \bar I_0 = & \; \lim_{n\to\infty} \bar I_0(v_n) \\ \geq & \; \|\nabla^2v_n +\bar B\|^2_{L^{2}(\omega)}
\Big(1 - \epsilon\big( \|\bar B\|^2_{L^\infty(\omega)} +
\|\det \bar B +\mathrm{curl}^T\mathrm{curl}\,\bar S\|^2_{H^{-2}(\omega)}
\\ & \qquad\qquad\qquad \qquad\qquad \qquad 
+ \|\mathrm{curl}\,\bar B\|_{H^{-1}(\omega)}^2\big(1+\|\bar
B\|^2_{L^\infty(\omega)} + \|\mathrm{curl}\,\bar
B\|_{H^{-1}(\omega)}^2\big)\big)\Big) \\ & + c\epsilon  
\|\det \bar B +\mathrm{curl}^T\mathrm{curl}\,\bar S\|^2_{H^{-2}(\omega)},
\end{split}
\end{equation*}
The result now follows by invoking  Lemma \ref{lem_dist_hes} and
taking $2\epsilon$ to be the minimum of $1$ and the inverse of the expression:
$1/a = \|\bar B\|^2_{L^\infty(\omega)} +
\|\det \bar B +\mathrm{curl}^T\mathrm{curl}\,\bar S\|^2_{H^{-2}(\omega)}
+ \|\mathrm{curl}\,\bar B\|_{H^{-1}(\omega)}^2\big(1+\|\bar
B\|^2_{L^\infty(\omega)} + \|\mathrm{curl}\,\bar
B\|_{H^{-1}(\omega)}^2\big)$. This completes the proof.
\endproof

\section{Connection to curvature}\label{sec_curv}

In this section, we compute the Riemann curvatures in (\ref{curv_1}) and (\ref{curv_2}).
Recall that the Riemannian metrics on $\Omega^h$, induced by the
prestrain tensors $A^h$, are: 
$$\G^h= (A^h)^TA^h = Id_3 + 2h^{\alpha/2}S + h^\alpha S^2 +
x_3\big(2h^{\gamma/2}B + 2 h^{(\alpha+\gamma)/2}\mbox{sym}(SB)\big) +
x_3^2h^\gamma B^2.$$
The Christoffel symbols of $\mathcal{G}^h$ are gathered in matrices  $\{\Gamma_a\}_{a=1\ldots 3}$, where:
$$\Gamma_a = [\Gamma_{ac}^b]_{b,c=1\ldots 3}, \qquad 
\Gamma_{ac}^b=\frac{1}{2}\G^{bm}\big(\partial_a\G_{mc} + \partial_c\G_{ma}
-\partial_m\G_{ac}\big).$$
Writing the Riemann curvatures as $\{R^\cdot_{\cdot,
  cd}\}_{c,d=1\ldots 3}$ and $\{R_{\cdot\cdot, cd}\}_{c,d=1\ldots 3}$, so that $R^\cdot_{\cdot,
  cd}=[R^a_{b,cd}]_{a,b=1\ldots 3}$ and $R_{\cdot \cdot,
  cd}=[R_{ab,cd}]_{a,b=1\ldots 3}$, recall that:
$$R^\cdot_{\cdot, cd}= \big(\partial_c\Gamma_d - \partial_d\Gamma_c\big) +
\big(\Gamma_c\Gamma_d-\Gamma_d\Gamma_c\big), \qquad R_{\cdot\cdot, cd}= \mathcal{G}^hR^\cdot_{\cdot,cd}$$
We now compute $R_{\cdot\cdot, cd}$ at $x_3=0$, in the two cases
studied in this paper.

\subsection{Case $\mathbf{\alpha\geq 4}$, $\mathbf{\gamma\geq 2}$}\label{sec_uno}

We directly compute for all $a,b,c=1\ldots 2$:
\begin{equation*}
\begin{split}
& \Gamma_{ac}^b = h^{\alpha/2}\big(\partial_aS_{bc}+\partial_cS_{ab}-\partial_bS_{ac}\big)
+ h^{\gamma/2}x_3\big(\partial_aB_{bc}+\partial_cB_{ab}-\partial_bB_{ac}\big) 
+ 2h^\gamma x_3 B_{b3}B_{ac}  +e, \\
& \Gamma_{ac}^3 = h^{\alpha/2}\big(\partial_aS_{c3}+\partial_cS_{a3}\big) -h^{\gamma/2}B_{ac}
+ h^{\gamma/2}x_3\big(\partial_aB_{c3}+\partial_cB_{a3}\big) 
\\ & \qquad\quad - h^\gamma x_3 \big((B^2)_{ac} - 2B_{33}B_{ac}\big) + e,\\
& \Gamma_{a3}^b = h^{\alpha/2}\big(\partial_aS_{3b}-\partial_bS_{a3}\big) +h^{\gamma/2}B_{ab}
+ h^{\gamma/2}x_3\big(\partial_aB_{b3} - \partial_bB_{a3}\big) 
\\ & \qquad\quad - h^\gamma x_3 \big((B^2)_{ab} - 2B_{b3}B_{a3}\big) + e,\\
& \Gamma_{a3}^3 = h^{\alpha/2}\partial_aS_{33} + h^{\gamma/2}\partial_aB_{33}
- h^{\alpha}x_3\big((B^2)_{a3} - B_{33}B_{a3}\big) + e,\\
& \Gamma_{33}^b = 2h^{\gamma/2}B_{b3} -h^{\alpha/2}\partial_bS_{33}
- h^{\gamma/2}x_3\partial_bB_{33}- 2h^\gamma x_3 \big((B^2)_{b3}  - B_{33}B_{b3}\big) + e, \\
& \Gamma_{33}^3 = h^{\gamma/2}B_{33}
- h^\gamma x_3 \big(3(B^2)_{33} - 2B_{33}^2\big) + e,
\end{split}
\end{equation*}
where $e$ denotes the error terms of order $\mathcal{O}\big(h^{\alpha\wedge (\alpha+\gamma)/2}+
h^{(\alpha+\gamma)/2} x_3+ h^\gamma x_3^2\big)$.

\smallskip

\textbullet $~$ Consequently, we obtain the following skew-symmetric
matrix fields at $x_3=0$, whose lowest order terms are:
\begin{equation*}
\begin{split}
& \partial_1\Gamma_2-\partial_2\Gamma_1 \simeq \left[\begin{array}{ccc}
0 & -h^{\gamma/2} \mbox{curl}^T\mbox{curl} \, S_{2\times 2} &
-h^{\alpha/2}\partial_1\mbox{curl} (S_{13}, S_{23}) + h^{\gamma/2}(\mbox{curl} B_{2\times 2})_1\\
\cdot & 0 & -h^{\alpha/2}\partial_2\mbox{curl} (S_{13}, S_{23}) + h^{\gamma/2}(\mbox{curl} B_{2\times 2})_2\\
\cdot & \cdot & 0 \end{array}\right],\\
&\Gamma_1\Gamma_2 - \Gamma_2\Gamma_1\simeq \left[\begin{array}{ccc}
0 & -h^\gamma\det B_{2\times 2} & 0\\
\cdot & 0 & 0\\ \cdot & \cdot & 0 \end{array}\right].
\end{split}
\end{equation*}
Thus, the lowest order terms of the curvatures computed at $x_3=0$ are:
\begin{equation*}
R^\cdot_{\cdot, 12}\simeq\left[\begin{array}{cc} \left[\begin{array}{cc} 0 & -
    h^{\alpha/2} \mbox{curl}^T\mbox{curl}\,S_{2\times 2} -h^\gamma \det B_{2\times 2}\big) \\
\cdot & 0\end{array}\right] &
- h^{\alpha/2}\nabla '\mbox{curl}(S_{13}, S_{23}) + h^{\gamma/2}\mbox{curl}B_{2\times 2} \\
\cdot & 0 \end{array}\right],
\end{equation*}
where we denoted: $\nabla '=(\partial_1, \partial_2)$. 

\smallskip

\textbullet $~$ For every $a=1\ldots 2$ we have, at $x_3=0$, with the same
notation as above: 
\begin{equation*}
\begin{split}
& \big(\partial_a\Gamma_3-\partial_3\Gamma_a\big)_{2\times 2} \simeq -2h^\gamma\left[\begin{array}{cc}
B_{13}B_{a1} & B_{13}B_{a2}\\ B_{23}B_{a1} & B_{23}B_{a2} \end{array}\right] +
\left[\begin{array}{cc} 0 & -h^{\alpha/2}\partial_a\mbox{curl}(S_{13}, S_{23})
+ h^{\gamma/2}\big(\mbox{curl} B_{2\times 2} \big)_a\\
\cdot & 0 \end{array}\right], \\ &
\big(\big(\partial_a\Gamma_3-\partial_3\Gamma_a\big)_{13},
\big(\partial_a\Gamma_3-\partial_3\Gamma_a\big)_{23} \big)  \simeq h^\gamma
\big(((B^2)_{a1},(B^2)_{a2})-2B_{a3}(B_{13}, B_{23})\big), \\ &
\qquad\qquad \qquad\qquad \qquad\qquad \qquad\qquad \quad -
h^{\alpha/2}\partial_a\nabla ' S_{33} + h^{\gamma/2}\big(\nabla ' B_{a3}+\partial_a(B_{13}, B_{23})\big),\\
& \big(\big(\partial_a\Gamma_3-\partial_3\Gamma_a\big)_{31},
\big(\partial_a\Gamma_3-\partial_3\Gamma_a\big)_{32} \big) \simeq h^\gamma
\big(((B^2)_{a1},(B^2)_{a2})-2B_{33}(B_{a1}, B_{a2})\big), \\ &
\qquad\qquad \qquad\qquad \qquad\qquad \qquad\qquad \quad +
h^{\alpha/2}\partial_a\nabla ' S_{33} - h^{\gamma/2}\big(\nabla '
B_{a3}+\partial_a(B_{13}, B_{23})\big)\\
& \big(\partial_a\Gamma_3-\partial_3\Gamma_a\big)_{33}
\simeq 2h^\gamma \big(B_{13}B_{a1} + B_{23}B_{a2}\big).
\end{split}
\end{equation*}
We further have:
\begin{equation*}
\begin{split}
&\big(\Gamma_a\Gamma_3 - \Gamma_3\Gamma_a\big)_{2\times 2}\simeq 
2h^\gamma\left[\begin{array}{cc} B_{13}B_{a1} & B_{13}B_{a2}\\ B_{23}B_{a1} &
B_{23}B_{a2} \end{array}\right], \\ &
\big(\big(\Gamma_a\Gamma_3-\Gamma_3\Gamma_a\big)_{13},
\big(\Gamma_a\Gamma_3-\Gamma_3\Gamma_a\big)_{23} \big) =
\big(\big(\Gamma_a\Gamma_3-\Gamma_3\Gamma_a\big)_{31},
\big(\Gamma_a\Gamma_3-\Gamma_3\Gamma_a\big)_{32} \big) \\ &
\qquad\qquad \qquad\qquad \qquad \qquad\qquad \qquad\quad 
\simeq h^\gamma\big(B_{33}(B_{a1}, B_{a2}) - B_{2\times 2} (B_{a1}, B_{a2})\big)\\ &
\big(\Gamma_a\Gamma_3-\Gamma_3\Gamma_a\big)_{33}
\simeq - 2h^\gamma \big(B_{13}B_{a1} + B_{23}B_{a2}\big).
\end{split}
\end{equation*}
We thus obtain the corresponding lowest order terms of curvatures, computed at $x_3=0$:
\begin{equation*}
\begin{split}
& R^\cdot_{\cdot, a3}\simeq \left[\begin{array}{cc} \left[\begin{array}{cc} 0 & 
    - h^{\alpha/2} \partial_a \mbox{curl}\,S_{2\times 2} +
h^{\gamma/2}\big(\mbox{curl} B_{2\times 2}\big)_a \\
\cdot & 0\end{array}\right] & v\\ \cdot & 0 \end{array}\right], \; \mbox{ where: }\;\\
& v = - h^{\alpha/2}\nabla '\partial_a S_{33} + h^{\gamma/2}\big(\nabla '
B_{a3} + \partial_a (B_{13}, B_{23})\big).
\end{split}
\end{equation*}
Finally, we observe that $R_{ab, cd}\simeq R^a_{b,cd}$ at $x_3=0$.

\subsection{Case $\mathbf{S_{2\times 2}\equiv 0}$ and $\mathbf{\alpha, \gamma\geq 2}$}
\label{sec_due}

We compute for all $a,b,c=1\ldots 2$:
\begin{equation*}
\begin{split}
& \Gamma_{ac}^b = h^{\alpha}\Big(\frac{1}{2} S_{3c}(\partial_aS_{3b}-\partial_bS_{3a}) 
+ \frac{1}{2} S_{3a}(\partial_cS_{3b}-\partial_bS_{3c}) - \frac{3}{2} S_{3b}(\partial_aS_{3c}+\partial_cS_{3a}) \Big)
+ 2h^{(\alpha+\gamma)/2} S_{b3}B_{ac} \\ & \qquad\quad
+ h^{\gamma/2} x_3\big(\partial_aB_{bc}+\partial_cB_{ab}-\partial_bB_{ac}\big)
+ 2 h^\gamma x_3 B_{b3}B_{ac} \\ & \qquad\quad
+ h^{(\alpha+\gamma)/2}x_3\Big(\partial_a\mbox{sym}(SB)_{bc}+\partial_c\mbox{sym}(SB)_{ab}
-\partial_b\mbox{sym}(SB)_{ac} \\ & \qquad\qquad \qquad\qquad \qquad - 2S_{b3}(\partial_aB_{3c}+\partial_cB_{a3}) - 
2B_{b3}(\partial_aS_{3c}+\partial_cS_{a3})\Big)  +e, \\
& \Gamma_{ac}^3 = h^{\alpha/2}\big(\partial_aS_{c3}+\partial_cS_{a3}\big) 
+ h^{\alpha}\Big(\frac{1}{2}S_{3c}\partial_aS_{33}+S_{3a}\partial_cS_{33}
-\frac{3}{2}S_{33}(\partial_aS_{3c}+\partial_cS_{3a})\Big) \\ & \qquad\quad
- h^{\gamma/2}B_{ac} + h^{(\alpha+\gamma)/2}\big(-\mbox{sym}(SB)_{ac}+B_{33}B_{ac}\big) \\ & \qquad\quad
+ h^{\gamma/2}x_3\big(\partial+aB_{3c}+\partial_cB_{3a}\big) +h^\gamma
x_3\big(-(B^2)_{ac}+B_{33}B_{ac}\big)
\\ & \qquad\quad +  h^{(\alpha+\gamma)/2}x_3\Big(\partial_a\mbox{sym}(SB)_{3c}+\partial_c\mbox{sym}(SB)_{3a}
- 2(S\partial_aB)_{3c}-2(A\partial_cB)_{3a} \\ & \qquad\qquad \qquad\qquad \qquad +2\langle (S_{31},
S_{32}),\nabla'B_{ac}\rangle - 2B_{33}(\partial_aS_{3c}+ \partial_cS_{3a})\Big)+e,\\
& \Gamma_{a3}^b = h^{\alpha/2}\big(\partial_aS_{3b}-\partial_bS_{a3}\big) +h^{\gamma/2}B_{ab}
\\ & \qquad\quad + h^\alpha \Big( \frac{1}{2}S_{33}(\partial_aS_{b3}-\partial_bS_{a3}) -
\frac{3}{2}S_{b3}\partial_aS_{33}-\frac{1}{2}S_{a3}\partial_bS_{33}\Big)
+ h^{(\alpha+\gamma)/2}\mbox{sym}(SB)_{ab} \\ & \qquad\quad
+ h^{\gamma/2}x_3\big(\partial_aB_{b3} - \partial_bB_{a3}\big) 
- h^\gamma x_3 \big((B^2)_{ab} - 2B_{b3}B_{a3}\big) \\ & \qquad\quad + 
h^{(\alpha+\gamma)/2}x_3\Big(\partial_a\mbox{sym}(SB)_{b3}-\partial_b\mbox{sym}(SB)_{a3} 
\\ & \qquad\qquad \qquad\qquad \qquad  - 2(S\partial_aB)_{b3}-2B_{b3}\partial_aS_{33}
+2\langle (B_{b1}, B_{b2}),\nabla' S_{a3}\rangle\Big) +e, \\
& \Gamma_{a3}^3 = h^{\alpha/2}\partial_aS_{33} + h^\alpha \Big(
\frac{1}{2}((\partial_aS)S)_{33}-\frac{3}{2}(S\partial_aS)_{33} +2 \langle (S_{31}, S_{32}), \partial' S_{a3}\rangle\Big)
\\ & \qquad\quad
+ 2h^{(\alpha+\gamma)/2}\big(-(SB)_{3a}+S_{33}B_{3a}\big) \\ & \qquad\quad 
+ h^{\gamma/2}x_3\partial_aB_{33}+2h^{\gamma}x_3 \big(-(B^2)_{3a}+B_{33}B_{3a}\big)
\\ & \qquad\quad + h^{(\alpha+\gamma)/2}x_3\Big(\partial_a\mbox{sym}(SB)_{33}
- 2(S\partial_aB)_{33} \\ & \qquad\qquad \qquad\qquad \qquad  - 2(B\partial_aS)_{33} + 2 \langle (S_{31},
S_{32}), \nabla' B_{a3}\rangle + \langle (B_{31}, B_{32}),\nabla' S_{a3}\rangle\Big) + e,
\end{split}
\end{equation*}
\begin{equation*}
\begin{split}
& \Gamma_{33}^b = 2h^{\gamma/2}B_{b3} -h^{\alpha/2}\partial_bS_{33}
-\frac{1}{2}h^\alpha \partial_b(S^2)_{33} + 2h^{(\alpha+\gamma)/2}\big(\mathrm{sym}(SB)_{b3}-S_{b3}B_{33}\big)
\\ & \qquad\quad - h^{\gamma/2}x_3\partial_bB_{33}- 2h^\gamma x_3 \big((B^2)_{b3}  - B_{33}B_{b3}\big) 
\\ & \qquad\quad + h^{(\alpha+\gamma)/2}x_3\Big(-\partial_b\mbox{sym}(SB)_{33}
+ \langle (B_{31}, B_{32}),\nabla' S_{33}\rangle\Big) + e, \\
& \Gamma_{33}^3 = h^{\gamma/2}B_{33} + 2h^\alpha\langle (S_{31},S_{32}),\nabla' S_{33}\rangle 
+ h^{(\alpha+\gamma)/2}\Big(\mbox{sym}(SB)_{33} - 4(SB)_{33} + 2S_{33}B_{33}\Big) 
\\ & \qquad\quad + h^\gamma x_3 \big(-3(B^2)_{33} - 2B_{33}^2\big) 
\\ & \qquad\quad + 2h^{(\alpha+\gamma)/2}x_3\Big(\langle (S_{31},
S_{32}), \nabla' B_{33}\rangle + \langle (B_{31}, B_{32}),\nabla' S_{33}\rangle\Big) + e,
\end{split}
\end{equation*}
where $e$ denotes the error terms of order:
$\mathcal{O}\big(h^{\alpha+(\alpha\wedge \gamma)/2}+
h^{(\alpha+\gamma)/2 + (\alpha\wedge\gamma)/2} x_3+ h^\gamma x_3^2\big)$.

\smallskip

\textbullet $~$ Proceeding as in subsection \ref{sec_uno}, we obtain
the following expressions for the lowest order terms of tangental
curvatures at $x_3=0$, where we denote $s=(S_{31}, S_{32})$:
\begin{equation*}
\begin{split}
& R_{12,12} \simeq  \; h^\alpha \big(-3 \det\nabla' s
+ \big\langle\nabla'\,\mbox{curl}\, s, s^\perp\big\rangle\big) 
- h^\gamma\det B_{2\times 2} + 2h^{(\alpha+\gamma)/2} \big\langle
B_{2\times 2}: \mbox{cof}\,\nabla' s\big\rangle,\\ &
(R_{13,12}, R_{23,12})  \simeq  -h^{\alpha/2}\nabla' \mbox{curl}\, s+ h^{\gamma/2} \mbox{curl}B_{2\times 2}.
\end{split}
\end{equation*}
It is also instructive to directly check that:
\begin{equation*}
\begin{split}
4\det\big(\mathrm{sym}\,\nabla' s\big) +
\frac{1}{2}\mathrm{curl}^T\mathrm{curl}(S_{31}, S_{32})^{\otimes
  2} = 3\det\nabla' s-\big\langle \nabla'\mathrm{curl}\,s, s^\perp\big\rangle,
\end{split}
\end{equation*}
which justifies the equivalence of the two conditions in
Theorem \ref{th_main150} (i) when $\alpha=\gamma=2$.

\smallskip

\textbullet $~$ The lowest order terms of the remaining curvatures are contained in the following skew-symmetric
matrix field at $x_3=0$, where $a=1\ldots 2$:
\begin{equation*}
\begin{split}
& R_{\cdot\cdot, a3}\simeq \left[\begin{array}{cc} \left[\begin{array}{cc} 0 & 
    - h^{\alpha/2} \partial_a \mbox{curl}(S_{31}, S_{32}) +
h^{\gamma/2}\big(\mbox{curl} B_{2\times 2}\big)_a \\
\cdot & 0\end{array}\right] & v\\ \cdot & 0 \end{array}\right], \; \mbox{ where: }\;\\
& v = - h^{\alpha/2}\nabla '\partial_a S_{33} + h^{\gamma/2}\big(\nabla '
B_{a3} + \partial_a (B_{13}, B_{23})\big).
\end{split}
\end{equation*}
We observe that the non-tangential curvatures above are the same as in
subsection \ref{sec_uno}.

\end{document}